\documentclass[leqno,11pt]{amsart}
\usepackage{amsmath,amssymb}
\usepackage{color}
\usepackage{comment}
\usepackage{todonotes}

\allowdisplaybreaks

\theoremstyle{plain}
\newtheorem{theorem}{Theorem}[section]
\newtheorem{lemma}[theorem]{Lemma}
\newtheorem{proposition}[theorem]{Proposition}

\newtheorem*{theorem-A}{Theorem A}
\newtheorem*{theorem-B}{Theorem B}
\newtheorem*{theorem-C}{Theorem C}
\newtheorem*{theorem-D}{Theorem D}
\newtheorem*{theorem-E}{Theorem E}
\newtheorem*{theorem-F}{Theorem F}
\theoremstyle{definition}

\newtheorem{remark}[theorem]{Remark}

\newtheorem{example}[theorem]{Example}

\newtheorem{lemmaalph}{Lemma}

\def\rn{\mathbb R\sp n}

\def\R{\mathbb R}
\def\N{\mathbb N}
\def\S{\mathbb S}

\def\M{\mathcal M}

\def\limsup{\operatornamewithlimits{lim\,sup}}

%  \providecommand{\dashint}{\mathop{\Xint-}}

%%%%%%%%% Bibliography
\newtoks\by
\newtoks\paper
\newtoks\book
\newtoks\jour
\newtoks\yr
\newtoks\pages
\newtoks\vol
\newtoks\publ
\newtoks\eds
\newtoks\proc
\newtoks\no
\def\ota{{\hbox{???}}}
\def\cLear{\by=\ota\paper=\ota\book=\ota\jour=\ota\yr=\ota
\pages=\ota\vol=\ota\publ=\ota}
\def\endpaper{\the\by, \textit{\the\paper},
{\the\jour} \textbf{\the\vol} (\the\yr), \the\pages.\cLear}
\def\endbook{\the\by, \textit{\the\book}, \the\publ.\cLear}
\def\endprep{\the\by, \textit{\the\paper}, \the\jour.\cLear}
\def\endproc{\the\by, \textit{\the\paper}, \the\publ, \the\pages.\cLear}
\def\name#1#2{#1 #2}
\def\et{ and }
%%%%%%%%%%%%%%%%%%%%%%%%%%%%%%%%%%%%%%%%%%%%%%%%%%%%%

\setlength{\oddsidemargin}{-1cm}
\setlength{\evensidemargin}{-1cm}
 \setlength{\textwidth}{18.5cm} \setlength{\textheight}{23cm}
\setlength{\voffset}{-1cm}

\numberwithin{equation}{section}

\hyphenation{re-ar-ran-ge-ment-in-va-ri-ant}

\newcommand{\norm}[1]{{\left\vert\kern-0.25ex\left\vert\kern-0.25ex\left\vert #1
    \right\vert\kern-0.25ex\right\vert\kern-0.25ex\right\vert}}

\begin{document}

%\date{\today}

\title[Boundedness of fractional Orlicz-Sobolev functions]{Boundedness of functions in fractional Orlicz-Sobolev spaces }

\author {Angela Alberico, Andrea Cianchi, Lubo\v s Pick and Lenka Slav\'ikov\'a}

\address{Angela Alberico, Istituto per le Applicazioni del Calcolo ``M. Picone''\\
Consiglio Nazionale delle Ricerche \\
Via Pietro Castellino 111\\
80131 Napoli\\
Italy} \email{a.alberico@na.iac.cnr.it}

\address{Andrea Cianchi, Dipartimento di Matematica e Informatica \lq\lq U. Dini"\\
Universit\`a di Firenze\\
Viale Morgagni 67/a\\
50134 Firenze\\
Italy} \email{andrea.cianchi@unifi.it}

\address{Lubo\v s Pick, Department of Mathematical Analysis\\
Faculty of Mathematics and Physics\\
Charles University\\
Sokolovsk\'a~83\\
186~75 Praha~8\\
Czech Republic} \email{pick@karlin.mff.cuni.cz}

\address{Lenka Slav\'ikov\'a,
Department of Mathematical Analysis, Faculty of Mathematics and
Physics,  Charles University, Sokolovsk\'a~83,
186~75 Praha~8, Czech Republic}
\email{slavikova@karlin.mff.cuni.cz}
\urladdr{}

\subjclass[2000]{46E35, 46E30}
\keywords{Fractional Orlicz--Sobolev spaces;  boundedness of functions; smooth approximation;   Orlicz spaces; Orlicz-Lorentz spaces;  rearrangement-invariant spaces}

\begin{abstract} A necessary and sufficient  condition
% growth condition is determined on the  Young function underlying the definition of
for  fractional Orlicz-Sobolev spaces  to be continuously embedded into $L^\infty(\rn)$   is exhibited. Under the same assumption, any function from the relevant fractional-order spaces is shown to be continuous.  Improvements of this result are also offered.  They provide  the optimal Orlicz target space, and the optimal rearrangement-invariant target space in the embedding in question. These results complement those already available in the subcritical case,  where the embedding into $L^\infty(\rn)$ fails. They also augment   a classical embedding theorem for  standard fractional Sobolev spaces.
\end{abstract}

%\date{\today}

\maketitle

\section{Introduction }\label{sec1}

Fractional-order Orlicz-Sobolev spaces are associated with a positive non-integer smoothness parameter $s$, and with a Young function $A$ which  dictates a degree of  integrability. They generalize the   Gagliardo-Slobodeckii fractional Sobolev spaces, independently introduced in \cite{Gagliardo} and \cite{Slobodeckij}, and are defined in terms of a Luxemburg type seminorm
$|\,\cdot \,|_{s,A,\rn}$.

When $s\in (0,1)$, the seminorm $|\,\cdot \,|_{s,A,\rn}$ is built upon the functional defined as
\begin{equation}\label{intro1}
J_{s,A}(u) = \int_{\rn} \int_{\rn}A\left(\frac{|u(x)-u(y)|}{|x-y|^s}\right)\frac{\,dx\,dy}{|x-y|^n}
\end{equation}
for a measurable function $u$ in $\rn$.
The space $V_d^{s,A}(\rn)$ of those functions $u$ decaying near infinity in a weakest possible sense,
and
 such that $|u|_{s,A,\rn}<\infty$  will be considered. If $s\in (1,\infty)\setminus \N$, then the space $V_d^{s,A}(\rn)$  consists of all functions $u$, whose weak derivatives up to the order $[s]$ decay near infinity,   for which
$\big|\nabla ^{[s]}u\big|_{\{s\},A,\rn}<\infty$. Here, $\nabla ^{[s]}u$ denotes the vector of all weak derivatives of $u$  of order $[s]$,  the integer part of $s$, and $\{s\}= s-[s]$, the fractional part of $s$. In particular, $\nabla ^{[s]}u=\nabla ^0u=u$ if $s\in (0,1)$.
Precise definitions can be found in Section \ref{back}, where the necessary background is collected.  Customary fractional Sobolev spaces, which will be denoted by $V_d^{s,p}(\rn)$, are recovered with the choice $A(t)=t^p$, with $p\geq 1$.

As a part of a wealth of
 investigations on linear and nonlinear nonlocal equations of elliptic and parabolic type,   the analysis of nonlocal  problems driven by  possibly non-polynomial type nonlinearities has recently started attracting the attention of researchers   \cite{BBX, BaS, KKPF, SCdAB}.
 %Fractional-order Orlicz-Sobolev spaces provide a natural  functional framework for their solutions.

A sound theory of fractional Orlicz-Sobolev spaces --  a natural  functional framework for   these problems --
is of course crucial in connection with their study.  Properties of fractional Orlicz-Sobolev spaces are the subject of the contributions \cite{ACPS_lim0, ACPS_limit1, ACPS_frac, ACPS_campanato, Bon:19, BreitCia,  BonderSalort, KKPF}. They provide extensions of some aspects of the theory of the classical fractional Sobolev spaces, which has been developed over the years --  see e.g.  \cite{BFV, BBM, BBM_2002,BouPonVanSch, BraCin, BraSal, BrezisMiro1, BrezisMiro2, BrezisNguyen, CW, CFW, CostadeFigYang, DPV, DydaFrank, DydaVaha, FiFuMaMiMo, FMT, FJX, FrankSeiringer, Heuer, Liang, Ludwig, Mallick, MaranoMosco, MaSh2, MaSh1, MuNa1, PaPi, PaRu, SeegerTrebels, Tz, Zhou}.

In particular, as in the case of integer-order spaces,  embedding theorems  are a central issue in the fractional-order case.
Optimal   embeddings for the fractional Orlicz-Sobolev  space $V_d^{s,A}(\rn)$,  of the form
\begin{equation}\label{general}
V_d^{s,A}(\rn) \to Y(\rn),
\end{equation}
where $Y(\rn)$ is an Orlicz space, or, more generally, a   rearrangement-invariant space, were established in \cite{ACPS_frac}. Here, and in what follows, the arrow $\lq \lq \rightarrow "$ stands for continuous embedding.  The results of \cite{ACPS_frac} deal with every
\begin{equation}\label{s}
s\in (0,n)\setminus \N,
\end{equation}
 in the \lq\lq subcritical" growth regime for $A$ near infinity dictated by the condition
\begin{equation}\label{subcrit}
 \int ^\infty \left (\frac {t}{A(t)}\right)^{\frac s{n-s}} \; dt =\infty.
\end{equation}
Note that,   when $A(t)=t^p$ near infinity, condition \eqref{subcrit} amounts to assuming that $1\leq p \leq \frac ns$.

The present contribution is focused on the validity of embeddings \eqref{general} in the complementary \lq\lq supercritical" growth dominion, corresponding to  orders of smoothness $s$ satisfying  \eqref{s}  and Young functions growing so fast near infinity that
\begin{equation}\label{supercrit}
 \int ^\infty \left (\frac {t}{A(t)}\right)^{\frac s{n-s}} \; dt <\infty.
\end{equation}

We emphasize that  restriction \eqref{s}
 is indispensable when embeddings of the form \eqref{general} are in question. In fact,
our discussion  begins by  showing  that, whatever the rearrangement-invariant target space $Y(\rn)$ is, embedding  \eqref{general} can only hold provided that  $s$ is as in \eqref{s}
and $A$ decays so slowly  near zero that
\begin{equation}\label{indisp}
 \int_0\left (\frac {t}{A(t)}\right)^{\frac s{n-s}} \; dt <\infty.
\end{equation}
These assumptions will thus be kept in force throughout.

The core of this paper is a result
 asserting that
condition \eqref{supercrit} is  necessary and sufficient for the space $V_d^{s,A}(\rn)$ to be   embedded into $L^\infty(\rn)$, namely for \eqref{general} to hold with $Y(\rn)=L^\infty(\rn)$. The same condition  also turns out to be equivalent to the embedding of $V_d^{s,A}(\rn)$ into the space of continuous functions. In particular, these conclusions provide us with an embedding into $L^\infty(\rn)$ in case of Young functions which behave like $t^p$ near infinity, with $p>\frac ns$. By contrast, no embedding as in \eqref{general} can hold for these values of $p$ if $A(t)=t^p$ globally, namely for the standard homogeneous space $V_d^{s,p}(\rn)$, because of the failure of condition \eqref{indisp}.
Such an embedding is classically restored, provided that the homogeneous space is replaced by its non-homogeneous version, which consists of those functions in $V_d^{s,p}(\rn)$ all of whose derivatives up to the order $[s]$ belong to $L^p(\rn)$.

 Interestingly, condition \eqref{supercrit}  exactly matches an analogous necessary and sufficient condition for embeddings into $L^\infty(\rn)$ of integer-order Orlicz-Sobolev spaces (see \cite{Cal,cianchi_ASNS, Mabook, Talenti} for first-order spaces, and \cite{cianchi_forum} for the higher-order case), which is  reproduced by just setting $s$ equal to the order of the latter spaces in \eqref{supercrit}.

Although fundamental, these   conclusions merely provide information on \lq\lq local" properties of functions in $V_d^{s,A}(\rn)$. Since these functions are defined on the entire Euclidean space $\rn$ -- a domain with infinite Lebesgue measure --  their integrability properties \lq\lq near infinity" are also relevant. With this regard, results parallel to those obtained under assumption \eqref{subcrit} in \cite{ACPS_frac}  are also offered. They provide us with the optimal Orlicz target space,  and the optimal rearrangement-invariant target space  for embedding \eqref{general} to hold when condition \eqref{supercrit} is current.
%Importantly, these embeddings
%enhance the standard embedding $V_d^{s,p}(\rn)\to L^\infty(\rn)$, for $p>\frac ns$ recalled above, in that the target space is replaced by a strictly smaller rearrangement-invariant space.

The results outlined  so far are weaved with a general reduction principle  for embedding \eqref{general}, of independent interest, which is also established. This principle applies irrespective of whether \eqref{subcrit} or \eqref{supercrit} holds, and informs us about  the equivalence of any embedding of this kind to a one-dimensional Hardy type inequality.

The approach to embeddings \eqref{general} exploited in \cite{ACPS_frac} under assumption \eqref{subcrit} relies upon an extension argument to a half-space in $\mathbb R^{n+1}$.  It enables one to derive a Hardy type inequality, from which  subcritical embeddings for $V_d^{s,A}(\rn)$ into optimal Orlicz and rearrangement-invariant spaces follow. This method
does not seem to be adaptable   to deduce optimal conclusions  when the opposite condition  \eqref{supercrit} is in force.

We have instead to resort to a strategy which consists in deriving optimal supercritical embeddings from subcritical ones. This  technique  can be developed thanks to embeddings available for Orlicz-Sobolev spaces built upon arbitrary subcritical  Young functions, those of power type not been sufficient. In particular, unlike the usual argument exploited for classical non-homogeneous  fractional Sobolev spaces, such an approach
 avoids embeddings into  Campanato type spaces as an intermediate step. This is a critical point, since, although  sharp embeddings of this type can be obtained \cite{ACPS_campanato}, their use does not yield the optimal criterion for embeddings into $L^\infty(\rn)$ mentioned above.

\section{Background}\label{back}

Here, we recall  basic definitions and classical properties concerning  the function spaces involved in our discussion, as well as   fractional Orlicz-Sobolev embeddings in the subcritical  setting.

\subsection{Orlicz spaces and rearrangement-invariant   spaces}\label{rispaces}

 A  function  $A: [0,
\infty ) \to [0, \infty ]$ is called a \emph{Young function} if it is convex, non-constant, left-continuous and $A(0)=0$. Any function enjoying these properties admits the representation
\begin{equation}\label{young}
A(t) = \int _0^t a(\tau ) d\tau \quad \text{for $t \geq 0$}
\end{equation}
for some non-decreasing, left-continuous function $a: [0, \infty )
\to [0, \infty ]$ which is neither identically equal to $0$ nor to
$\infty$.
%Note that
%\begin{equation}\label{aA}
%t/2 \, a(t/2) \leq A(t) \leq t\, a(t) \quad \text{for $t\geq 0\,.$}
%\end{equation}
One has that
\begin{equation}\label{kt}
kA(t) \leq A(kt) \quad \text{for $k \geq 1$ and $t \geq 0$.}
\end{equation}
%A function $A$ is said to satisfy the $\Delta_2$--condition  if there exists a constant $c$ such that
%\begin{equation}\label{delta2}
%A(2t) \leq c A(t) \quad \text{for $t \geq 0$.}
%\end{equation}
The \emph{Young conjugate} $\widetilde{A}$ of $A$  is the Young function obeying
\begin{equation}\label{Atilde}
\widetilde{A}(t) = \sup \{\tau t-A(\tau ):\,\tau \geq 0\}  \quad \text{for $t\geq 0$.}
\end{equation}
On denoting by $a^{-1}$  the left-continuous (generalized) inverse of the
function $a$ appearing in \eqref{young}, the following formula holds:
\begin{equation}\label{youngconj}
\widetilde A(t) = \int _0^t a^{-1}(\tau ) d\tau \quad \text{for $t \geq 0$.}
\end{equation}
%One can show that
%\begin{equation}\label{AAtilde}
%t \leq A^{-1}(t) \widetilde A^{-1}(t) \leq 2\, t \quad \text{for $t\geq 0$,}
%\end{equation}
%where $A^{-1}$ and $\widetilde A^{-1}$ stand for the generalized
%right-continuous inverses of $A$ and $\widetilde A$, respectively.
%\par\noindent
%A Young function $A$ is said to belong to the class $\Delta _2$
%globally if there exists a positive constant $C$ such that
%\begin{equation}\label{delta2}
%A(2t) \leq C A(t)
%\end{equation}
%for $t \geq 0$. If there exists $t _0 >0$ such that inequality
%\eqref{delta2} just holds for  $t \geq t_0$, then we say that $A \in
%\Delta _2$ near infinity. The function $A$ is said to belong to the
%class $\nabla _2$ globally if there exists a  constant $K>1$ such
%that
%\begin{equation}\label{nabla2}
%A(Kt) \geq 2K A(t)
%\end{equation}
%for $t \geq 0$. Membership of $A$ to $\nabla _2$ near infinity is
%defined accordingly. One has that
%\begin{equation}\label{deltanabla}
%A \in \Delta _2 \quad \hbox{[near infinity]} \quad \hbox{if and only
%if} \quad \widetilde A \in \nabla _2  \quad \hbox{[near infinity]}.
%\end{equation}
 A  Young function $A$ is said to \emph{dominate} another Young function $B$ \emph{globally} [resp. \emph{near zero}] [resp. \emph{near infinity}]
   if there exist positive
 constants $c$ and $t_0$ such that
\begin{equation}\label{B.5bis}
B(t)\leq A(c t) \quad \text{for $ t\geq 0$ \, [for $0\leq  t\leq t_0$] \, [for $t\geq t_0$]}.
\end{equation}
%The function $A$ is said to dominate $B$ near infinity[resp. near zero] if there
%exists $t_0> 0$ such that \eqref{B.5bis} holds for $t \geq t_0$ [$t \leq t_0$].
The functions $A$ and $B$ are called \emph{equivalent  globally}, or \emph{near zero}, or \emph{near infinity},   if they dominate each other in the respective range of values of their argument.
\\ We shall write
$B\lesssim A$
to denote that $A$ dominates $B$, and
$A \simeq B$ to denote that $A$ is equivalent to $B$.
\\ By contrast, the relation $\lq\lq \approx "$ between two expressions will be used to denote that they are bounded by each other, up to positive multiplicative constants depending on appropriate  quantities.
\\
%The function $B$ is said to grow essentially more slowly near infinity than $A$ if
%\begin{equation}\label{nov 110}
%\lim _{t \to \infty} \frac{B(\lambda t)}{A(t)} =0
%\end{equation}
%for every $\lambda >0$. Note that condition \eqref{nov 110} is equivalent to
%\begin{equation}\label{nov 110bis}
%\lim _{t \to \infty} \frac{A^{-1}(t)}{B^{-1}(t)} =0.
%\end{equation}
%\\
The decay near $0$ of a Young function $A$ can be compared with that of a power function via its \emph{Matuszewska-Orlicz index at zero}, defined as
%
%
%Given a Young function $A$, we define the function $h_A^\infty\colon (0, \infty) \to [0, \infty)$ as
%\begin{equation*}
%	h_A^\infty (t) = \sup_{s>0} \frac{A^{-1}(st)}{A^{-1}(s)}
%    \quad \text{for $t>0$.}
%\end{equation*}
%The global lower and upper Boyd indices of $A$ are then defined as
%\begin{equation} \label{BIdef}
%    i_A^\infty = \sup_{1<t<\infty} \frac{\log t}{\log h_A^\infty(t)}
%        \quad\text{and}\quad
%    I_A^\infty	= \inf_{0<t<1} \frac{\log t}{\log h_A^\infty(t)}\,,
%\end{equation}
%respectively. One has that
%\begin{equation} \label{BIprop1}
%    1\le i_A^\infty \le I_A^\infty \le \infty.
%\end{equation}
%It can also be shown that
%\begin{equation} \label{BIprop2}
%    i_A^\infty	= \lim_{t\to\infty} \frac{\log t}{\log h_A^\infty(t)}
%        \quad\text{and}\quad
%    I_A^\infty	= \lim_{t\to 0^+} \frac{\log t}{\log h_A^\infty(t)}.
%\end{equation}
\begin{equation}\label{index}
   I_0(A) 	= \lim_{\lambda \to 0^+} \frac{\log {\color{black}\lambda}}{\log \Big(\limsup_{t\to0^+} \frac{A^{-1}(\lambda t)}{A^{-1}(t)} \Big)}.
\end{equation}
If $A$ vanishes only at $0$, then the following alternative expression for $I_0(A)$ holds:
\begin{equation}\label{indexbis}
I_0(A) = \lim_{\lambda \to \infty} \frac{\log \Big(\limsup _{t\to 0^+}\frac{A(\lambda t)}{A(t)}\Big)}{\log \lambda}.
\end{equation}
%
%.
%Recall that the upper Matuszewska-Orlicz  index $I(A)$ of a finite-valued Young function $A$ is defined as
%\begin{equation}\label{index}
%I(A) = \lim_{\lambda \to \infty} \frac{\log \Big(\sup _{t>0}\frac{A(\lambda t)}{A(t)}\Big)}{\log \lambda}.
%\end{equation}
%The Matuszewska-Orlicz index  $I_\infty (A)$  of $A$ near infinity  is defined analogously, with
%$\sup _{t>0}\frac{A(\lambda t)}{A(t)}$ replaced by $\limsup _{t \to \infty}\frac{A(\lambda t)}{A(t)}$.
%\par
%Let $\Omega$ be  a measurable subset of $\rn$, with $n\geq 1$.
\par Let us set
\begin{equation}\label{M}
\mathcal{M}(\rn)=\{u:\rn \to \R : \text{$u$ is  measurable}\},
\end{equation}
%and
%\begin{equation}\label{M+}
%\mathcal{M}_+(\rn)=\{ u\in \mathcal{M}(\rn) : u \geq 0\}\,.
%\end{equation}
%The notation
and define
$\mathcal{M}_d(\rn)$ as the subset of $\mathcal{M}(\rn)$ of those functions $u$ that decay near infinity, in the sense that all their level sets $\{|u|>t\}$ have finite Lebesgue measure for $t>0$. Namely,
\begin{equation}\label{Md}
\mathcal{M}_d(\rn)=\{ u\in \mathcal{M}(\rn) : |\{|u|>t\}|<\infty\,\, \text{for every $t>0$}\}\,,
\end{equation}
where $|E|$ stands for the Lebesgue measure of a set $E\subset \rn$.
%Of course, $\mathcal{M}_d(\Omega)= \mathcal{M}(\Omega)$ provided that $|\Omega|<\infty$.
\\
The \emph{Orlicz space} $L^A (\rn )$, built upon a Young function
$A$,   is the Banach
space of those  functions $u\in \mathcal M(\rn)$ making the
 \emph{Luxemburg norm}
\begin{equation}\label{lux}
 \|u\|_{L^A(\rn )}= \inf \left\{ \lambda >0 :  \int_{\rn }A
\left( \frac{|u|}{\lambda} \right) dx \leq 1 \right\}\,
\end{equation}
finite. In particular, $L^A (\rn )= L^p (\rn )$ if $A(t)=
t^p$ for some $p \in [1, \infty )$, and $L^A (\rn )= L^\infty
(\rn )$ if $A(t)=0$ for $t\in [0, 1]$ and $A(t) = \infty$ for
$t\in (1, \infty)$.
%When convenient for specific choices of $A$, we shall also adopt the notation $A(L)(\rn)$ to denote the Orlicz space $L^A(\rn)$.
\par\noindent
A version of  H\"older's inequality in Orlicz spaces tells us that
\begin{equation}\label{holder}
\int _{\rn} |u v|\,dx \leq 2\|u\|_{L^A(\rn )}
\|v\|_{L^{\widetilde A}(\rn )}
\end{equation}
 for every $u \in L^A(\rn )$ and $v\in L^{\widetilde
A}(\rn )$. Moreover,
\begin{equation}\label{holderconv}
\|v\|_{L^{\widetilde A}(\rn )} \leq \sup _{u\in L^A(\rn )} \frac{ \int _{\rn} |u v|\,dx}{\|u\|_{L^A(\rn )}} .
\end{equation}
If $A$ dominates $B$ globally, then
\begin{equation}\label{normineq}
\|u \|_{L^B(\rn )} \leq c \|u \|_{L^A(\rn )}
\end{equation}
for every $u \in L^A(\rn )$, where $c$ is the  constant appearing in inequality
\eqref{B.5bis} (with $t_0=0$).
%If $|\Omega|<\infty$   and $A$ dominates $B$ near infinity, then inequality
%\eqref{normineq} continues to hold for some constant $C$ depending also on $A$, $B$ and $|\Omega|$.
Thus, if $A$ is  equivalent to $B$ globally, then $L^A(\rn)= L^B(\rn)$, up to equivalent norms.
% The same is true even if $A$ and $B$ are just equivalent near infinity, provided that $|\o|<\infty$.
\par The Orlicz-Lorentz  spaces provide us with  a family of function spaces  which generalizes  the Orlicz spaces. Given  a Young function $A$ and a number $q\in \R$, we denote by
 $L(A,q)(\rn)$ the \emph{Orlicz-Lorentz space}  of those functions $u \in \mathcal M(\rn)$ for which the quantity
\begin{equation}\label{aug300}
	\|u\|_{L(A, q)(\rn)}
		= \big\|r^{-\frac{1}{q}}u^{*}(r)\big\|_{L^A(0,\infty)}
\end{equation}
is finite. Under suitable assumptions on $A$ and $q$, this quantity is a norm,  which renders  $L(A,q)(\rn)$  a (non-trivial) Banach space. This is the case, for instance, if  $q>1$ and
\begin{equation}\label{aug310}
\int^\infty \frac{A(t)}{t^{1+q}}\, dt < \infty\,,
\end{equation}
see \cite[Proposition 2.1]{cianchi-ibero}.

\par The Orlicz spaces and the Orlicz-Lorentz  spaces are special instances  of rearrangement-invariant spaces, whose definition rests upon that of decreasing
rearrangement. Recall that the
 \emph{decreasing
rearrangement} $u^{\ast}$ of a function $u\in \mathcal M (\rn)$ is given by
\begin{equation}\label{u*}
u^{\ast}(r) = \inf \{t\geq 0: |\{x\in \rn: |u(x)|>t \}|\leq r \} \quad \text{for $r\geq 0$.}
\end{equation}
In other words, $u^{\ast}$
is the (unique) non-increasing,
right-continuous function from $[0,\infty)$ into $[0,\infty]$
which is equidistributed with $u$.

%\iffalse
%
%Moreover, we define the function $u^{\ast \ast}: (0, \infty)  \to [0, \infty]$ as
%\begin{equation}\label{u**}
%u^{\ast \ast}(r) =
%\frac{1}{r}\int_{0}^{r}u^{\ast}(\varrho) d\varrho \quad  \text{for $r> 0$.}
%\end{equation}
%Notice that $u^* \leq u^{\ast \ast}$.
% The
%\emph{Hardy-Littlewood inequality} states that
%\begin{equation}\label{B.0}
%\int_{\rn}|uv| \,dx \leq \int_{0}^{\infty}u^{\ast}v^{\ast}\,dr
%\end{equation}
%for all functions $u, v \in \mathcal M(\rn)$. As a consequence, one also has that
%\begin{equation}\label{HLA}
%\int_{\rn}A(|uv|) \,dx \leq \int_{0}^{\infty}A(u^{\ast}v^{\ast})\,dr
%\end{equation}
%for every Young function $A$.
%
%\fi

 \par\noindent A \emph{rearrangement-invariant space} is a
Banach function space $X(\rn)$, in the sense of Luxemburg \cite[Chapter 1, Section 1]{BS},   such that
%\todo[inline]{A: I would go for rearrangement-invariant everywhere, as we did in our recent papers}
%\todo[inline]{Angela: OK Andrea! Done!}
\begin{equation}\label{B.1}
 \|u\|_{X(\rn)} = \|v \|_{X(\rn)} \quad \text{whenever $u^*=v^*$.}
 \end{equation}

\par\noindent
The \emph{representation space} $\overline{X}(0,\infty)$ of a rearrangement-invariant space $X(\rn)$
is defined as the unique rearrangement-invariant space on $(0,\infty)$ such that
\begin{equation}\label{B.3}
\|u \|_{X(\rn)} = \|u^{\ast} \|_{\overline{X}(0,\infty)}
\end{equation}
for every $u\in X(\rn)$.
\\ A basic property  tells us that, if $X(\rn)$ and $Y(\rn)$ are rearrangement-invariant spaces, then
\begin{equation}\label{inclusion-embedding}
X(\rn) \subset Y(\rn) \quad \text{if and only if} \quad X(\rn) \to Y(\rn).
\end{equation}

\subsection{Fractional Orlicz-Sobolev spaces}\label{fracorlicz}

%Assume  that $\Omega$ is an open subset of $\rn$.
Given $m \in \N$ and a  Young function $A$, we
denote by $V^{m,A}(\rn )$ the integer-order \emph{homogeneous Orlicz-Sobolev space} given by
\begin{equation}\label{homorliczsobolev}
V^{m,A}(\rn ) = \{ u\in \mathcal M(\rn):  \text{$u$ is   $m$-times weakly differentiable in $\rn$ and   $|\nabla ^m u| \in L^A(\rn)$}\}.
\end{equation}
The functional
$$\|\nabla ^m u\|_{L^A(\rn)}$$
defines a seminorm on the space $V^{m,A}(\rn )$.
%Here, $\nabla ^m u$ denotes the vector of all weak derivatives of $u$ of order $m$. If $m=1$, we also simply write $\nabla u$ instead of $\nabla^1 u$.
%Moreover, $W^{m,1}_{\rm loc} (\rn)$ is the standard Sobolev space of $m$-times weakly differentiable functions.
%The notation
%$W^{m,A}(\rn )$  is adopted for the classical Orlicz-Sobolev space defined by
%\begin{equation}\label{orliczsobolev}
%W^{m,A}(\rn ) = \{u \in V^{m,A}(\rn): |\nabla ^k u| \in L^A(\rn), \, k=0,1, \dots, m-1\}\,,
%\end{equation}
%where $\nabla ^0u$ has to be interpreted as $u$.
%The space $W^{m,A}(\rn)$ is a Banach space equipped with the
%norm
%$$\|u\|_{W^{m,A}(\rn )} = \sum _{k=0}^m  \|\nabla^k u
%\|_{L^A(\rn)}\,.$$
\\ As for fractional-order spaces, given $s\in (0,1)$,
%Define the modular functional $\Phi_{A,s}$ as
%\begin{equation}\label{10'}
%\Phi_{A,s}(u)=  \displaystyle \int_{\rn}\int_{\rn} A\left(\frac{| u(x) -  u(y)|}{|x-y|^{s}}\right)\, \frac{dx\,dy}{|x-y|^n}
%\end{equation}
%for $u \in\M (\rn)$.
the seminorm $|u|_{s,A, \rn}$ is  defined as
\begin{equation}\label{aug340}
|u|_{s,A, \rn}=  \inf\Big\{\lambda>0: J_{s,A}\Big(\frac u{\lambda}\Big)\leq 1\Big\}
%		= \inf\left\{\lambda>0: \int_{\rn} \int_{\rn}A\left(\frac{|u(x)-u(y)|}{\lambda|x-y|^s}\right)\frac{\,dx\,dy}{|x-y|^n}\le1\right\}\,.
\end{equation}
for $u \in \mathcal M (\rn)$, where $J_{s,A}$ is the functional given by \eqref{intro1}.
The \emph{homogeneous
 fractional Orlicz-Sobolev space} $V^{s,A}(\rn)$ is defined as
\begin{equation}\label{aug341}
	V^{s,A}(\rn) = \big\{u \in \mathcal M (\rn):  |u|_{s,A, \rn}<\infty\}\,.
\end{equation}
The definitions of the seminorm $|u|_{s,A, \rn}$ and of the space $V^{s,A}(\rn)$ carry over to vector-valued functions $u$ just by replacing the absolute value of $u(x)-u(y)$ by the Euclidean norm  of the same expression in the definition of the functional $J_{s,A}$.
\\
The subspace   of those functions in $V^{s,A}(\rn)$ which decay near infinity is denoted by $V^{s,A}_d(\rn)$. Thus,
\begin{equation}\label{nov100}
	V^{s,A}_d(\rn) =  \{ u\in V^{s,A}(\rn): |\{|u|>t\}|<\infty\,\, \text{for every $t>0$}\} \,.
\end{equation}
The definition of $V^{s,A}(\rn)$ is extended to all $s\in (0, \infty) \setminus \N$ in a standard way. On
denoting, as above, by $[s]$ and  $\{s\}$ the integer and the fractional part of $s$, respectively,
we define
\begin{equation}\label{aug343}
V^{s,A}(\rn ) = \{u \in \mathcal M(\rn): \text{$u$ is   $[s]$-times weakly differentiable in $\rn$ and  $ \nabla ^{[s]}u \in V^{\{s\}, A}(\rn)$}\}\,.
\end{equation}
In analogy with \eqref{nov100},  for  every $s\in (0, \infty) \setminus \N$ we set
\begin{equation}\label{nov100higher}
	V^{s,A}_d(\rn) =  \{ u \in V^{s,A}(\rn): |\{|\nabla ^k u|>t\}|<\infty\,\ \text{for  every $t>0$ and for $k=0,1, \dots ,[s]$}\} \,.
\end{equation}
If $s$ and  $A$  fulfill conditions \eqref{s} and \eqref{indisp}, then
the functional $\big|\nabla ^{[s]}u\big|_{\{s\},A, \rn}$ defines a norm on the space $V^{s,A}_d(\rn)$,
and the latter, equipped with this norm, is a Banach space. This is the content of Proposition \ref{banach}, Section \ref{proofmain}.

%The case of interest for our purposes is when
%\begin{equation}\label{indisp}
% s\in (0,n)\setminus \N \quad \text{and} \quad
% \int_0\left (\frac {t}{A(t)}\right)^{\frac s{n-s}} \; dt <\infty.
%\end{equation}
%These assumptions will be shown to be necessary for a  rearrangement-invariant space to exist into which the space $V_d^{s,A}(\rn)$  is continuously embedded -- see Theorem \ref{teo3} below.

%\todo[inline]{Angela: Why the marked sentence? The optimal embedding \eqref{may20} holds even in the supercritical case. }

When the
 subcritical growth condition \eqref{subcrit} is in force, optimal embeddings for  $V^{s,A}_d(\rn)$ take the following form.  The optimal Orlicz target  space for $V_d^{s,A}(\rn)$ is built upon the Young function
 $A_{\frac{n}{s}}$ defined, for $n \in \N$ and $s \in (0, n)\setminus \N$, as
%
%\begin{equation}\label{E:0'}
%	\int^{\infty}\left(\frac{t}{A(t)}\right)^{\frac{s}{n-s}}\,dt = \infty
%\end{equation}
%and
%\begin{equation}\label{E:0''}
%	\int_{0}\left(\frac{t}{A(t)}\right)^{\frac{s}{n-s}}\,dt < \infty.
%\end{equation}	
%Such an Orlicz target is defined in terms of the Young function
\begin{equation}\label{Ans}
A_{\frac{n}{s}} (t) = A(H^{-1}(t)) \quad \text{for $t\geq 0$,}
\end{equation}
where
\begin{equation}\label{H}
H(t) = \bigg(\int _0^t \bigg(\frac \tau{A(\tau)}\bigg)^{\frac
{s}{n-s}} d\tau\bigg)^{\frac {n-s}n} \quad \text{for $t \geq0$.}
\end{equation}
Indeed,  \cite[Theorems~6.1 and~7.1]{ACPS_frac} tell us that
\begin{equation}\label{may20}
V_d^{s,A}(\rn) \to L^{A_{\frac{n}{s}}}(\rn),
\end{equation}
and there exists a constant $c$ such that
\begin{equation}\label{may21}
\|u\|_{L^{A_{\frac ns}} (\rn)} \leq c  \big|\nabla ^{[s]}u\big|_{\{s\},A, \rn}
\end{equation}
for every $u \in V_d^{s,A}(\rn)$. Moreover,
the space $L^{A_{\frac{n}{s}}}(\rn)$ is the smallest possible target in \eqref{may20} in the class of all Orlicz spaces.
\\ The target space $L^{A_{\frac{n}{s}}}(\rn)$ in embedding \eqref{may20} can yet be enhanced if the realm of Orlicz spaces is abandoned, and also Orlicz-Lorentz spaces are allowed. Specifically, the
 Orlicz-Lorentz space  $L(\widehat A,\frac{n}{s})(\rn)$ comes into play, and is endowed with the norm defined, according to equation \eqref{aug300}, by
\begin{equation}\label{E:29}
	\|u\|_{L(\widehat A,\frac{n}{s})(\rn)}
		= \big\|r^{-\frac{s}{n}}u^{*}(r)\big\|_{L^{\widehat A}(0,\infty)}
\end{equation}
for $u \in \M (\rn)$, where
 $\widehat A$ is the Young function given by
\begin{equation}\label{E:1}
	\widehat A (t)=\int_0^t\widehat a (\tau)\,d\tau\quad\text{for $t\geq 0$},
\end{equation}
and
\begin{equation}\label{E:2}
	{\widehat a\,}^{-1}(t) = \left(\int_{a^{-1}(t)}^{\infty}
		 \left(\int_0^{\tau}\left(\frac{1}{a(\theta)}\right)^{\frac{s}{n-s}}\,d\theta\right)^{-\frac{n}{s}}\frac{d\tau}{a(\tau)^{\frac{n}{n-s}}}
				\right)^{\frac{s}{s-n}}
					\quad\text{for $t\ge0$}.
\end{equation}
One has that
\begin{equation}\label{may22}
V_d^{s,A}(\rn) \to L(\widehat A,\tfrac{n}{s})(\rn),
\end{equation}
and there exists a constant $c$ such that
\begin{equation}\label{may23}
\|u\|_{L(\widehat A,\frac{n}{s})(\rn)} \leq c  \big|\nabla ^{[s]}u\big|_{\{s\},A, \rn}
\end{equation}
for every $u \in V_d^{s,A}(\rn)$. Furthermore, the space $L(\widehat A,\frac{n}{s})(\rn)$ in embedding \eqref{may22} is  the smallest possible among all rearrangement-invariant spaces.
\\ Hence,
\begin{equation}\label{4.30}
 L(\widehat{A}, \tfrac ns )(\rn)  \to L^{A_{\frac ns}}(\rn)\,.
\end{equation}
By \cite[Proposition 4.1]{ACPS_frac}, the norm of embedding \eqref{4.30} depends only on $\frac ns$.

In particular, the proof of the optimality of embeddings \eqref{may20} and \eqref{may22} rests upon the fact that, given any rearrangement-invariant space $Y(\rn)$,
\begin{equation}\label{rednec}
\text{if \,$V_d^{s,A}(\rn) \to Y(\rn)$, \,then \, $\bigg\|\int_{r}^{\infty}f(\rho)\rho^{-1+\frac{s}{n}}\,d\rho \bigg\|_{\overline{Y}(0, \infty)}
		\leq c\|f\|_{L^A(0, \infty)}$   for every   $f\in L^A(0,\infty)$,}
\end{equation}
for some constant $c$.
\\ Property \eqref{rednec}  is proved in \cite[Lemmas 6.5 and 7.6]{ACPS_frac}. It
 is one of the two implications of the
 reduction principle contained in Theorem \ref{T:reduction_principle}, Section \ref{sec}, the novelty here being the reverse one.

\section{Main results }\label{sec}

As a preliminary for the embeddings of $V_d^{s,A}(\rn)$  to be offered, we state the necessity of conditions \eqref{s} and \eqref{indisp}  on $s$ and $A$.

%As mentioned in Section \ref{sec1}, conditions \eqref{s} and \eqref{indisp} are
%necessary   for any embedding  of $V_d^{s,A}(\rn)$ of the form \eqref{general} to hold.
%This is the content of the following preparatory result.

\begin{theorem}{\rm {\bf  [Admissible $s$ and $A$]}}
\label{teo3}
Let $s\in (0, \infty)\setminus \N$ and let $A$ be a Young function.  Assume that   embedding \eqref{general}
holds for some  rearrangement-invariant space $Y(\rn)$.  Then, $s$ and $A$ fulfill conditions \eqref{s} and \eqref{indisp}.
%
%Assume that either
%\begin{equation}\label{divintinf}
%s\in (0,n) \quad \text{and} \quad \int_0  \left (\frac {t}{A(t)}\right)^{\frac s{n-s}} \; dt =\infty,
%\end{equation}
%or
%\begin{equation}\label{s>n}
%s\in (n,\infty).
%\end{equation}
% Then,  no embedding of the form
%\begin{equation}\label{embY}
%V_d^{s,A}(\rn) \to  Y(\rn)
%\end{equation}
%can hold for any rearrangement-invariant space $ Y(\rn)$.
\end{theorem}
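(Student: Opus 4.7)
The plan splits naturally into verifying the two conditions separately. For the necessity of $s\in(0,n)\setminus\N$, since $s\notin\N$ is already assumed, I need only exclude $s>n$, which I would do by a scaling contradiction. Fix any nontrivial $u\in V_d^{s,A}(\rn)$ (the space is nontrivial; it contains, for instance, all smooth compactly supported functions) and consider the dilations $u_R(x):=u(x/R)$ for $R>1$. For $s\in(0,1)$, an elementary change of variables in \eqref{intro1} gives the identity $J_{s,A}(u_R/\lambda) = R^n J_{s,A}(u/(\lambda R^s))$, with the analogous identity in the higher-order case obtained by applying this to $\nabla^{[s]}u$ and using the scaling rule $\nabla^{[s]}u_R = R^{-[s]}(\nabla^{[s]}u)(\cdot/R)$. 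Combined with the convexity consequence $A(ct)\le cA(t)$ valid for $c\in[0,1]$, this yields $|u_R|_{s,A,\rn}\le R^{n-s}|u|_{s,A,\rn}$. On the other hand, $(u_R)^*(t) = u^*(t/R^n)$; together with the monotonicity of the fundamental function $\phi_{\overline{Y}}(t):=\|\chi_{(0,t)}\|_{\overline{Y}}$, this forces $\|u_R\|_Y\ge u^*(\sigma)\phi_{\overline{Y}}(\sigma)>0$ uniformly in $R>1$, for any $\sigma>0$ at which $u^*(\sigma)>0$. If $s>n$, then letting $R\to\infty$ drives $|u_R|_{s,A,\rn}\to 0$, contradicting the embedding inequality $\|u_R\|_Y\le C|u_R|_{s,A,\rn}$.

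For the necessity of condition \eqref{indisp}, I would exploit the reduction principle \eqref{rednec}, which (as recalled in the excerpt) is already established in \cite[Lemmas~6.5 and~7.6]{ACPS_frac}. It converts the assumed embedding into the Hardy-type inequality $\|Tf\|_{\overline{Y}(0,\infty)}\le c\|f\|_{L^A(0,\infty)}$ for every $f\in L^A(0,\infty)$, where $Tf(r):=\int_r^\infty f(\rho)\rho^{-1+s/n}\,d\rho$. Since $Tf$ is nonincreasing for $f\ge 0$, any fixed $r_0>0$ yields $Tf(r_0)\phi_{\overline{Y}}(r_0)\le \|Tf\|_{\overline{Y}}\le c\|f\|_{L^A}$, so that $\int_{r_0}^\infty f(\rho)\rho^{-1+s/n}\,d\rho\le C\|f\|_{L^A(0,\infty)}$ for every nonnegative $f\in L^A(0,\infty)$. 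The converse H\"older inequality \eqref{holderconv} now forces $\rho^{-1+s/n}\chi_{(r_0,\infty)}\in L^{\widetilde{A}}(0,\infty)$, i.e.,
\[
\int_{r_0}^\infty \widetilde{A}(c\rho^{-1+s/n})\,d\rho<\infty
\]
for some $c>0$. The change of variable $\tau=c\rho^{-1+s/n}$ then recasts this integrability as $\int_0 \widetilde{A}(\tau)\tau^{-1-n/(n-s)}\,d\tau<\infty$, and a standard Young-function manipulation (Fubini applied to the representation $\widetilde{A}(\tau)=\int_0^\tau a^{-1}(s)\,ds$) yields the equivalence with \eqref{indisp}.

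The main obstacle is the final conversion in the second part, from the $L^{\widetilde{A}}$-integrability of the explicit kernel $\rho^{-1+s/n}$ into the displayed form \eqref{indisp}; this requires careful manipulation of Young conjugates, although it is quite routine once the kernel-level integrability has been extracted. By contrast, the first part is a clean scaling argument, and the derivation of the Hardy-type bound in the second part is immediate from the already-established reduction principle \eqref{rednec}.
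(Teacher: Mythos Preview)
Your argument is correct, and both halves differ from the paper's execution while remaining close in spirit.

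For the exclusion of $s>n$, the paper also uses dilation, but with an amplitude factor: it takes $u_j(x)=j^{s-n}\xi(x/j)$ for a fixed $\xi\in C_0^\infty(\rn)$, verifies that the difference quotient $\big|\nabla^{[s]}u_j(x)-\nabla^{[s]}u_j(y)\big|/|x-y|^{\{s\}}$ is uniformly bounded in $j$, and then exploits the linear bound $A(t)\le ct$ near $0$ to conclude that $\big|\nabla^{[s]}u_j\big|_{\{s\},A,\rn}$ stays bounded. On the target side it passes through the universal embedding $Y(\rn)\to (L^1+L^\infty)(\rn)$ and computes $\|u_j\|_{L^1+L^\infty}\ge j^{s-n}\to\infty$. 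Your version is tidier: no amplitude rescaling, convexity alone yields $\big|\nabla^{[s]}u_R\big|_{\{s\},A,\rn}\le R^{n-s}\big|\nabla^{[s]}u\big|_{\{s\},A,\rn}$, and you lower-bound $\|u_R\|_Y$ directly via the fundamental function of $Y$, avoiding the detour through $L^1+L^\infty$. The paper's route, on the other hand, makes every constant explicit and does not invoke the abstract convexity inequality.

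For the necessity of \eqref{indisp}, the paper simply cites \cite[Proposition~6.3 and Remark~7.3]{ACPS_frac}. Your deduction from \eqref{rednec} via the converse H\"older inequality is a genuinely different route and is self-contained modulo \eqref{rednec}; the conversion $\int_0 \widetilde A(\tau)\tau^{-1-n/(n-s)}\,d\tau<\infty \Leftrightarrow \int_0 a^{-1}(t)t^{-n/(n-s)}\,dt<\infty \Leftrightarrow$ \eqref{indisp} is exactly the equivalence recorded in the paper (via \cite[Lemma~2.3]{cianchi-ibero}) around \eqref{1'}. One caveat worth making explicit: you are invoking \eqref{rednec} \emph{before} \eqref{indisp} has been established, so you must check that \cite[Lemmas~6.5 and~7.6]{ACPS_frac} do not themselves assume \eqref{indisp}. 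This is consistent with the hypothesis list of Theorem~\ref{T:reduction_principle} here, which requires only $s\in(0,n)\setminus\N$, but it is the one point where your chain of implications could conceivably be circular and should be verified against the source.
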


Having clarified the indispensability  of assumptions  \eqref{s} and \eqref{indisp}, we are ready to state our first main result. It tells us that the supercritical growth condition \eqref{supercrit} is necessary and sufficient
 for  the space $V_d^{s,A}(\rn)$ to be continuously embedded in $L^\infty(\rn)$.
 The relevant condition turns also to be equivalent to the embedding  of $V_d^{s,A}(\rn)$ into  the space $C^0(\rn)$ of continuous functions in $\rn$, equipped with the standard norm. The fact that a function in $V_d^{s,A}(\rn)$ belongs to
$C^0(\rn)$ has to be interpreted, as usual, in the sense that $u$ agrees a.e. in $\rn$ with a continuous function in  $\rn$.

\begin{theorem}{\rm {\bf [Embeddings into $L^\infty(\rn)$ and $C^0(\rn)$]}}
\label{teo_cont} Assume that $s$ and $A$ satisfy conditions \eqref{s} and \eqref{indisp}.
%Let $s\in (0, n)\setminus \N$ and let $A$ be a Young function such that
%\begin{equation}\label{2.8-0}
% \int_0\left (\frac {t}{A(t)}\right)^{\frac s{n-s}} \; dt <\infty.
%\end{equation}
Then, the following statements are equivalent:

\smallskip\par\noindent  (i)  Condition \eqref{supercrit} holds;
%\begin{equation}\label{2.8}
% \int ^\infty \left (\frac {t}{A(t)}\right)^{\frac s{n-s}} \; dt <\infty;
%\end{equation}

\smallskip\par\noindent
(ii) The embedding
\begin{equation}\label{linf}
V_d^{s,A}(\rn) \to  L^\infty(\rn)
\end{equation}
holds;

\smallskip\par\noindent (iii) The embedding
\begin{equation}\label{cont}
V_d^{s,A}(\rn) \to C^0(\rn)
\end{equation}
holds.
\\
Moreover, if condition \eqref{supercrit} is in force, then there exists a constant $c$ such that
\begin{equation}\label{201bis}
\|u\|_{L^\infty(\rn)} \leq c\, \left(\int_{\rn} \int_{\rn} A\left(\frac{| \nabla ^{[s]}u (x)-  \nabla ^{[s]}u(y)| }{|x-y|^{\{s\}}}\right)\, \frac{dxdy}{|x-y|^n}\right)^{\frac sn}
\end{equation}
for every $u \in V_d^{s,A}(\rn)$.
\end{theorem}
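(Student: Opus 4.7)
The proof establishes the cycle (iii) $\Rightarrow$ (ii) $\Rightarrow$ (i) $\Rightarrow$ (ii) $\Rightarrow$ (iii) together with the quantitative estimate \eqref{201bis}. The unifying tool is Theorem~\ref{T:reduction_principle}, which identifies the embedding $V_d^{s,A}(\rn) \to L^\infty(\rn)$ with the one-dimensional Hardy-type inequality
\[
\left\|\int_r^\infty f(\rho)\rho^{-1+\frac{s}{n}}\,d\rho\right\|_{L^\infty(0,\infty)} \le c\|f\|_{L^A(0,\infty)} \quad\text{for every }f\in L^A(0,\infty),
\]
using that $\overline{L^\infty(\rn)}=L^\infty(0,\infty)$. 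For $f\ge 0$ the inner integral is non-increasing in $r$ and its supremum is attained as $r\to 0^+$, so this inequality is equivalent to $\int_0^\infty f(\rho)\rho^{-1+s/n}\,d\rho \le c\|f\|_{L^A(0,\infty)}$, and by the two H\"older inequalities \eqref{holder} and \eqref{holderconv} in Orlicz spaces it is in turn equivalent to the single condition $\rho^{-1+s/n}\in L^{\widetilde A}(0,\infty)$.

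For (ii) $\Rightarrow$ (i), I would apply the direct direction of the reduction principle (property~\eqref{rednec}) to the hypothesis \eqref{linf} to obtain the Hardy inequality, hence $\rho^{-1+s/n}\in L^{\widetilde A}(0,\infty)$, i.e., $\int_0^\infty \widetilde A(\lambda \rho^{-(n-s)/n})\,d\rho <\infty$ for some $\lambda>0$. Changing variable $t=\lambda\rho^{-(n-s)/n}$ transforms the integrability at $\rho=0$ into $\int^\infty \widetilde A(t)t^{-1-\frac{n}{n-s}}\,dt<\infty$, and via the relation $\widetilde A(t)\approx ta^{-1}(t)$ together with the substitution $\sigma=a^{-1}(t)$ this is a standard equivalent of \eqref{supercrit}. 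Conversely, for (i) $\Rightarrow$ (ii), the conditions \eqref{supercrit} and \eqref{indisp} together ensure the integrability of $\widetilde A(\lambda\rho^{-(n-s)/n})$ at both endpoints of $(0,\infty)$—\eqref{supercrit} handles the contribution near $\rho=0$ and \eqref{indisp} the contribution near $\rho=\infty$—hence the Hardy inequality holds, and the reverse direction of Theorem~\ref{T:reduction_principle} (the novel ingredient of the reduction principle) delivers the Luxemburg-type bound $\|u\|_{L^\infty(\rn)}\le c\,|\nabla^{[s]}u|_{\{s\},A,\rn}$.

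The quantitative bound \eqref{201bis} is obtained from this Luxemburg estimate by rescaling. For $u_\mu(x)=u(\mu x)$, a change of variables in \eqref{intro1} gives
\[
J_{\{s\},A}\!\left(\tfrac{\nabla^{[s]}u_\mu}{\mu^s}\right)=\mu^{-n}\,J_{\{s\},A}(\nabla^{[s]}u);
\]
choosing $\mu^n = J_{\{s\},A}(\nabla^{[s]}u)$ forces this expression to equal $1$, so $|\nabla^{[s]}u_\mu|_{\{s\},A,\rn}\le\mu^s = J_{\{s\},A}(\nabla^{[s]}u)^{s/n}$. Since $\|u\|_{L^\infty(\rn)}=\|u_\mu\|_{L^\infty(\rn)}$, inequality~\eqref{201bis} follows at once.

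It remains to close the chain. The implication (iii) $\Rightarrow$ (ii) is immediate, so only (ii) $\Rightarrow$ (iii) is left. Here I would approximate $u\in V_d^{s,A}(\rn)$ by smooth functions $u_k$, for instance mollifications, and apply the Luxemburg-type embedding to $u-u_k$ to obtain uniform convergence, thereby forcing $u$ to coincide a.e.\ with a continuous function. The main obstacle is showing $|\nabla^{[s]}(u-u_k)|_{\{s\},A,\rn}\to 0$ for a suitable regularizing sequence, i.e., a density result for smooth functions in $V_d^{s,A}(\rn)$ with respect to the fractional Orlicz-Sobolev seminorm. An alternative that avoids the density issue is to extract a direct modulus-of-continuity estimate from the tail of the Hardy inequality, using that $\|\chi_{(0,r)}\rho^{-1+s/n}\|_{L^{\widetilde A}(0,\infty)}\to 0$ as $r\to 0^+$, and then transfer continuity of $u^\ast$ to continuity of $u$ by a Lebesgue-point argument.
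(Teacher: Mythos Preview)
Your argument for (i)$\Rightarrow$(ii) is circular within the paper's logical structure. You invoke the reverse implication of Theorem~\ref{T:reduction_principle} (Hardy inequality $\Rightarrow$ Sobolev embedding) with $Y=L^\infty$, but the paper's proof of that implication in the supercritical regime passes through Theorem~\ref{teo1}, whose proof in turn uses embedding~\eqref{linf}, i.e.\ the very statement you are trying to establish. The paper avoids this by proving (i)$\Rightarrow$(ii) \emph{first}, via Theorem~\ref{bound}: one constructs, for any finite-valued Young function $E$, a subcritical Young function $B\lesssim A$ with $B\simeq A$ near~$0$ and $E\lesssim B_{n/s}$ near infinity (Lemmas~\ref{lemma2} and~\ref{lemma1}), applies the subcritical embedding~\eqref{may20} with $B$ in place of $A$, and concludes $V_d^{s,A}\subset L^E$ for every such $E$, which forces $V_d^{s,A}\subset L^\infty$ (Lemma~\ref{lemma4}). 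The closed graph theorem then upgrades inclusion to continuous embedding. This nontrivial detour through the subcritical theory is precisely what your use of the reduction principle shortcuts, but illegitimately.

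Your treatment of (ii)$\Rightarrow$(iii) correctly identifies the obstacle but does not resolve it. Seminorm density $|\nabla^{[s]}(u-u_k)|_{\{s\},A,\rn}\to 0$ generally fails when $A$ does not satisfy the $\Delta_2$-condition, so your first route breaks down as stated. The paper's fix is to use \emph{modular} approximation (Theorem~\ref{prop2}): mollifications satisfy $J_{\{s\},A}\big((\nabla^{[s]}u_j-\nabla^{[s]}u)/\lambda\big)\to 0$ for some $\lambda>0$. The point of having the estimate~\eqref{201bis} in modular rather than Luxemburg form is exactly that it can be applied to $(u_i-u_j)/\lambda$ to produce a Cauchy sequence in $C^0(\rn)$ from modular convergence alone. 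Your scaling derivation of~\eqref{201bis} is correct and matches the paper's; what is missing is the recognition that~\eqref{201bis}, not the Luxemburg embedding, is the tool that pairs with modular approximation to yield continuity.
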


The norm in  $L^\infty(\rn)$, regarded as an Orlicz space,  is, loosely speaking, the strongest   \lq\lq locally" in $\rn$, but the weakest  \lq\lq near infinity". Indeed, membership of a function in $L^\infty(\rn)$ does not entail any information on the behaviour of the measure of its level sets when the levels approach zero.  Under the same assumption \eqref{supercrit},  the next result augments embedding \eqref{linf}, and provides us with the optimal Orlicz target space on $\rn$ into which the space $V_d^{s,A}(\rn) $ is continuously embedded.
\\ The Orlicz space in question is built upon the Young function
 $A_{\frac{n}{s}}$ defined as in \eqref{Ans}--\eqref{H} for the subcritical embedding \eqref{may20}.  The novelty is that, since we are now assuming that condition \eqref{supercrit} holds,
the function $H^{-1}$ has to be interpreted as  the generalized left-continuous inverse of $H$. In particular,
\begin{equation}\label{tinf}
H^{-1}(t)=\infty \quad  \text {for}\quad t > \int_{0}^\infty\left(\frac{\tau}{A(\tau)}\right)^{\frac{s}{n-s}}\,d\tau,
\end{equation}
and
\begin{equation}\label{inf}
A_{\frac{n}{s}}(t)=\infty
\end{equation}
for $t$ as in \eqref{tinf}. In particular,
\begin{equation}\label{subset}
L^{A_{\frac{n}{s}} }(\rn) \subsetneq L^\infty(\rn).
\end{equation}

\begin{theorem}
{\rm {\bf [Optimal Orlicz target space]}}
\label{teo2}
Assume that $s$ and $A$ satisfy conditions  \eqref{s}, \eqref{supercrit} and \eqref{indisp}.
%
%Let $s\in (0, n)\setminus \N$ and let $A$ be a Young function fulfilling assumptions \eqref{2.8-0}and \eqref{2.8}.
Let $A_{\frac ns}$ be the Young function defined by \eqref{Ans}--\eqref{H}.
Then,
\begin{equation}\label{2.10}
V_d^{s,A}(\rn) \to L^{A_{\frac ns}} (\rn),
\end{equation}
and there exists a constant $c$ such that
\begin{equation}\label{2.10norm}
\|u\|_{L^{A_{\frac ns}} (\rn)} \leq c  \big|\nabla ^{[s]}u\big|_{\{s\},A, \rn}
\end{equation}
for every $u \in V_d^{s,A}(\rn)$.
Moreover, the space $L^{A_{\frac ns}} (\rn)$ is optimal among all Orlicz target spaces in \eqref{2.10}.
%In particular, $L^{A_{\frac ns}} (\rn) \subsetneq L^\infty(\rn)$.
\end{theorem}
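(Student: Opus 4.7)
The plan is to obtain the supercritical embedding \eqref{2.10} by combining the subcritical embedding \eqref{may20}--\eqref{may21} applied to an auxiliary Young function with the $L^{\infty}$ bound \eqref{201bis} from Theorem \ref{teo_cont}. The key structural observation is that, under \eqref{supercrit}, the function $H$ of \eqref{H} is bounded, so $A_{\frac{n}{s}}$ is finite on an interval $[0,t_{\ast}]$ with $t_{\ast}=\lim_{t\to\infty}H(t)$, and takes the value $+\infty$ on $(t_{\ast},\infty)$. Fix any $t_{0}>0$ and define $B(t)=A(t)$ for $t\in[0,t_{0}]$ and $B(t)=A(t_{0})+a(t_{0})(t-t_{0})$ for $t>t_{0}$, where $a$ is as in \eqref{young}. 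Convexity of $A$ yields $B\leq A$ globally, whence $J_{s,B}(u/\lambda)\leq J_{s,A}(u/\lambda)$ for every $\lambda>0$ and therefore $|\nabla^{[s]}u|_{\{s\},B,\rn}\leq |\nabla^{[s]}u|_{\{s\},A,\rn}$. Since $B$ is linear near infinity it is subcritical, and $B\equiv A$ near zero ensures that $B$ inherits \eqref{indisp}. Applying \eqref{may20}--\eqref{may21} with $B$ in place of $A$ furnishes a constant $c_{1}$ with
\[
\|u\|_{L^{B_{\frac{n}{s}}}(\rn)}\leq c_{1}\,\big|\nabla^{[s]}u\big|_{\{s\},A,\rn}
\]
for every $u\in V_{d}^{s,A}(\rn)$.

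Set $M=\big|\nabla^{[s]}u\big|_{\{s\},A,\rn}$; by \eqref{201bis} there is $c_{2}$ such that $\|u\|_{L^{\infty}(\rn)}\leq c_{2}M$. Because $A\equiv B$ on $[0,t_{0}]$, a direct inspection of \eqref{Ans}--\eqref{H} shows that $A_{\frac{n}{s}}(\sigma)=B_{\frac{n}{s}}(\sigma)$ for $\sigma\in[0,H(t_{0})]$, where $H(t_{0})$ is a fixed positive finite number. Choosing $C=\max(c_{1},c_{2}/H(t_{0}))$ ensures simultaneously that $|u(x)|/(CM)\leq c_{2}/C\leq H(t_{0})$ almost everywhere and that $\int_{\rn}B_{\frac{n}{s}}(|u|/(CM))\,dx\leq 1$. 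Consequently,
\[
\int_{\rn}A_{\frac{n}{s}}\!\left(\frac{|u|}{CM}\right)dx=\int_{\rn}B_{\frac{n}{s}}\!\left(\frac{|u|}{CM}\right)dx\leq 1,
\]
which proves \eqref{2.10norm} and thus \eqref{2.10}; the inclusion \eqref{subset} then confirms that the Orlicz target properly refines the $L^{\infty}$ embedding of Theorem \ref{teo_cont}.

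For the optimality of $L^{A_{\frac{n}{s}}}(\rn)$ among Orlicz targets, suppose $V_{d}^{s,A}(\rn)\to L^{E}(\rn)$ for some Young function $E$. By the reduction principle of Theorem \ref{T:reduction_principle}, whose necessity direction appears in \eqref{rednec}, this forces the one-dimensional Hardy-type inequality in \eqref{rednec} with $\overline{Y}(0,\infty)=L^{E}(0,\infty)$. Testing this inequality against suitably rescaled step functions $f=\chi_{(0,r_{0})}$, in the spirit of the optimality argument of the subcritical case in \cite{ACPS_frac}, extracts the domination $E\lesssim A_{\frac{n}{s}}$ near infinity, which by \eqref{normineq} is equivalent to $L^{A_{\frac{n}{s}}}(\rn)\to L^{E}(\rn)$. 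The main obstacle is the coupled calibration of $t_{0}$ and $C$: the $L^{\infty}$ bound of Theorem \ref{teo_cont} must squeeze $|u|/(CM)$ below the threshold $H(t_{0})$ where $A_{\frac{n}{s}}$ and $B_{\frac{n}{s}}$ coincide, while $C$ must concurrently dominate $c_{1}$. The optimality step is likewise delicate, since the natural extremal test functions must probe $A_{\frac{n}{s}}$ arbitrarily close to its $\infty$-jump at $t_{\ast}$ without ever reaching it.
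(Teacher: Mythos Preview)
Your argument for the embedding \eqref{2.10}--\eqref{2.10norm} is correct and in fact cleaner than the paper's. The paper routes the proof through the Orlicz--Lorentz machinery of Theorem \ref{teo1}: it uses the chain $V_d^{s,A}\to (L^\infty\cap L(\widehat B,\tfrac ns))\to (L^\infty\cap L^{B_{n/s}})\to L^{A_{n/s}}$, invoking the subcritical optimal rearrangement-invariant embedding \eqref{may22}, the projection \eqref{4.30}, and an external lemma \cite[Lemma~5.1]{ACPS_entire}. Your construction of $B$ as a linear truncation of $A$ beyond $t_0$, together with the direct observation that $A_{n/s}\equiv B_{n/s}$ on $[0,H(t_0)]$ and the calibration via the $L^\infty$ bound, bypasses Orlicz--Lorentz spaces entirely and needs nothing beyond \eqref{may20}--\eqref{may21} and Theorem \ref{teo_cont}. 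This is a genuine simplification. (A minor point: what you use from Theorem \ref{teo_cont} is the norm embedding \eqref{linf}, not literally \eqref{201bis}; the latter yields the former after rescaling $u$ by $M$, so no harm done.)

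The optimality argument, however, has a real gap. You assert that testing the Hardy inequality with $f=\chi_{(0,r_0)}$ yields $E\lesssim A_{n/s}$ \emph{near infinity}, and that this is equivalent via \eqref{normineq} to $L^{A_{n/s}}(\rn)\to L^{E}(\rn)$. Both steps fail. First, since $A_{n/s}(t)=\infty$ for $t>t_\ast$ by \eqref{inf}, the statement ``$E\lesssim A_{n/s}$ near infinity'' is vacuously true for \emph{every} Young function $E$ and carries no information. Second, on a space of infinite measure the embedding $L^{A_{n/s}}(\rn)\to L^{E}(\rn)$ is equivalent to \emph{global} domination $E\lesssim A_{n/s}$; domination only near infinity would suffice on finite-measure domains but not here. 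What is actually needed is the domination near zero, i.e.\ control of $E$ by $A_{n/s}$ on $[0,t_\ast]$, and your test functions as written do not deliver this. The paper handles optimality by reducing to the Hardy inequality \eqref{4.35} via \eqref{rednec} and then invoking \cite[Lemma~1]{cianchi-ibero}, which identifies $L^{A_{n/s}}(0,\infty)$ as the smallest Orlicz target for that inequality; you would need either to cite that result or to supply a genuine test-function argument extracting domination near zero.
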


Let us point out that,
in view of property \eqref{inf}, only the asymptotic behaviour of the function  $A_{\frac ns}$ near zero has to be detected in identifying the space $L^{A_{\frac ns}} (\rn)$ in concrete applications.

  As recalled above, although   it is just stated  in the supercritical regime \eqref{supercrit} focused in this paper, Theorem \ref{teo2} also holds in the complementary situation when condition \eqref{subcrit} is in force.
\medskip
\par
%\note[inline]{Lubos: Should the comment concerning Theorem  \ref{teo1} be moved below the statement?}

The embedding provided by the next result stands to embedding \eqref{2.10} as embedding \eqref{may22} stands to embedding \eqref{may20}. Actually, it
tells us that embedding \eqref{2.10} can still be improved, provided that the class of admissible target spaces is further broadened as to include all rearrangement-invariant spaces. The optimal target in this class for embeddings of the space $V_d^{s,A}(\rn)$ can be obtained as the intersection of $L^\infty(\rn)$ with the Orlicz-Lorentz space appearing in \eqref{may22}.
%
%a space of Orlicz-Lorentz type. In a sense, the former accounts for the local behaviour of functions, whereas the latter describes the behaviour of functions near infinity.
%\\ The apropos Orlicz-Lorentz space is denoted by $L(\widehat A,\frac{n}{s})(\rn)$, and is endowed with the norm defined by
%\begin{equation}\label{E:29}
%	\|u\|_{L(\widehat A,\frac{n}{s})(\rn)}
%		= \|r^{-\frac{s}{n}}u^{*}(r)\|_{L^{\widehat A}(0,\infty)}
%\end{equation}
%for a measurable function $u$, where
% $\widehat A$ is the Young function given by
%\begin{equation}\label{E:1}
%	\widehat A (t)=\int_0^t\widehat a (\tau)\,d\tau\quad\text{for $t\geq 0$},
%\end{equation}
%and
%\begin{equation}\label{E:2}
%	{\widehat a\,}^{-1}(r) = \left(\int_{a^{-1}(r)}^{\infty}
%		 \left(\int_0^t\left(\frac{1}{a(\rho)}\right)^{\frac{s}{n-s}}\,d\varrho\right)^{-\frac{n}{s}}\frac{dt}{a(t)^{\frac{n}{n-s}}}
%				\right)^{\frac{s}{s-n}}
%					\quad\text{for $r\ge0$}.
%\end{equation}

\medskip

\begin{theorem}
{\rm {\bf [Optimal rearrangement-invariant target space]}}
\label{teo1}
Assume that $s$ and $A$ satisfy conditions  \eqref{s}, \eqref{supercrit} and  \eqref{indisp}. Let $\widehat A$ be the Young function associated with $A$ as in \eqref{E:1}--\eqref{E:2}.
Then,
\begin{equation}\label{2.9}
V_d^{s,A}(\rn) \to \big(L^\infty  \cap L\big ( \widehat{A} , \tfrac ns\big)\big)(\rn),
\end{equation}
and there exists a constant $c$ such that
\begin{equation}\label{2.9norm}
\|u\|_{ \left(L^\infty  \cap L( \widehat{A} , \frac ns)\right)(\rn)} \leq c  \big|\nabla ^{[s]}u\big|_{\{s\},A, \rn}
\end{equation}
for every $u \in V_d^{s,A}(\rn)$.
Moreover, the space $ \big(L^\infty  \cap L\big ( \widehat{A} , \tfrac ns\big)\big)(\rn)$ is optimal among all rearrangement-invariant target spaces  in \eqref{2.9}.
\end{theorem}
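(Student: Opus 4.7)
The embedding into $L^\infty(\rn)$ is already supplied by Theorem \ref{teo_cont}, so the task reduces to proving the embedding into $L(\widehat A,\tfrac{n}{s})(\rn)$ and the optimality of the intersection $L^\infty\cap L(\widehat A,\tfrac{n}{s})(\rn)$ among all rearrangement-invariant targets. My plan is to route both parts through the reduction principle of Theorem \ref{T:reduction_principle}, which equates the embedding $V_d^{s,A}(\rn)\to Y(\rn)$ with the validity of the one-dimensional Hardy-type inequality
\[
\bigg\|\int_{r}^{\infty} f(\rho)\,\rho^{-1+\frac{s}{n}}\,d\rho\bigg\|_{\overline Y(0,\infty)} \leq c\,\|f\|_{L^A(0,\infty)}
\]
for every $f\in L^A(0,\infty)$.

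For the sufficiency, I would verify this Hardy inequality with $\overline Y=\overline{L(\widehat A,n/s)}(0,\infty)$. In the subcritical regime of \cite{ACPS_frac} this is precisely what produces \eqref{may22}, and the formulas \eqref{E:1}--\eqref{E:2} defining $\widehat A$ remain well-posed under \eqref{supercrit}. The point is therefore to re-examine the weighted Orlicz-Lorentz computations of \cite{ACPS_frac} so that they still deliver the estimate in the supercritical case, or, if that runs into trouble, to approximate $A$ from below by Young functions $A_N$ that coincide with $A$ near the origin and are continued subcritically beyond a growing truncation level, invoke the subcritical Hardy inequality uniformly in $N$, and pass to the limit using that $\widehat{A_N}\to\widehat A$ in a suitable sense. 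I expect this step to be the principal technical hurdle, since the subcritical proof uses the divergence of $\int^\infty(\tau/A(\tau))^{s/(n-s)}\,d\tau$ in places whose supercritical counterpart must now be handled differently.

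For the optimality, assume $V_d^{s,A}(\rn)\to Y(\rn)$ for a rearrangement-invariant $Y(\rn)$. The implication \eqref{rednec} of the reduction principle yields the Hardy inequality with $\overline Y$ on the left-hand side. It then remains to establish the one-dimensional embedding $\overline{L^\infty\cap L(\widehat A,n/s)}(0,\infty)\to\overline Y(0,\infty)$, which by \eqref{inclusion-embedding} and \eqref{B.3} will transfer back to $\rn$. Given a non-increasing $\varphi$ of unit norm in the intersection representation space, I would construct a non-negative $f\in L^A(0,\infty)$ obeying $\varphi(r)\leq c\int_r^\infty f(\rho)\rho^{-1+s/n}\,d\rho$ for $r>0$ with $\|f\|_{L^A(0,\infty)}\leq c$, and then apply the Hardy inequality for $\overline Y$ to close the loop. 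The two components of the intersection norm are tailored precisely to this inversion: the $L^\infty$ piece absorbs the contribution of $f$ near zero, where the Hardy weight $\rho^{-1+s/n}$ is integrable against $L^A$-functions exactly by virtue of \eqref{supercrit}, while the Orlicz-Lorentz piece absorbs the contribution of $f$ on the remainder of $(0,\infty)$ through the very definition of $\widehat A$ in \eqref{E:1}--\eqref{E:2}. Designing a decomposition of $\varphi$ that distributes $\varphi(0+)$ to the $L^\infty$ block and the decay of $\varphi$ to the $L(\widehat A,n/s)$ block with norm control is the delicate part of the optimality argument.
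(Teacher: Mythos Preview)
Your main plan contains a circularity. You propose to obtain the embedding $V_d^{s,A}(\rn)\to L(\widehat A,\tfrac ns)(\rn)$ by first verifying the Hardy inequality \eqref{E:ii} with $\overline Y=\overline{L(\widehat A,n/s)}(0,\infty)$ and then invoking the implication (ii)$\Rightarrow$(i) of Theorem~\ref{T:reduction_principle}. But in the paper's logical order, that very implication in the supercritical regime is \emph{derived from} Theorem~\ref{teo1}: the proof of (ii)$\Rightarrow$(i) under \eqref{supercrit} begins with ``Theorem~\ref{teo1} provides us with inequality \eqref{2.9norm}\dots''. So the reduction principle is not available to you as a black box at this point; it is a corollary of the theorem you are trying to prove, not an independent tool.

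Your backup idea---approximating $A$ by subcritical $A_N$ agreeing with $A$ near zero and passing to a limit---is on the right track but more laborious than needed. The paper takes a \emph{single} subcritical Young function $B$ (supplied by Lemma~\ref{lemma1}) with $B\simeq A$ near $0$, and applies the already-known subcritical embedding \eqref{may22} directly:
\[
V_d^{s,A}(\rn)\to V_d^{s,B}(\rn)\to L\big(\widehat B,\tfrac ns\big)(\rn).
\]
Intersecting with the $L^\infty$ embedding from Theorem~\ref{teo_cont} gives $V_d^{s,A}(\rn)\to\big(L^\infty\cap L(\widehat B,\tfrac ns)\big)(\rn)$. The key observation you are missing is that, once you intersect with $L^\infty$, only the behaviour of $\widehat B$ near zero matters; since $B\simeq A$ near $0$ forces $\widehat B\simeq\widehat A$ near $0$, Lemma~\ref{lemma3} yields $\big(L^\infty\cap L(\widehat B,\tfrac ns)\big)(\rn)=\big(L^\infty\cap L(\widehat A,\tfrac ns)\big)(\rn)$ up to equivalent norms. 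No limiting procedure, no uniform control of $\widehat{A_N}\to\widehat A$, is required.

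Your optimality strategy is essentially the paper's: obtain the Hardy inequality for $\overline Y$ via \eqref{rednec}, then show that this inequality forces $\big(L^\infty\cap L(\widehat A,\tfrac ns)\big)(0,\infty)\to\overline Y(0,\infty)$. The paper simply imports this last implication from \cite[Theorem~1.1]{cianchi-ibero}; your sketch of constructing $f$ from $\varphi$ is exactly the content of that cited argument.
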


%A few comments on Theorem \ref{teo1} are in order.

\begin{remark}\label{rem-june}{\rm Recall that a customary  norm in  $\big(L^\infty \cap  L( \widehat{A} , \tfrac ns)\big)(\rn)$ is defined as
\begin{equation}\label{normsum}
\|u\|_{ L^\infty (\rn)} +\|u\|_{L( \widehat{A} , \frac ns)(\rn)}
\end{equation}
for $u \in \M (\rn)$.
%Since the space $ L( \widehat{A} , \tfrac ns)(\rn)$ enters embedding \eqref{2.9} only through its intersection with $L^\infty (\rn)$,
It turns out that, because of the presence of the norm in $L^\infty (\rn)$, only the decay of the function $ \widehat{A}$ at zero is relevant  in the definition of  $\big(L^\infty \cap  L( \widehat{A} , \tfrac ns)\big)(\rn)$. This is apparent when making use of an
%is shown in Lemma \ref{lemma3}, and follows from an
equivalent   norm in $\big(L^\infty \cap  L( \widehat{A} , \tfrac ns)\big)(\rn)$, given by
\begin{equation}\label{normequiv}
 \|u^* (r)\, \phi (r)\|_{L^{E_A} (0, \infty)}
\end{equation}
for $u \in \M (\rn)$, where
$E_A$ is a Young function such that
\begin{equation}\label{1.4}
E_A(t) \simeq
\begin{cases}
 \widehat{A}(t) & \hbox{near $0$}
 \\
 \infty & \hbox{near infinity}
 \end{cases}
\end{equation}
and the function
$ \phi : (0, \infty) \to [0, \infty)$ obeys
\begin{equation}\label{1.5}
\phi (r)= \min \Big\{1, r^{-\tfrac sn}\Big\}\qquad \hbox{for $r>0$}.
\end{equation}
The equivalence of the  norms \eqref{normsum} and \eqref{normequiv}
%\begin{equation*}
%    \|u\|_{ \left(L^\infty \cap L( \widehat{A} , \frac ns)\right)(\rn)}\approx \|u^* (r)\, \phi (r)\|_{L^{E_A} (0, \infty)},
%\end{equation*}
is established
in \cite[Proposition  2.1]{cianchi-ibero}.
\\ In this connection,
let us also mention that   $\widehat{A} \lesssim A$ in the sense of Young functions. Moreover,
\begin{equation}\label{equivhat}
 \widehat{A}(t)  \simeq  A(t) \quad \text{near zero}
\end{equation}
if and only if the upper Matuszewska-Orlicz index $I_0(A)$ of $A$ at zero, defined by \eqref{index},  satisfies
\begin{equation}\label{indexcond}
I_0(A) < \frac ns.
\end{equation}
Hence, under assumption \eqref{indexcond}, the space $L\big( \widehat{A} , \frac ns\big)(\rn)$  can be replaced  by $ L\big({A} , \frac ns\big)(\rn)$ in embedding \eqref{2.9}; namely, one has that
\begin{equation}\label{2.9bis}
V_d^{s,A}(\rn) \to \left(L^\infty  \cap L\big ({A} , \tfrac ns\big)\right) (\rn).
\end{equation}
}
\end{remark}

\begin{example}\label{EX:optimal-targets} Consider the space $V_d^{s,A}(\rn)$ associated with a Young function $A$
such that
\begin{equation}\label{E:young-domain}
    A(t)\simeq
        \begin{cases}
            t^{p_0} (\log \frac 1t)^{\alpha_0} & \quad \text{near zero}
                \\
            t^p  (\log t)^\alpha & \quad \text{near infinity.}
        \end{cases}
\end{equation}
In order for $A$ to be a Young function, the exponents appearing in equation \eqref{E:young-domain} are such that
either $p_0>1$ and $\alpha_0 \in \R$, or $p_0=1$ and $\alpha_0 \leq 0$, and either $p>1$ and $\alpha \in \R$ or $p=1$ and $\alpha \geq 0$.
\\
Let  $s\in(0,n)\setminus \N$. The function $A$ satisfies  the necessary   condition~\eqref{indisp}  from Theorem \ref{teo3} provided that
\begin{equation*}
    \text{either $1\le p_0<\frac{n}{s}$ and $\alpha_0\in\R$, or $p_0=\frac{n}{s}$ and $\alpha_0>\frac{n}{s}-1$.}
\end{equation*}
Moreover, the supercritical growth
assumption~\eqref{supercrit} amounts to requiring that
\begin{equation*}
    \text{either $p=\frac{n}{s}$ and $\alpha>\frac{n}{s}-1$, or $p>\frac{n}{s}$ and $\alpha\in\R$.}
\end{equation*}
Under these assumptions,
Theorem~\ref{teo2} tells us that  embedding~\eqref{2.10} and  inequality~\eqref{2.10norm} hold, with
\begin{equation}\label{E:orlicz-target-young-function}
  A_{\frac ns}(t)=\infty    \quad \text{near infinity,} \quad  \text{and} \quad   A_{\frac ns}(t)\simeq
    \begin{cases}
        t^{\frac {n{p_0}}{n-s{p_0}}} (\log \frac 1t)^{\frac {n\alpha_0}{n-s{p_0}}} & \quad \text{if $1\leq {p_0}< \frac ns$}
            \\
        e^{-t^{-\frac{n}{s(\alpha_0 +1)-n}}} & \quad  \text{if ${p_0}=\frac ns$ and $\alpha_0 > \frac ns -1$}
    \end{cases}
    \quad \text{near zero.}
\end{equation}
Moreover, the target space $L^{A_{\frac ns}}(\rn)$  is optimal among all Orlicz spaces.
\\ On the other hand, Theorem~\ref{teo1} implies that  embedding~\eqref{2.9} and   inequality~\eqref{2.9norm} hold,  where $\widehat A$ is any Young function such that
\begin{equation}\label{E:ri-target-young-function}
    \widehat A(t)\simeq
    \begin{cases}
        t^{p_0} (\log \frac 1t)^{\alpha_0} & \quad \text{if $1\leq {p_0}< \frac ns$}
            \\
        t^{\frac{n}{s}} (\log  \frac 1t)^{\alpha_0-\frac{n}{s}}
        & \quad  \text{if ${p_0}=\frac ns$ and $\alpha_0 > \frac ns -1$}
    \end{cases}
    \quad \text{near zero.}
\end{equation}
Moreover, the target space $\big(L^\infty  \cap L( \widehat{A} , \tfrac ns)\big)(\rn)$  is optimal among all rearrangement-invariant spaces.
\end{example}

%
%The necessity of conditions \eqref{indisp}
% in Theorems \ref{teo_cont} -- \ref{teo1} is the content of the next result. In fact,  it tells us that these conditions are in indispensable for the space $V_d^{s,A}(\rn)$ to be embedded into any rearrangement-invariant space.
%
%
%\begin{theorem}[\bf Necessary conditions for embeddings into rearrangement-invariant spaces]\label{teo3}
%Let $s\in (0, \infty)\setminus \N$ and let $A$ be a Young function.   Assume that either
%\begin{equation}\label{divintinf}
%s\in (0,n) \quad \text{and} \quad \int_0  \left (\frac {t}{A(t)}\right)^{\frac s{n-s}} \; dt =\infty,
%\end{equation}
%or
%\begin{equation}\label{s>n}
%s\in (n,\infty).
%\end{equation}
% Then,  no embedding of the form
%\begin{equation}\label{embY}
%V_d^{s,A}(\rn) \to  Y(\rn)
%\end{equation}
%can hold for any rearrangement-invariant space $ Y(\rn)$.
%\end{theorem}

%\medskip
%We emphasize that our assumptions in Theorems \ref{teo_cont} -- \ref{teo1} that $s\in (0,n)$ and that $A$ satisfies condition \eqref{2.8-0} are indispensable when discussing embeddings of the space $V_d^{s,A}(\rn)$ into rearrangement-invariant spaces. This is the content of the following result.
%

\bigskip
The proof of the results stated above is intertwined with a general characterization, of independent interest, of embeddings of the space $V^{s,A}_d(\rn)$ into rearrangement-invariant spaces $Y(\rn)$. It amounts to a reduction principle for such an embedding to a considerably simpler one-dimensional inequality for a Hardy type operator, depending only on $s$ and $n$, involving  the norms in the  spaces  $L^A(0, \infty)$ and $Y(0, \infty)$. This is the content of our last main result.

\begin{theorem}
{\rm {\bf [Reduction principle]}}
\label{T:reduction_principle}%{\rm{\bf [Higher-order reduction principle]}}
Let   $s \in (0,n)\setminus
\N$ and   let $A$ be a Young function.  Assume that  $Y(\rn)$ is a rearrangement-invariant space. Then, the following statements are equivalent:
\\ (i)
There exists a constant $c$ such that
\begin{equation}\label{E:i}
\|u\|_{Y(\rn)} \leq c  \big|\nabla ^{[s]}u\big|_{\{s\},A, \R^n}
\end{equation}
for every $u \in V^{s,A}_d(\rn)$.
\\ (ii)
There exists a constant $c$ such that
\begin{equation}\label{E:ii}
	 \bigg\|\int_{r}^{\infty}f(\rho)\rho^{-1+\frac{s}{n}}\,d\rho \bigg\|_{\overline{Y}(0, \infty)}
		\leq c\|f\|_{L^A(0, \infty)}
\end{equation}
 for every   $f\in L^A(0,\infty)$.
\end{theorem}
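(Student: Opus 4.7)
The direction $(i) \Rightarrow (ii)$ was already established in \cite[Lemmas 6.5 and 7.6]{ACPS_frac}: given any non-negative $f \in L^A(0,\infty)$, one constructs a radial test function $u_f \in V^{s,A}_d(\rn)$ whose decreasing rearrangement satisfies $u_f^*(r) \geq c\int_r^\infty f(\rho)\rho^{-1+s/n}\,d\rho$ while $|\nabla^{[s]} u_f|_{\{s\},A,\rn} \leq c\,\|f\|_{L^A(0,\infty)}$, and then $(i)$ applied to $u_f$ yields $(ii)$. The novel part of the theorem is the converse.

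For the implication $(ii) \Rightarrow (i)$, the plan is to produce, for each $u \in V^{s,A}_d(\rn)$, a non-negative function $f_u$ on $(0,\infty)$ that satisfies the pointwise rearrangement estimate
\begin{equation*}
u^*(r) \leq c \int_r^\infty f_u(\rho)\,\rho^{-1+s/n}\,d\rho \qquad \text{for every } r > 0,
\end{equation*}
together with the norm bound $\|f_u\|_{L^A(0,\infty)} \leq c\,|\nabla^{[s]} u|_{\{s\},A,\rn}$. Given such an $f_u$, taking the $\ol{Y}(0,\infty)$-norm of both sides of the pointwise inequality, invoking $(ii)$ with $f=f_u$, and using the identity $\|u\|_{Y(\rn)} = \|u^*\|_{\ol Y(0,\infty)}$ from \eqref{B.3} produces $(i)$ at once.

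The construction of $f_u$ would proceed in two steps. For $s \in (1,\infty)\setminus\N$, I would apply the $s \in (0,1)$ conclusion to each component of the vector-valued function $\nabla^{[s]} u$, and then combine with an integer-order Maz'ya-type rearrangement estimate that controls $u^*$ by iterated Hardy averages of $|\nabla^{[s]} u|^*$; the kernel $\rho^{-1+[s]/n}$ coming from the integer Hardy iteration times $\rho^{-1+\{s\}/n}$ coming from the fractional step should collapse, after a Fubini interchange, into the desired $\rho^{-1+s/n}$. For $s \in (0,1)$, I would revisit the arguments in \cite[Lemmas 6.5 and 7.6]{ACPS_frac} so as to extract an intrinsic construction of $f_u$ from $u$, $s$, and $n$: a dyadic decomposition of $u$ along its super-level sets, combined with pointwise control of its fractional oscillations at a continuum of scales, should yield an $f_u$ whose $L^A$-norm is bounded by $|u|_{s,A,\rn}$ for \emph{every} Young function $A$, supercritical or not.

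The main obstacle is securing this norm bound on $f_u$ uniformly over Young functions $A$, since no growth assumption on $A$ is available in Theorem~\ref{T:reduction_principle}. The half-space extension technique employed in \cite{ACPS_frac} for the subcritical embedding is tailored to $A$ satisfying \eqref{subcrit} and cannot be carried over to our generality. The bound on $\|f_u\|_{L^A(0,\infty)}$ must therefore be derived intrinsically, working directly with the Gagliardo-type double integral in \eqref{intro1}, which is expected to be the most technically delicate step of the proof.
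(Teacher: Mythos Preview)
Your treatment of $(i)\Rightarrow(ii)$ is fine and matches the paper's citation of \cite[Lemmas~6.5 and~7.6]{ACPS_frac}.

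For $(ii)\Rightarrow(i)$, however, the proposal has a genuine gap and the paper takes an entirely different route. You aim to build, for each $u\in V^{s,A}_d(\rn)$, a function $f_u$ with $u^*(r)\le c\int_r^\infty f_u(\rho)\rho^{-1+s/n}\,d\rho$ and $\|f_u\|_{L^A(0,\infty)}\le c\,\big|\nabla^{[s]}u\big|_{\{s\},A,\rn}$. You correctly flag the obstacle: the second bound must hold for \emph{every} Young function $A$, with no growth hypothesis, and the half-space extension of \cite{ACPS_frac} is unavailable outside the subcritical regime. But you supply no construction of $f_u$; you only say it ``should'' emerge from a dyadic decomposition and that this is ``expected to be the most technically delicate step''. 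Lemmas~6.5 and~7.6 of \cite{ACPS_frac} go the other way --- they build test functions $u_f$ from a given $f$, not a map $u\mapsto f_u$ --- and no pointwise rearrangement inequality of the type you need is known for fractional Orlicz seminorms. As it stands, the proposal is a programme rather than a proof.

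The paper sidesteps this difficulty by reversing the logical order: it proves the optimal embeddings \emph{first} and deduces $(ii)\Rightarrow(i)$ from them afterwards. Concretely, it splits into the two regimes. Under \eqref{subcrit}, inequality \eqref{may23} gives $V^{s,A}_d(\rn)\to L(\widehat A,\tfrac ns)(\rn)$; under \eqref{supercrit}, Theorem~\ref{teo1} (itself resting on Theorem~\ref{bound}) gives $V^{s,A}_d(\rn)\to\big(L^\infty\cap L(\widehat A,\tfrac ns)\big)(\rn)$. In either case, call the target $X_{\rm opt}(\rn)$. Applying \eqref{rednec} with $Y=X_{\rm opt}$ shows that $X_{\rm opt}(0,\infty)$ is admissible in the Hardy inequality, and the optimality theory of \cite[Theorem~1.1]{cianchi-ibero} identifies $X_{\rm opt}(0,\infty)$ as the \emph{smallest} such target. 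Hence if $(ii)$ holds for some $Y$, then $X_{\rm opt}(0,\infty)\to\overline Y(0,\infty)$, so $X_{\rm opt}(\rn)\to Y(\rn)$, and chaining with $V^{s,A}_d(\rn)\to X_{\rm opt}(\rn)$ yields $(i)$. No pointwise construction $u\mapsto f_u$ is ever made.
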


As already mentioned, the fact that embedding \eqref{E:i} implies inequality \eqref{E:ii}, which is also stated in \eqref{rednec}, was established in \cite[Lemmas~6.5 and 7.6]{ACPS_frac}. The novelty of Theorem \ref{T:reduction_principle}  is the reverse implication.

\section{Boundedness of fractional Orlicz-Sobolev functions}\label{boundedness}

This section is devoted to the proof of the following key result on the plain boundedness of functions from the space $V^{s,A}_d (\rn)$ in the supercritical regime.

\begin{theorem}
{\rm {\bf [Boundedness of fractional Orlicz-Sobolev functions]}}
\label{bound} Assume that $s$ and  $A$ satisfy  conditions \eqref{s},  \eqref{supercrit} and  \eqref{indisp}. Then,
\begin{equation}\label{inclusion}
V^{s,A}_d (\rn) \subset L^\infty(\rn).
\end{equation}
%
% If $u\in V^{s,A}_d (\rn)$, then $u \in L^\infty(\rn)$.
\end{theorem}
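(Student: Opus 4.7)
The plan is to derive the $L^\infty$-bound on $V_d^{s,A}(\rn)$ by approximating $A$ from below by a family of subcritical Young functions $B_T$, invoking the optimal subcritical Orlicz embedding \eqref{may20}--\eqref{may21} of \cite{ACPS_frac} for each $B_T$, and passing to the limit as $T\to\infty$. The key observation is that under \eqref{supercrit} the limiting Young function $A_{\frac ns}$ from Theorem~\ref{teo2} takes the value $\infty$ past a finite threshold $L$, so the subcritical targets $L^{(B_T)_{\frac ns}}(\rn)$ collapse onto $L^{A_{\frac ns}}(\rn)\subsetneq L^\infty(\rn)$.

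Assume first $s\in(0,1)$. For $T>0$ define
\begin{equation*}
B_T(t)=\begin{cases} A(t) & 0\le t\le T,\\ A(T)+a(T)(t-T) & t>T,\end{cases}
\end{equation*}
with $a$ as in \eqref{young}. Convexity of $A$ gives $B_T\le A$ on $[0,\infty)$; the linear tail of $B_T$ makes $\int^\infty(\tau/B_T(\tau))^{s/(n-s)}\,d\tau=\infty$, so $B_T$ satisfies \eqref{subcrit}; and $B_T=A$ on $[0,T]$ lets $B_T$ inherit \eqref{indisp} from $A$. The bound $B_T\le A$ yields $|u|_{s,B_T,\rn}\le|u|_{s,A,\rn}$, so \eqref{may20}--\eqref{may21} applied to $B_T$ gives
\begin{equation*}
\|u\|_{L^{(B_T)_{\frac ns}}(\rn)}\le c\,|u|_{s,A,\rn}
\end{equation*}
for every $u\in V_d^{s,A}(\rn)$; tracking constants through \cite{ACPS_frac} (or verifying the corresponding Hardy inequality \eqref{rednec}) shows that $c$ can be taken independent of $T$ and depending only on $s$ and $n$.

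Set $L=\bigl(\int_0^\infty(\tau/A(\tau))^{s/(n-s)}\,d\tau\bigr)^{(n-s)/n}$, finite by \eqref{supercrit}. Inspection of \eqref{Ans}--\eqref{H} shows that $H_{B_T}$ agrees with $H_A$ on $[0,T]$ and that, as $T\to\infty$, $(B_T)_{\frac ns}(\sigma)\to A_{\frac ns}(\sigma)$ for every $\sigma>0$, with $A_{\frac ns}(\sigma)=\infty$ for $\sigma>L$. Let $\lambda_T=\|u\|_{L^{(B_T)_{\frac ns}}(\rn)}\le c|u|_{s,A,\rn}$ and extract a subsequence along which $\lambda_T\to\lambda^*\le c|u|_{s,A,\rn}$. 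Applying Fatou's lemma to the modular inequality $\int_{\rn}(B_T)_{\frac ns}(|u|/\lambda_T)\,dx\le1$ yields $\int_{\rn}\liminf_T(B_T)_{\frac ns}(|u|/\lambda_T)\,dx\le1$. Since $(B_T)_{\frac ns}(\sigma)\to\infty$ whenever $\sigma>L$, the set $\{|u|/\lambda^*>L\}$ has zero measure, whence $|u|\le L\lambda^*$ a.e.\ in $\rn$ and $\|u\|_{L^\infty(\rn)}\le L\lambda^*\le C|u|_{s,A,\rn}$.

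For $s>1$, apply the preceding to $\nabla^{[s]}u\in V_d^{\{s\},A}(\rn)$ to obtain $\nabla^{[s]}u\in L^\infty(\rn)$, and iterate downward over $k=[s]-1,\dots,0$ using the elementary fact that any $v\in\mathcal M_d(\rn)$ whose weak gradient is bounded must itself be bounded --- otherwise Lipschitz continuity would place a ball of radius proportional to $|v(x_0)|$ inside a super-level set of arbitrarily small measure, contradicting the $\mathcal M_d$ condition. I expect the main technical difficulties to be twofold: first, verifying that the constant in \eqref{may21} can be taken independent of $B_T$, which is essential for the argument and requires a careful examination of the subcritical proofs in \cite{ACPS_frac}; second, justifying the Fatou passage across the finite/infinite dichotomy of $A_{\frac ns}$ at the threshold $\sigma=L$, where the pointwise limit of the integrand jumps from finite values to $+\infty$.
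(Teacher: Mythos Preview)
Your core strategy---approximate $A$ from below by subcritical Young functions and invoke the subcritical embedding \eqref{may20}--\eqref{may21}---matches the paper's in spirit, but the executions differ. The paper does \emph{not} attempt a uniform constant or a direct $L^\infty$ bound. Instead, for each finite-valued Young function $E$ it constructs (Lemmas~\ref{lemma2} and~\ref{lemma1}) a subcritical $B\lesssim A$ whose Sobolev conjugate $B_{\frac ns}$ dominates $E$ near infinity; this yields $V^{s,A}_d\subset L^{E_0}$ for every such $E$ (with $E$-dependent constant), whence $u\in L^\infty$ by Lemma~\ref{lemma4}. Continuity of the embedding is obtained only afterwards, via the closed graph theorem in the proof of Theorem~\ref{teo_cont}. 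The paper thus trades your quantitative bound for robustness: it never needs the constant in \eqref{may21} to be uniform in the approximating function. Your flagged concern about that uniformity is therefore exactly where the two approaches diverge; if it holds (plausible---these constants typically depend only on $n$ and $s$---but it must be checked in \cite{ACPS_frac}), your Fatou argument gives more, namely a direct norm inequality bypassing the closed graph step. Note, incidentally, that without uniformity your specific family $B_T$ is insufficient to recover the paper's route: since $B_T$ is linear at infinity, $(B_T)_{\frac ns}$ grows only like a fixed power near infinity and cannot dominate an arbitrary $E$.

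There is a genuine gap in your higher-order case. You propose to apply the $(0,1)$ argument to $\nabla^{[s]}u\in V_d^{\{s\},A}(\rn)$, but this requires the supercritical hypothesis \eqref{supercrit} with $\{s\}$ in place of $s$, and that generally \emph{fails}: since $\{s\}<s$ one has $\{s\}/(n-\{s\})<s/(n-s)$, and convergence of $\int^\infty(\tau/A(\tau))^{s/(n-s)}\,d\tau$ does not imply convergence with the smaller exponent (take $A(t)\simeq t^p$ near infinity with $\tfrac ns<p\le\tfrac{n}{\{s\}}$). The fix is simply not to split: the subcritical embedding \eqref{may20}--\eqref{may21} is stated for all $s\in(0,n)\setminus\N$, so your entire $B_T$/Fatou argument can be run directly for the original $s$, replacing $|u|_{s,B_T,\rn}\le|u|_{s,A,\rn}$ by $|\nabla^{[s]}u|_{\{s\},B_T,\rn}\le|\nabla^{[s]}u|_{\{s\},A,\rn}$. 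This is how the paper proceeds---it never reduces to $\{s\}$---and it makes your Lipschitz bootstrap unnecessary.
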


The  proof of Theorem \ref{bound} exploits the subcritical embedding \eqref{may20} to show that a function $u$ as in its statement belongs to any Orlicz space built upon a finite-valued Young function growing arbitrarily fast near infinity. This piece of information entails that, in fact, $u$ has to be essentially bounded. The   technical steps needed to implement this idea are split in some lemmas.

\begin{lemma}\label{lemma2}
Let $E$ be any finite-valued Young function. Then, there exists a continuously differentiable Young function $F$ such that
\begin{equation}\label{18}
F\geq E ,%\qquad \hbox{near infinity}\,,
\end{equation}
and
\begin{equation}\label{19}
\frac 1{F} \quad\hbox{is convex near infinity}\,.
\end{equation}
\end{lemma}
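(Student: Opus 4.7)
The plan is to produce $F$ in the exponential form $F(t) = e^{\psi(t)} - 1$, where $\psi : [0,\infty) \to [0,\infty)$ is a smooth, convex, non-decreasing function with $\psi(0) = 0$. Any such $F$ is automatically a continuously differentiable Young function: $F(0) = 0$, and $F'' = e^\psi[\psi'' + (\psi')^2] \geq 0$, $F' = \psi' e^\psi \geq 0$. Moreover, the domination $F \geq E$ reduces to the simpler requirement $\psi \geq g$, with $g(t) := \log(1 + E(t))$. A direct calculation gives
\[
2(F')^2 - F F''
\;=\; e^{2\psi}\bigl[(\psi')^2 - \psi''\bigr] + e^{\psi}\bigl[(\psi')^2 + \psi''\bigr],
\]
so $(1/F)'' \geq 0$ on any interval where $(\psi')^2 \geq \psi''$, the second summand being automatically non-negative from the convexity and monotonicity of $\psi$.

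Thus the problem reduces to building a smooth convex non-decreasing $\psi$ on $[0,\infty)$ with $\psi(0) = 0$, $\psi \geq g$, and $(\psi')^2 \geq \psi''$ for $t$ beyond some threshold. Since $E$ is finite-valued, $g$ is a finite, continuous, non-decreasing function with $g(0) = 0$. I would define $\phi := \psi'$ as a non-decreasing piecewise linear function with breakpoints $0 = t_0 < t_1 < t_2 < \cdots \to \infty$ and prescribed values $0 = M_0 \leq M_1 \leq M_2 \leq \cdots \to \infty$, and then mollify to make $\phi$ smooth. On $[t_n, t_{n+1}]$, the differential inequality $\phi' \leq \phi^2$ becomes the discrete condition $(M_{n+1} - M_n)/(t_{n+1} - t_n) \leq M_n^2$, which only constrains the \emph{rate of increase} of $\phi$. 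The majorization $\psi \geq g$ is guaranteed inductively once $M_n(t_{n+1} - t_n) \geq g(t_{n+1}) - g(t_n)$. Both can be enforced by a greedy inductive choice: given $(t_n, M_n)$, first take $M_{n+1}$ much larger than $M_n$ and than the local average slope of $g$, and then take $t_{n+1} - t_n$ large enough to respect the slope constraint while still accumulating area at least $g(t_{n+1}) - g(t_n)$ under $\phi$.

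The main obstacle is the joint feasibility of this construction for arbitrarily fast-growing $E$. The key point is that the quadratic bound $\phi' \leq \phi^2$ is lenient once $\phi$ is large, because $\phi^2$ overwhelms $\phi$: one may reach any prescribed height $M_{n+1}$ from $M_n$ in a sufficiently long interval without violating the slope constraint, and widening that interval simultaneously inflates the integral of $\phi$, which keeps $\psi \geq g$ intact. After a convolution of $\phi$ with a small bump supported in $[0, \varepsilon_n]$ (with $\varepsilon_n \to 0$ fast enough), $\phi$ becomes $C^\infty$ and the two estimates survive up to harmless multiplicative constants; then $\psi(t) = \int_0^t \phi(\tau)\,d\tau$ is the required function, and $F := e^{\psi} - 1$ satisfies all the conclusions of the lemma in view of the first paragraph.
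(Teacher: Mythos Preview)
Your reduction to $F=e^{\psi}-1$ with $\psi$ convex, non-decreasing, $\psi(0)=0$, $\psi\geq g:=\log(1+E)$, and $(\psi')^2\geq\psi''$ near infinity is exactly the paper's. The paper, however, builds $\psi$ by a convex-envelope device rather than a greedy piecewise-linear scheme: it first picks any twice continuously differentiable Young function $\varphi$ with $e^{\varphi}-1\geq E$ and $\varphi'\to\infty$, sets $\zeta:=1/\varphi'$, and defines $\psi(t)=\int_0^t d\tau/\widehat\zeta(\tau)$, where $\widehat\zeta$ is the convex envelope of $\zeta$. Since $\widehat\zeta\leq\zeta$ one gets $\psi'\geq\varphi'$, hence $\psi\geq\varphi$ and $F\geq E$; since $\widehat\zeta$ is positive, convex and tends to $0$, its derivative is negative and tends to $0$, so eventually $-\widehat\zeta\,'\leq 1$ and $\psi''=-\widehat\zeta\,'(\psi')^2\leq(\psi')^2$. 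Both constraints drop out simultaneously, with no induction to manage.

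Your inductive construction, by contrast, has a gap precisely at the feasibility step you flag as the main obstacle. With $M_n$ and $M_{n+1}$ already fixed, the area $\int_{t_n}^{t_{n+1}}\phi=\tfrac12(M_n+M_{n+1})(t_{n+1}-t_n)$ grows only \emph{linearly} as you widen the interval, whereas $g(t_{n+1})-g(t_n)$ can grow arbitrarily fast (take $E(t)=e^{e^t}-e$, so $g(t)\sim e^t$); hence the assertion that ``widening that interval\ldots keeps $\psi\geq g$ intact'' is false in general. Relatedly, the stated condition $M_n(t_{n+1}-t_n)\geq g(t_{n+1})-g(t_n)$ controls $\psi-g$ only at the breakpoints, not on the interior of $[t_n,t_{n+1}]$, and taking $M_0=0$ forces $\psi(t)=O(t^2)$ near $0$, which already violates $\psi\geq g$ whenever $E'(0)>0$. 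A piecewise-linear scheme \emph{can} be pushed through, but the natural object to make piecewise linear is $1/\phi$ (non-increasing, with slopes in $[-1,0]$, lying below $1/\varphi'$ for some smooth convex $\varphi\geq g$), and that is essentially the paper's convex-envelope construction; your argument as written neither establishes joint feasibility nor shows $t_n\to\infty$.
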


\begin{proof}
 Let  $G: [0, \infty) \to [0, \infty)$ be a function of the form
 \begin{equation}\label{20}
 G(t) = e^{\varphi (t)} -1 \qquad \hbox{for}\;\;t\geq 0\,,
 \end{equation}
 where $\varphi$ is  a twice continuously differentiable Young function such that $\varphi '(t)>0$ for $t>0$, and $\lim_{t\to \infty}\varphi ' (t) =\infty$. One can choose the function $\varphi$ in such a way that
\begin{equation}\label{may10}
G (t)\geq E(t) \quad \text{for $t \geq 0$\,.}
\end{equation}
For instance, the choice
$$\varphi(t) = \int_0^{2t}\frac 1\tau\int_0^{2\tau} \frac{E(\theta)+\theta^2} \theta\,d\theta \,d\tau \quad \text{for $t\geq0$}$$
is admissible, inasmuch as $\varphi(t) \geq E(t)+t^2>0$ for $t > 0$,   since the function $(E(t)+t^2)/t$ is non-decreasing. The latter property also ensures that the function
%\note[inline]{Lubos: Can we insert a short explanation of this inequality?}
 %Since $\varphi$ is a Young function, we have that
 $\varphi' $ is non-decreasing.\\
%\note[inline]{Lubos: It seems to me that this is obvious from the definition of $\varphi$ and the argument via Young function can be dropped.}
Define the function $\zeta : (0, \infty) \to (0, \infty)$ as
 $$\zeta (t) =\frac 1{\varphi '(t)} \qquad \hbox{for $t>0$}\,.$$
 The function $\zeta$ is positive, non-increasing and  continuously differentiable in $(0, \infty)$.  Also, $\displaystyle\lim_{t\to \infty} \zeta (t) =0$.\\
 Denote by $\widehat{\zeta }$ the convex envelope of $\zeta $. Such a function inherits properties from $\zeta $. Specifically, $\widehat{\zeta} > 0$  and  $\widehat{\zeta }$ is non-increasing  in $(0, \infty)$.  Moreover, $\displaystyle\lim_{t\to \infty}  \widehat{\zeta }(t) =0$. Inasmuch as $\zeta $ is continuously differentiable, $\widehat \zeta $ is continuously differentiable as well,
 \begin{equation}\label{23}
     \widehat{\zeta }^{\,\,'} \quad \hbox{is non-decreasing and}\quad \widehat{\zeta }^{\,\,'} <0,
 \end{equation}
 and
 \begin{equation}\label{23bis}
\lim_{t\to \infty}  \widehat{\zeta }^{\,\,'} (t) =0.
\end{equation}
 Let $\psi : [0, \infty) \to [0, \infty)$  be the function defined by
$$ \psi (t) =  \int_0^t\frac {d\tau}{\widehat{\zeta }(\tau)} \quad \text{for  $t\geq 0$}.
$$
Owing to the properties of the function  $\widehat{\zeta }$, one has that $ \psi $ is a  twice continuously differentiable Young function, and
 \begin{equation}\label{24}
 \psi '(t) = \frac 1{\widehat{\zeta }(t)}\qquad \hbox{for $t>0$}\,.
  \end{equation}
 Since $\widehat{\zeta } \leq \zeta $, we have that
 \begin{equation}\label{25}
 \psi(t) \geq \varphi (t) \qquad \hbox{for $t\geq 0$}\,.
  \end{equation}
  Moreover,
  \begin{equation}\label{26}
   \psi '' (t) = -\, \frac{ \widehat{\zeta }^{\,\,'}(t)}{\widehat{\zeta }(t)^2} = -\, \widehat{\zeta }^{\,\,'}(t) \, \psi ' (t)^2 \leq \psi ' (t)^2
 \qquad \hbox{near infinity,}
  \end{equation}
  where the last inequality holds by property \eqref{23bis}.
 % \todo[inline]{Angela: It seems to me that [26] and [22] assert  the same thing.
%Has [22] to be removed? }
%Hence,
%\begin{equation}\label{22}
%\psi '' \leq (\psi')^2\qquad \hbox{near infinity.}
%\end{equation}
\\
Now, define the Young function $F$ as
$$ F(t) =e^{\psi(t)} -1 \quad \hbox{for}\;\;t\geq 0.
$$
Owing to equations \eqref{20}, \eqref{may10} and  \eqref{25}, one has that
$$F(t) \geq E(t) \quad \text{for $t \geq 0$,}$$
whence property \eqref{18} holds.
%  \\ Inequality \eqref{22} thus holds with $\varphi$ replaced by $\psi$. Thus, given $E$, one can choose $\varphi$ such that
%  $$G \geq E  \qquad \hbox{near $\infty$}\,.$$
\\
We claim that property \eqref{19} is also fulfilled. Indeed, the latter is equivalent to
$$  \left(\frac 1{F}\right)'' \geq 0 \quad\hbox{near infinity,}$$
and this is in turn equivalent to
$$2\,(F')^2 - F\, F''\geq 0 \quad\hbox{near infinity,}$$
namely
\begin{equation}\label{may18}
\frac {F}{F '} \leq \frac{2F'}{F^{''}} \quad \hbox{near infinity.}
\end{equation}
 Since
 \begin{equation*}
    \frac{F}{F'}  = \frac 1{\psi '} - \frac 1{\psi ' \, e^{\psi}}\leq \frac 1{\psi'}
 \end{equation*}
 and
 \begin{equation*}
    \frac{F'}{F''}  = \frac {\psi'}{\psi '' + (\psi')^2},
 \end{equation*}
inequality \eqref{may18} follows from the fact that, by equation \eqref{26},
 $$\frac 1{\psi '} \leq \frac{2\, \psi'}{\psi '' + (\psi ')^2}\quad \hbox{near infinity.}$$
\end{proof}
%
%
%\par\noindent
%We have that
% \begin{align}\label{21}
% &\frac 1{\Psi} \quad \hbox{is convex near infinity} \quad \Longleftrightarrow \quad  \left(\frac 1{\Psi}\right)^{''} \geq 0 \quad\hbox{near $\infty$}
% \\&
% \quad \Longleftrightarrow \quad 2\,(\Psi')^2 - \Psi \, \Psi^{''} \geq 0 \quad\hbox{near $\infty$}
% \quad \Longleftrightarrow \quad
% \frac {\Psi}{\Psi '} \leq 2\, \frac{\Psi '}{\Psi ^{''}} \quad \hbox{near $\infty$}\,.\nonumber
% \end{align}
% Since
% \begin{equation*}
%    \frac{\Psi}{\Psi '}  = \frac 1{\varphi '} - \frac 1{\varphi ' \, e^{\varphi}}\leq \frac 1{\varphi'}
% \end{equation*}
% and
%
% \begin{equation*}
%    \frac{\Psi'}{\Psi ^{''}}  = \frac {\varphi '}{\varphi ^{''} + (\varphi ')^2}\,,
% \end{equation*}
% property \eqref{21} will follow if $\varphi$ fulfils
% $$\frac 1{\varphi '} \leq \frac{2\varphi '}{\varphi^{''} + (\varphi ')^2}\,,$$
% namely
%\begin{equation}\label{22}
%\varphi ^{''} \leq (\varphi')^2\qquad \hbox{near $\infty$}\,.
%\end{equation}
%
%
%
%
%  On replacing $\varphi$ by $\psi$, thanks to \eqref{25}, we have that the function $\overline{\Psi}$, defined by
%  $$\overline{\Psi}(t)= e^{\overline{\varphi}(t)} - 1 \,,$$
%  also satisfies
% $$ \overline{\Psi} \geq E \qquad \hbox{near $\infty$}\,,$$
% and, owing to \eqref{26}, also \eqref{21}.

\begin{lemma}\label{lemma1}
Assume that $s$ and  $A$ satisfy  conditions \eqref{s},  \eqref{supercrit} and  \eqref{indisp}.
Let $E$ be any finite-valued Young function. Then, there exists a finite-valued Young function $B$ such that
\begin{align}
&\hbox{$B \lesssim A$  globally and $A\simeq B$ near $0$\,;}\label{2}
\\ \nonumber
\\
&\hbox{$\displaystyle\int^\infty \left( \displaystyle\frac t{B(t)} \right)^{\frac s{n-s}} \; dt = \infty $\,;}\label{3}
\\ \nonumber
\\
&\hbox{$E \lesssim B_{\frac ns}$ near infinity.}
%
%$B_{\frac ns}$ dominates $E$ near $\infty$ (in the sense of Young function)\,.}
\label{4}
\end{align}
%\begin{enumerate}
%\item[(i)]{$A$ dominates $B$ globally and $A\approx B$ near $0$;}
%\medskip
%
%    \item[(ii)]{$\displaystyle\int^\infty \left( \displaystyle\frac t{B(t)} \right)^{\frac s{n-s}} \; dt = \infty $;}
%\medskip
%
%        \item[(iii)]{$B_{\frac ns}$ dominates $E$ near $\infty$ (in the sense of Young function).}
%\medskip
%\end{enumerate}
Here, $B_{\frac ns}$ denotes the Sobolev conjugate of $B$, defined as in \eqref{Ans}--\eqref{H}.
\end{lemma}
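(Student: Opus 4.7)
The plan is to build $B$ as a piecewise-linear modification of $A$: on an initial interval $[0, s_0]$ we set $B = A$, and on each successive interval $[s_k, s_{k+1}]$ of a rapidly increasing sequence $\{s_k\}$, we let $B$ grow linearly with slope $a(s_k)$, where $a$ denotes the right derivative of $A$. Since $a$ is non-decreasing, the slopes of $B$ form a non-decreasing sequence, so $B$ is convex; the conditions $B(0) = 0$, left-continuity, and finite-valuedness are immediate; and $B \le A$ pointwise on $[0, \infty)$, by convexity of $A$ combined with an easy induction showing $B(s_k) \le A(s_k)$. Hence $B$ is a finite-valued Young function that coincides with $A$ on $[0, s_0]$, which gives condition \eqref{2} and the integrability condition \eqref{indisp} near zero for free.

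For the subcriticality property \eqref{3}, a change of variables in the integral $\int_{s_k}^{s_{k+1}}(\tau/B(\tau))^{\frac{s}{n-s}}\,d\tau$, using the linearity $B(\tau) = B(s_k) + a(s_k)(\tau - s_k)$ on that interval, shows that it is comparable to $L_k\, a(s_k)^{-\frac{s}{n-s}}$ (where $L_k := s_{k+1} - s_k$), once the ratio $L_k/s_k$ is large enough. Taking $L_k \asymp a(s_k)^{\frac{s}{n-s}}$ then makes each such integral comparable to a positive constant independent of $k$, and the total sum divergent; the requirement that $L_k/s_k$ be large is guaranteed by the asymptotic estimate $a(s) \gtrsim s^{\frac{n-s}{s}}$, itself a direct consequence of the supercritical growth \eqref{supercrit} of $A$.

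For the decisive property \eqref{4}, I would invoke the identity $B_{\frac ns}(H_B(t)) = B(t)$. Setting $r_k := H_B(s_k)$, the preceding computation gives $r_k^{\frac{n}{n-s}} \asymp c_0 + k$, where $c_0 := \int_0^{s_0}(\tau/A(\tau))^{\frac{s}{n-s}}\,d\tau$, while the telescoping recursion $B(s_{k+1}) = B(s_k) + a(s_k)L_k$ yields $B(s_k) \gtrsim a(s_{k-1})^{\frac{n}{n-s}}$. The inductive rule for choosing $\{s_k\}$ is then to take $s_{k+1}$ large enough at each step so that both $L_k \asymp a(s_k)^{\frac{s}{n-s}}$ and the additional requirement $a(s_{k+1})^{\frac{n}{n-s}} \ge E(r_{k+2})$ hold; the latter is feasible because $a(s) \to \infty$ as $s \to \infty$ (another consequence of supercriticality), while $E$ is a prescribed, finite-valued Young function. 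Monotonicity of $B_{\frac ns}$ and of $E$ then yields $E(r) \le E(r_{k+1}) \le B_{\frac ns}(r_k) \le B_{\frac ns}(r)$ for every $r \in [r_k, r_{k+1}]$, establishing \eqref{4}.

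The main obstacle I anticipate is the double role that the breakpoints $\{s_k\}$ are asked to play: the lengths $L_k$ must be large enough that the subcritical integral diverges, while the positions $s_k$ must be pushed high enough that the slopes $a(s_k)$, which via the recursion govern the growth of $B_{\frac ns}$, keep pace with the prescribed $E$. These constraints pull in opposite directions, but they are reconcilable because supercriticality of $A$ simultaneously guarantees $a(s) \to \infty$ rapidly and $a(s)^{\frac{s}{n-s}} \gtrsim s$, which is exactly the margin required to accommodate both constraints in the inductive construction.
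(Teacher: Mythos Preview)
Your piecewise-linear construction of $B$ correctly gives condition \eqref{2}, and your estimate $\int_{s_k}^{s_{k+1}}(\tau/B(\tau))^{\frac s{n-s}}\,d\tau \approx L_k\,a(s_k)^{-\frac s{n-s}}$ (for $L_k\gg s_k$) is sound. The gap lies in the inductive step where you ask $s_{k+1}$ to satisfy simultaneously $L_k \asymp a(s_k)^{\frac s{n-s}}$ and $a(s_{k+1})^{\frac n{n-s}} \ge E(r_{k+2})$. The first condition is a \emph{two-sided} bound and therefore caps $s_{k+1}$ at roughly $s_k + C\,a(s_k)^{\frac s{n-s}}$; once imposed, the whole sequence $\{s_k\}$---and hence the values $a(s_k)$---is essentially determined by $A$ and the starting point $s_0$, leaving no freedom. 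The second condition then demands that $a(s_{k+1})^{\frac n{n-s}}$ dominate $E(r_{k+2})\asymp E\big(c(k+2)^{\frac{n-s}{n}}\big)$, a comparison between a quantity fixed by $A$ and an \emph{arbitrary} finite-valued Young function $E$. This fails in general. Concretely, take $A(t)\sim t^{p}$ near infinity with $p>\tfrac ns$: then $s_{k}\approx s_0^{\alpha^{k}}$ with $\alpha=\tfrac{(p-1)s}{n-s}>1$, so $a(s_{k+1})^{\frac n{n-s}}$ grows like $\exp(C\alpha^{k})$; but if $E(t)=\exp(\exp(\exp t))$, then $E\big(c(k+2)^{\frac{n-s}{n}}\big)\ge\exp\big(\exp(c'k^{\frac{n-s}{n}})\big)$, and after two logarithms one is comparing the linear quantity $k\log\alpha$ with $\exp(c'k^{\frac{n-s}{n}})$, which eventually dominates. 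Your closing assertion that supercriticality of $A$ ``is exactly the margin required'' is therefore unfounded: supercriticality constrains $A$, not the arbitrary $E$.

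The paper sidesteps this by working on the conjugate side. It uses the alternative representation $B_{\frac ns}(t)\simeq\big(t\,M^{-1}(t^{\frac n{n-s}})\big)^{\frac n{n-s}}$, where $M(t)=\int_0^t b^{-1}(\tau)\,\tau^{-\frac n{n-s}}\,d\tau$, sets $b^{-1}=a^{-1}+\eta$, and chooses $\eta$ \emph{explicitly} so that $\int_{t_0}^t\eta(\tau)\tau^{-\frac n{n-s}}\,d\tau=\tfrac12 F^{-1}(t^{\frac s{n-s}})-\text{const.}$, where $F\ge E$ is a regularization of $E$ (supplied by Lemma~\ref{lemma2}) with $1/F$ convex near infinity. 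This convexity is exactly what makes the resulting $\eta$ non-decreasing, so that $B$ is a genuine Young function; the bound $M(t)\le F^{-1}(t^{\frac s{n-s}})$ then yields \eqref{4} directly, while $M(t)\to\infty$ gives \eqref{3}. The essential difference is that $E$ is built into the definition of $B$ from the outset, rather than hoping that a sequence governed solely by $A$ will outpace $E$ after the fact.
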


\begin{proof}  Let $a$ be the function from equation \eqref{young}, and let $b$ be the function appearing in a parallel equation with $A$ replaced by $B$.
By \cite[Lemma 2.3]{cianchi-ibero}, assumption \eqref{indisp} is equivalent to
\begin{equation}\label{1'}
\int_0 \frac{a^{-1}(t)}{t^{\frac{n}{n-s}}} \; dt < \infty ,
%\quad \left(\Longleftrightarrow \int^\infty \frac{\widetilde{A}(t)}{t^{1+
%\frac{n}{n-s}}
%}\; dt <\infty \right)
\end{equation}
and condition \eqref{3} is equivalent to
\begin{equation}\label{3'}
\int^\infty \frac{b^{-1}(t)}{t^{{\color{black}\frac{n}{n-s}}}} \; dt = \infty .
%\quad \left(\Longleftrightarrow \int^\infty \frac{\widetilde{B}(t)}{{\color{black}t^{1+{\frac{n}{n-s}}}}}\; dt =\infty \right)\,,
\end{equation}
We make use of an equivalent expression for the function $B_{{\color{black}\frac ns}}$, which tells us that
\begin{equation*}
B_{{\color{black}\frac ns}} (t)\simeq \left( t \, M ^{-1} \big(t^{\color{black}\frac n{n-s}}\big)\right)^{{\color{black}\frac n{n-s}}}\quad \text{for $t\geq 0$,}
\end{equation*}
where the function $ M : [0, \infty) \to [0, \infty)$ is defined by
\begin{equation*}
 M (t) = \int_0^t \frac{b^{-1}(\tau)}{\tau^{{\color{black}\frac n{n-s}}}}\;d\tau \quad \text{for $t\geq 0$,}
\end{equation*}
see \cite[Lemma 2]{cianchi_pacific}.
%\todo[inline]{Lenka: Should we assume that the integral above is finite when $b$ is replaced by $a$? This seems to be used and I don't see how it would a priori be satisfied, unless we assume it throughout without specifically mentioning it each time.}
%\todo[inline]{Lenka: Use a different variable than $s$?}
Thus, it suffices to produce a finite-valued Young function $B$ satisfying conditions \eqref{2} and \eqref{3}, and the following inequality, equivalent to \eqref{4}:
\begin{equation}\label{5}
  \left( t \, M ^{-1} \big(t^{{\color{black}\frac n{n-s}}}\big)\right)^{{\color{black}\frac n{n-s}}}\geq E(c\, t) \qquad \hbox{near infinity}
\end{equation}
for some positive constant  $c$.
\\ Now, observe that, if $F$ is a
 {\color{black} finite-valued} Young function  such that
\begin{equation}\label{7}
F(t) \geq E(t) \qquad \hbox{near infinity},
\end{equation}
then
\begin{equation*}
  t^{{\color{black}\frac n{n-s}}}\, F\big(t^{{\color{black}\frac n{n-s}}}\big)^{{\color{black}\frac ns}} \geq E(t)\qquad  \hbox{near infinity}\,.
\end{equation*}
Thereby,
 inequality \eqref{5} will follow if we show that
\begin{equation}\label{6}
      \left( t \, M  ^{-1} \big(t^{{\color{black}\frac n{n-s}}}\big)\right)^{{\color{black}\frac n{n-s}}} \geq t^{{\color{black}\frac n{n-s}}}\,  F\big(t^{{\color{black}\frac n{n-s}}}\big)^{{\color{black}\frac ns}} \qquad \hbox{near infinity}
\end{equation}
for some Young function $F$ satisfying inequality \eqref{7}.
\\ By Lemma \ref{lemma2}, there exists  a continuously differentiable Young function $F$ for which properties \eqref{7} and \eqref{19} hold.
%such that the function
%\begin{equation}\label{8}
%\frac 1{F(t)} \qquad \hbox{is convex   near infinity.}
%\end{equation}
Therefore, it suffices to exhibit a Young function $B$ satisfying properties \eqref{2}, \eqref{3} and \eqref{6}   for some Young function $F$ fulfilling \eqref{19}.
To this purpose,
recall that
\begin{equation}\label{9}
  \widetilde{A}(t)=\int_0^t a^{-1} (\tau)\;d\tau \quad  \quad \text{and}  \quad \quad \widetilde{B}(t)=\int_0^t b^{-1} (\tau)\;d\tau \quad \text{for $t\geq 0$.}
\end{equation}
We define the function $B$ via \eqref{9}, with
$$
 b^{-1}(t) = a^{-1}(t) + \eta (t) \qquad \text{for } t\geq 0\,,
$$
where the function $\eta : [0, \infty) \to [0, \infty) $ is such that
{\color{black}\begin{equation}\label{12}
\eta(t) \, \hbox{is non-decreasing,  and} \,\, \eta(t)=0 \,\, \text{near $0$.}
\end{equation}}
This choice ensures that condition \eqref{2} is fulfilled, for this condition is equivalent to
{\color{black}\begin{equation}\label{10}
  \text {$\widetilde{A} \lesssim \widetilde{B}\quad$ globally and  $\,\,\widetilde{A}  \simeq  \widetilde{B}\quad$ near $0$.}
\end{equation}}
In order that $B$  satisfies the remaining desired conditions, one can require that the function $\eta$ also  obeys
\begin{equation}\label{13}
\begin{cases}
\eta(t) =0 & \quad \text{if $0\leq t \leq t_0$}
\\ 	\\
 \displaystyle \int_{t_0}^t \frac{\eta(\tau)}{\tau^{{\color{black}\frac n{n-s}}}} \; d\tau = \frac 12 \, F^{-1}\big( t^{{\color{black}\frac s{n-s}}}\big) - \frac 12 \,F^{-1}\big( t_0^{{\color{black}\frac s{n-s}}}\big)&  \quad \text{if $t >t_0$}
\end{cases}
\end{equation}
for some $t_0>0$ to be chosen later.
\\
Indeed, equation  \eqref{13} implies  condition \eqref{3'}, and hence \eqref{3}, since
\begin{equation}\label{14}
\int_{t_0}^{\infty} \frac{b^{-1} (\tau)}{\tau^{{\color{black}\frac n{n-s}}}} \; d\tau =
\int_{t_0}^{\infty} \frac{a^{-1} (\tau)}{\tau^{{\color{black}\frac n{n-s}}}} \; d\tau +
\int_{t_0}^{\infty} \frac{\eta (\tau)}{\tau^{{\color{black}{\frac n{n-s}}}}} \; d\tau \geq \lim_{t\to \infty}\; \frac 12 \, F^{-1}\Big(t^{{\color{black}\frac s{ n-s}}}\Big) {\color{black}- \frac 12 \,F^{-1}\big( t_0^{\frac s{n-s}}\big)=\infty}\,.
\end{equation}
Moreover, equations \eqref{13}, \eqref{14} and \eqref{1'} entail that
\begin{equation}\label{15}
M  (t) =
\int_{0}^t \frac{a^{-1} (\tau)}{\tau^{{\color{black}{\frac n{n-s}}}}} \; d\tau +
\int_0^t \frac{\eta (\tau)}{\tau^{{\color{black}{\frac n{n-s}}}}} \; d\tau \leq
F^{-1}\Big(t^{{\color{black}\frac s{ n-s}}}\Big) \qquad \hbox{near infinity}\,,
\end{equation}
whence
\begin{equation}\label{16}
M^{-1}  (t) \geq F(t)^{{\color{black}\frac{n-s}{s}}} \qquad \hbox{near infinity}\,.
\end{equation}
Raising both sides of inequality \eqref{16} to the power {\color{black}$\frac n{n-s}$}, and multiplying through the resultant  inequality by $t^{{\color{black}\frac n{n-s}}}$ yield \eqref{6}.
\\
Thus, it  is only left  to show that a function $\eta$ fulfilling \eqref{12} and \eqref{13} does exist.  Equation \eqref{13} is equivalent to
\begin{equation}\label{17}
\eta(t)= \begin{cases} 0 & \quad \text{if $0\leq t \leq t_0$}
\\  \\
  \displaystyle\frac {s}{2(n-s)}\, \frac {t^{\frac {2s}{n-s}}}{F'\Big(F^{-1}\big ( t^{\frac s{n-s}}\big)\Big)} & \quad \text{if $t > t_0$.}
\end{cases}
\end{equation}
On the other hand, equation \eqref{12} will  be verified by showing that the function $\eta$ is non-decreasing for sufficiently large $t_0$. In view of equation \eqref{17}, this property will follow if we show that
\begin{equation}\label{may6}
\frac{F ^2}{F'} \quad \text{is non-decreasing near infinity.}
\end{equation}
Property \eqref{may6} is trivially equivalent to the fact that $\frac{F'}{F^2}$ is non-increasing  near infinity, namely to the fact that $ \big(-\frac1{F} \big)'$ \,\, is non-increasing  near infinity. The latter property is in turn equivalent to the concavity of the function $-\frac1{F}$ near infinity, and, hence, to the convexity of the function $\frac1{F}$ near infinity.  This is true, thanks to equation \eqref{19}.
%On the other hand,  the following assertions are equivalent:
%\begin{center} $\displaystyle \frac{F (s)^2}{F'(s)}$ \,\,  is non decreasing near infinity \end{center}
%%\begin{center} if and only if \end{center}
%\begin{center}$\displaystyle \frac{F' (s)}{F(s)^2}$  \,\, is non increasing  near infinity\end{center}
%%\begin{center} if and only if \end{center}
%\begin{center} $\displaystyle  \left( -\frac1{F(s)} \right)'$ \,\, is non increasing  near infinity\end{center}
%%\begin{center} if and only if \end{center}
%\begin{center} $  \displaystyle-\frac1{F(s)}$   \,\, is concave  near infinity\end{center}
%%\begin{center} if and only if \end{center}
%\begin{center} $ \displaystyle\frac1{F(s)}$  \,\, is convex  near infinity.\end{center}
%Hence, the conclusion follows, since the last assertion is true, thanks to equation \eqref{19}.
%%
%\begin{align*}
%   & \begin{center}\hbox{$\frac{F (s)^2}{F'(s)}$ \,\,  is non decreasing}\end{center}
%\\ & \hbox{if and only if}
%\\ &
%    \hbox{$\frac{F' (s)}{F(s)^2}$  \,\, is non increasing}
%		\\ &\;\; \text{if and only if}
%    \left( -\frac1{\Psi(s)} \right)'\,\, \text{is non increasing} \;\; \text{if and only if}
%     -\frac1{\Psi(s)} \,\, \text{is concave} \;\; \text{if and only if}
%     \frac1{\Psi(s)} \,\, \text{is convex}
%\end{align*}
%and this holds thanks to \eqref{8}.
\end{proof}

\begin{lemma}\label{lemma4}
Let $u \in \mathcal M(\rn)$ be  such that $u\in L^E (\rn)$ for every finite-valued Young function $E$ vanishing near $0$. Then, $u\in L^\infty (\rn)$.
\end{lemma}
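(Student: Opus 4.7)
\textbf{Proof plan for Lemma \ref{lemma4}.}

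I will argue by contradiction: assume $u \notin L^\infty(\rn)$ and construct a finite-valued Young function $E$, vanishing near $0$, for which $\|u\|_{L^E(\rn)} = \infty$. This will contradict the standing hypothesis and so force $u \in L^\infty(\rn)$.

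First, a preliminary reduction on the distribution function $\mu_u(t) := |\{x \in \rn : |u(x)| > t\}|$. Since $\lim_{t\to\infty}\mu_u(t) = |\{|u| = \infty\}|$, if this limit were positive, the choice $E_0(t) = (t-1)_+$---a finite-valued Young function vanishing on $[0,1]$---would give $E_0(|u|/\lambda) = \infty$ on a set of positive measure for every $\lambda > 0$, whence $\|u\|_{L^{E_0}(\rn)} = \infty$, contradicting the hypothesis. Thus $\mu_u(t) \to 0$ as $t \to \infty$, while the assumption $u \notin L^\infty(\rn)$ forces $\mu_u(t) > 0$ for every $t > 0$. In particular, $M_k := 1/\mu_u(k(k+1))$ is, for each $k \in \N$, a finite strictly positive number, and $(M_k)$ is non-decreasing and tends to infinity, because $\mu_u$ is non-increasing and infinitesimal at infinity.

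Now I tailor $E$ to $\mu_u$. Define
\begin{equation*}
E'(s) = 0 \;\;\text{on $[0,1)$}, \qquad E'(s) = M_k \;\;\text{on $[k,k+1)$ for $k\ge 1$}, \qquad E(t) = \int_0^t E'(s)\,ds.
\end{equation*}
Since $E'$ is non-negative, non-decreasing, bounded on every compact interval and left-continuous, $E$ is a continuous, convex, finite-valued Young function that vanishes on $[0,1]$. Applying Fubini (the layer-cake formula), for every $\lambda > 0$,
\begin{equation*}
\int_{\rn} E(|u|/\lambda)\,dx = \int_0^\infty E'(t)\,\mu_u(\lambda t)\,dt = \sum_{k=1}^\infty M_k \int_k^{k+1} \mu_u(\lambda t)\,dt \;\ge\; \sum_{k=1}^\infty M_k\,\mu_u(\lambda(k+1)),
\end{equation*}
where the last step uses monotonicity of $\mu_u$. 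For every $k \ge \lceil\lambda\rceil$ one has $\lambda(k+1) \le k(k+1)$, so $\mu_u(\lambda(k+1)) \ge \mu_u(k(k+1)) = 1/M_k$, and thus each term from index $\lceil\lambda\rceil$ onward is at least $1$. The series therefore diverges, giving $\int_{\rn} E(|u|/\lambda)\,dx = \infty$ for every $\lambda > 0$. This means $\|u\|_{L^E(\rn)} = \infty$, contradicting $u \in L^E(\rn)$.

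The main obstacle in the argument is that a \emph{single} Young function $E$ must defeat every normalising scale $\lambda$ simultaneously; for each fixed $\lambda$ one would like $E'(t)$ to grow at least like $1/\mu_u(\lambda t)$, but this is a $\lambda$-dependent requirement and $\mu_u$ can decay arbitrarily fast. The device that resolves this is the choice of the threshold $k(k+1)$ (rather than $k$) in the definition of $M_k$: the extra factor $k$ provides a reserve absorbing any fixed $\lambda$ as soon as $k \ge \lceil\lambda\rceil$, which is the only place where the diagonal nature of the construction enters.
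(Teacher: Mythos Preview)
Your argument is correct and follows the same contradiction scheme as the paper, but the execution differs in one substantive way. The paper first upgrades the hypothesis: from $u\in L^{E}(\rn)$ for every admissible $E$ it deduces, via the comparison $E(t)=F(t^{2})$, that in fact $\int_{\rn}F(|u|)\,dx<\infty$ with the fixed scale $\lambda=1$ for every admissible $F$. Having eliminated $\lambda$, the contradicting function is then simply $F'(t)=1/\mu(t)$ on $(t_{0},\infty)$, and the endgame is one line: $\int_{\rn}F(|u|)\,dx=\int_{t_{0}}^{\infty}F'(t)\,\mu(t)\,dt=\int_{t_{0}}^{\infty}dt=\infty$. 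You instead bypass that reduction and build the $\lambda$-robustness into the construction of $E$ itself through the super-linear threshold $k(k+1)$, so that a single $E$ defeats every scale simultaneously. Both devices solve the same problem---absorbing the free normalising constant in the Luxemburg norm---and each is legitimate; the paper's route separates concerns and produces a cleaner final identity, while yours is a more self-contained one-shot construction. Two harmless slips: your $E'$ as written is right-continuous rather than left-continuous, which does not affect $E$ being a Young function since $E$ itself is continuous; and in the preliminary reduction the claim ``$E_{0}(|u|/\lambda)=\infty$ on a set of positive measure'' need not hold when $\mu_{u}(t)\not\to0$ but $|u|<\infty$ a.e.---what you actually need (and have) there is $\int_{1}^{\infty}\mu_{u}(\lambda t)\,dt=\infty$.
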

\begin{proof}
Our assumption on the function $u$ ensures that
\begin{equation}\label{may1}
\int_{\rn} F(|u|)\, dx < \infty
\end{equation}
for every finite-valued Young function $F$ vanishing near $0$. To verify this assertion, observe that, given any function $F$ with this property, the function $E$ defined as
$$E(t) = F(t^2) \quad \text{for $t \geq 0$}$$
is also a Young function vanishing near $0$, and
\begin{equation}\label{may2}
\lim_{t \to \infty}\frac{E(\lambda t)}{F(t)} = \infty
\end{equation}
for every $\lambda >0$. Indeed, by property \eqref{kt},
$$
    \frac{E(\lambda t)}{F(t)} = \frac{F({\color{black}{\lambda^2}} t^2)}{F(t)}\geq \frac{{\color{black}{\lambda^2}} t F(t)}{F(t)}= {\color{black}{\lambda^2}} t\quad\quad \text{if $t\geq \frac 1{\color{black}{\lambda^2}}$.}
$$
Since $u \in  L^E (\rn)$,  there exists $\lambda >0$ such that
\begin{equation}\label{may3}
\int_{\rn} E(\lambda |u|)\, dx < \infty.
\end{equation}
Inasmuch as the function $E$ is finite-valued, there exists $t_0>0$ such that $|\{|u|>t\}|<\infty$ for  $t\geq t_0$, and property \eqref{may1} follows via equations \eqref{may2} and \eqref{may3}.
\\ Now, assume, by contradiction, that $u\notin L^\infty (\rn)$. Thus, the function $\mu : (0, \infty) \to [0,\infty)$, defined as
$$\mu(t) = |\{|u|>t\}| \quad \text{for $t>t_0$,}$$
is  non-increasing and such that
$$\mu (t) >0 \quad \text{for $t>t_0$.}$$
Hence, the function $F: [0, \infty) \to [0,\infty)$, given by
$$F(t) = \begin{cases} 0 &\quad \text{if $0 \leq t \leq t_0$}
\\
\\
\displaystyle\int_{t_0}^t\frac{d\tau}{\mu(\tau)}  &\quad \text{if $t>t_0$,}
\end{cases}
$$
is a finite-valued Young function vanishing in  $[0,t_0]$. Also,
\begin{equation}\label{may4}
\int_{\rn} F(|u|)\, dx = \int_0^\infty F'(t) \mu(t) \, dt = \int_{t_0}^\infty dt = \infty,
\end{equation}
a contradiction to equation \eqref{may1}.
\end{proof}

\begin{proof}[Proof of Theorem \ref{bound}]
Fix any finite-valued Young function $E$, and let $B$ be any Young function as in the statement of Lemma \ref{lemma1}. Define a Young function $E_0$ in such a way that
\begin{equation}\label{4.11}
E_0(t) =
\begin{cases}
 0 & \hbox{near $0$}
 \\
 E(t) & \hbox{near infinity}\,.
 \end{cases}
 \end{equation}
By equation \eqref{4},
\begin{equation*}
  B_{\frac ns} \quad \hbox{dominates} \quad E_0 \;\; \hbox{globally}  \,.
\end{equation*}
Hence,
\begin{equation}\label{4.12}
L^{B_{\frac ns}}(\rn)\to L^{E_0}(\rn) \,.
\end{equation}
Thanks to embedding \eqref{may20} with $A$ replaced by $B$,
\begin{equation}\label{4.13}
V_d^{s,B} (\rn)\to L^{B_{\frac ns}}(\rn) \,.
\end{equation}
On the other hand, property \eqref{2} implies that
\begin{equation}\label{4.14}
V_d^{s,A} (\rn)\to V_d^{s,B} (\rn) \,.
\end{equation}
Combining embeddings \eqref{4.12}-\eqref{4.14} yields:
\begin{equation}\label{4.15}
V_d^{s,A} (\rn)\to L^{E_0} (\rn) \,.
\end{equation}
 Owing to the arbitrariness of the Young function $E$, the latter embedding entails inclusion \eqref{inclusion}, via Lemma \ref{lemma4} .
%\begin{equation}\label{4.16}
%V_d^{s,A} (\rn) \subset L^\infty (\rn) \,.
%\end{equation}
\end{proof}

\section{Smooth approximation}\label{smooth}

An approximation argument by continuous functions is needed in the proof of embedding \eqref{cont} into the space of continuous functions.  Approximation for functions   $u \in V_d^{s,A}(\rn)$ in the  seminorm $\big|\nabla ^{[s]} u\big|_{\{s\},A,\rn}$ is only possible under the additional assumption that the Young function $A$ satisfies the $\Delta_2$-condition, an assumption which is not required in our results. By contrast,
the approximation theorem to be established in this section shows that modular approximation, namely approximation of $\nabla ^{[s]} u$ with respect to the functional $J_{\{s\},A}$ defined as in \eqref{intro1}, is possible for every finite-valued Young function $A$.

\begin{theorem}
{\rm {\bf [Modular smooth approximation]}}
 \label{prop2}  Let $s\in (0, {\color{black}\infty})\setminus \N$ and let $A$ be a finite-valued Young function. Let  $J_{\{s\},A}$ be the   functional defined as in  \eqref{intro1}.
%Denote
%\begin{equation}\label{10}
%\Phi_{A,s}(u)=  \displaystyle \int_{\rn}\int_{\rn} A\left(\frac{|\nabla^{[s]}u(x) - \nabla^{[s]} u(y)|}{|x-y|^{\{s\}}}\right)\, \frac{dx\,dy}{|x-y|^n}
%\end{equation}
%\medskip
%for those $u\in V_d^{s,A}(\rn)$, for which the expression on the right hand side makes sense.
%\todo[inline]{Lenka: Doesn't this make sense for any $u\in V_d^{s,A}(\rn)$? The integral may be infinite but I think it still makes sense.}
Assume that  $u\in V_d^{s,A}(\rn)$. Then,  there exist $\lambda >0$ and a sequence $\{u_j\} \subset C^\infty(\rn)$ such that
\begin{equation}\label{11}
\lim_{j\to \infty} J_{\{s\},A}\left (\frac{\nabla^{[s]} u_j -\nabla^{[s]} u}{\lambda}\right) =0\,.
\end{equation}
\end{theorem}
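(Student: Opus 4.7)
The plan is to construct the approximating sequence by standard mollification. Fix a family $\{\rho_\varepsilon\}_{\varepsilon>0}$ of non-negative, radial, compactly supported $C^\infty$ mollifiers with $\int_{\rn} \rho_\varepsilon\,dz = 1$, and set $u_j := u * \rho_{1/j}$. Since $u$ is $[s]$-times weakly differentiable, it is in particular locally integrable, so $u_j \in C^\infty(\rn)$ and the classical identity $\nabla^{[s]}u_j = (\nabla^{[s]}u) * \rho_{1/j}$ holds componentwise. Writing $v := \nabla^{[s]}u$, the theorem reduces to the claim that there exists $\lambda > 0$ with
\begin{equation*}
J_{\{s\}, A}\Big(\tfrac{v*\rho_\varepsilon - v}{\lambda}\Big) \longrightarrow 0 \qquad \text{as } \varepsilon \to 0^+.
\end{equation*}

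Next, I would choose $\lambda_0 > 0$ with $J_{\{s\},A}(v/\lambda_0) < \infty$, set $\lambda := 2\lambda_0$, and define the non-negative integrands
\begin{equation*}
\tilde g(x,y) := \frac{1}{|x-y|^n}\, A\!\bigg(\frac{|v(x)-v(y)|}{\lambda_0 |x-y|^{\{s\}}}\bigg), \quad F_\varepsilon(x,y) := \frac{1}{|x-y|^n}\, A\!\bigg(\frac{|(v*\rho_\varepsilon - v)(x) - (v*\rho_\varepsilon - v)(y)|}{\lambda |x-y|^{\{s\}}}\bigg),
\end{equation*}
so that $\tilde g \in L^1(\rn\times\rn)$ and $J_{\{s\},A}((v*\rho_\varepsilon-v)/\lambda) = \int_{\rn\times\rn} F_\varepsilon$. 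Exploiting the identity $(x-z)-(y-z) = x-y$, writing
\begin{equation*}
\tfrac{(v*\rho_\varepsilon - v)(x) - (v*\rho_\varepsilon - v)(y)}{2\lambda_0 |x-y|^{\{s\}}} = \tfrac{1}{2}\int_{\rn} \rho_\varepsilon(z)\tfrac{v(x-z)-v(y-z)}{\lambda_0|x-y|^{\{s\}}}\,dz - \tfrac{1}{2}\tfrac{v(x)-v(y)}{\lambda_0|x-y|^{\{s\}}},
\end{equation*}
and combining the convexity and monotonicity of $A$ with Jensen's inequality applied to the probability measure $\rho_\varepsilon(z)\,dz$, one obtains the key pointwise bound
\begin{equation*}
F_\varepsilon(x,y) \;\leq\; \tfrac12 G_\varepsilon(x,y) + \tfrac12 \tilde g(x,y) \;=:\; h_\varepsilon(x,y),
\end{equation*}
where $G_\varepsilon(x,y) := \int_{\rn} \rho_\varepsilon(z)\,\tilde g(x-z,y-z)\,dz$. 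Changing variables to $(x, w) := (x, x-y)$ reveals $G_\varepsilon$ as the pure $x$-convolution of $\tilde g$ with $\rho_\varepsilon$ at fixed $w$, so $G_\varepsilon \to \tilde g$ in $L^1(\rn\times\rn)$ by Fubini together with the $L^1$-continuity of translations, and consequently $h_\varepsilon \to \tilde g$ in $L^1(\rn\times\rn)$.

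The conclusion then follows from a Vitali-type convergence argument. At every $(x,y) \in \rn \times \rn$ such that both $x$ and $y$ are Lebesgue points of $v$ -- an a.e.\ set by Fubini and the Lebesgue differentiation theorem -- one has $(v*\rho_\varepsilon)(x) \to v(x)$ and $(v*\rho_\varepsilon)(y) \to v(y)$, so that $F_\varepsilon(x,y) \to 0$ by continuity of $A$. Since $0 \leq F_\varepsilon \leq h_\varepsilon$ and the $L^1$-convergent family $\{h_\varepsilon\}$ is equi-integrable and tight on $\rn\times\rn$, the generalized dominated convergence theorem yields $\int_{\rn\times\rn} F_\varepsilon \to 0$, completing the proof. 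The main technical obstacle is precisely the absence of the $\Delta_2$-condition on $A$: one cannot extract a single $\varepsilon$-independent integrable majorant, which forces the moving-dominant version of Lebesgue's theorem and accounts for the factor-of-$2$ enlargement of $\lambda$ -- norm convergence is not available, but modular convergence is.
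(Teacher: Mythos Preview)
Your proof is correct and takes a genuinely different, more direct route than the paper's. The paper first invokes a characterization of the Gagliardo seminorm via one-dimensional first differences (Lemma~\ref{lemma6}, from \cite{BreitCia}) to reduce the problem to the convergence of
\[
\int_0^\infty\int_{\rn} A\!\left(c\,\frac{|\Delta_i^h(u_\varepsilon-u)(x)|}{\lambda h^{\{s\}}}\right)dx\,\frac{dh}{h},
\]
then applies Gossez's modular approximation lemma (Lemma~\ref{lemma5}) fibrewise in $h$, extracts a subsequence to get a.e.\ convergence in $h$, and finishes by dominated convergence with the majorant $g_i(h)$. You instead work entirely on $\rn\times\rn$: the Jensen bound $F_\varepsilon\le\tfrac12 G_\varepsilon+\tfrac12\tilde g$ with $G_\varepsilon$ the diagonal mollification of $\tilde g$, the $L^1$-continuity of translations giving $G_\varepsilon\to\tilde g$ in $L^1$, and the generalized dominated convergence theorem. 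Your argument is self-contained---it uses neither the difference-quotient reformulation nor Gossez's lemma---and in fact yields convergence along the \emph{full} family $\varepsilon\to0^+$, whereas the paper only produces a subsequence $\{\varepsilon_j\}$; both suffice for the statement. One small point worth making explicit: your pointwise step $(v*\rho_\varepsilon)(x)\to v(x)$ at Lebesgue points presupposes $v=\nabla^{[s]}u\in L^1_{\mathrm{loc}}(\rn)$, which is automatic for $s>1$ by the definition of weak derivatives and can be checked for $s\in(0,1)$ from $J_{s,A}(u/\lambda)<\infty$ together with $u\in\mathcal M_d(\rn)$ by a short argument (restrict the double integral to $x$ in a ball and $y$ in a surrounding annulus). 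The paper uses the same implicit hypothesis when defining $u_\varepsilon=\varrho_\varepsilon*u$.
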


The approximating sequence announced in the statement of Theorem \ref{prop2} will be obtained via
convolutions defined as follows.
Let $\varrho \in C_0^\infty (\rn)$ be a nonnegative function   such that
\begin{equation}\label{A2}
\int_{\rn} \varrho (x) \; dx =1\qquad \hbox{and}\qquad {\rm supp}\, \varrho\subset \mathcal B,
\end{equation}
where $\mathcal B$ denotes the unit ball centered at $0$.
For each $\varepsilon >0$, set
\begin{equation}\label{A3}
\varrho _{\varepsilon}(x) = \varepsilon^{-n} \varrho \left(\frac x\varepsilon \right) \qquad \hbox{for}\;\; x\in \rn\,.
\end{equation}
Given a function $u \in L^1_{\rm loc}(\rn)$, we define
\begin{equation}\label{conv} u_{\varepsilon}= \varrho_{\varepsilon} * u,
\end{equation}
the convolution of $u$ with $ \varrho_{\varepsilon} $.

 The proof of property \eqref{11} makes use of the next  two lemmas from
\cite{gossez} and \cite{BreitCia}.

\begin{lemmaalph}[{\bf  \cite{gossez}}]\label{lemma5} \emph{Let $A$ be a finite-valued  Young function. Assume that $u\in \M (\rn)$ is such that
\begin{equation}\label{A1}
% u\in L^A(\rn) \qquad \hbox{and}\qquad
\int_{\rn} A(2\, |u(x)|)\; dx <\infty\,.
\end{equation}
Then, $u_{\varepsilon} \in C^\infty (\rn)$ for $\varepsilon>0$, and
\begin{equation}\label{A4}
\lim_{\varepsilon \to 0^+}\,  \int_{\rn} {\color{black}A\left(| u_{\varepsilon} (x)- u(x)| \right)}\; dx =0\,.
\end{equation}}
\end{lemmaalph}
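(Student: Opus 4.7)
The plan is to combine Jensen's inequality in modular form with a generalized dominated convergence argument. First, I would verify that $u \in L^1_{\mathrm{loc}}(\rn)$, which follows from $\int_{\rn} A(2|u|)\,dx < \infty$ together with the fact that any non-trivial finite-valued Young function satisfies $A(t) \geq ct$ for $t$ beyond some threshold (since $A(t)/t$ is non-decreasing for convex $A$ with $A(0)=0$, and $A$ is not identically zero). This ensures that $u_\varepsilon = \varrho_\varepsilon * u$ is well-defined and, by differentiating under the integral sign against the smooth compactly supported $\varrho_\varepsilon$, belongs to $C^\infty(\rn)$ for each $\varepsilon > 0$.

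For \eqref{A4}, the key is a pointwise bound via two applications of convexity. Jensen's inequality applied to the probability measure $\varrho_\varepsilon(y)\,dy$ gives
\begin{equation*}
A(2|u_\varepsilon(x)|) \leq \bigl(\varrho_\varepsilon * A(2|u|)\bigr)(x) \quad \text{for a.e.\ $x \in \rn$.}
\end{equation*}
Combining this with $|u_\varepsilon - u| \leq \tfrac12 \cdot 2|u_\varepsilon| + \tfrac12 \cdot 2|u|$ and the monotonicity and convexity of $A$ yields
\begin{equation*}
A\bigl(|u_\varepsilon(x) - u(x)|\bigr) \leq h_\varepsilon(x), \qquad h_\varepsilon := \tfrac12\bigl(\varrho_\varepsilon * A(2|u|)\bigr) + \tfrac12 A(2|u|).
\end{equation*}
Since $A$ is continuous (being finite-valued and convex) and $u_\varepsilon(x) \to u(x)$ almost everywhere by the Lebesgue differentiation theorem, one has $A(|u_\varepsilon - u|) \to 0$ a.e.\ in $\rn$.

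To upgrade this pointwise convergence to modular convergence, I would invoke a generalized dominated convergence argument. By Fubini's theorem and $\int_{\rn} \varrho_\varepsilon\,dy = 1$, the $L^1$-norm of $h_\varepsilon$ equals $\int_{\rn} A(2|u|)\,dx$ for every $\varepsilon > 0$, and the standard $L^1$-approximation property of mollifiers (applied to $A(2|u|) \in L^1(\rn)$) yields $h_\varepsilon \to A(2|u|)$ both pointwise a.e.\ and in $L^1(\rn)$. Applying Fatou's lemma to the nonnegative functions $h_\varepsilon - A(|u_\varepsilon - u|)$ then gives
\begin{equation*}
\int_{\rn} A(2|u|)\,dx \leq \liminf_{\varepsilon \to 0^+} \int_{\rn} \bigl(h_\varepsilon - A(|u_\varepsilon - u|)\bigr)\,dx = \int_{\rn} A(2|u|)\,dx - \limsup_{\varepsilon \to 0^+} \int_{\rn} A(|u_\varepsilon - u|)\,dx,
\end{equation*}
forcing $\limsup_{\varepsilon \to 0^+} \int_{\rn} A(|u_\varepsilon - u|)\,dx \leq 0$, proving \eqref{A4}. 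The main obstacle I anticipate is the absence of the $\Delta_2$-condition on $A$, which prevents the construction of an $\varepsilon$-independent integrable majorant for the classical dominated convergence theorem; the factor $2$ built into the hypothesis \eqref{A1} together with this generalized Fatou-type scheme are exactly what circumvents that difficulty.
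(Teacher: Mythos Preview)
The paper does not give its own proof of this lemma; it is imported verbatim from Gossez \cite{gossez} and merely quoted. So there is no ``paper's proof'' to compare against.

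Your argument is correct and is essentially the classical one. The local integrability step is fine; the Jensen bound $A(2|u_\varepsilon|)\le \varrho_\varepsilon*A(2|u|)$ combined with convexity to get the majorant $h_\varepsilon$ is exactly the right move, and the point that $\int_{\rn} h_\varepsilon\,dx=\int_{\rn} A(2|u|)\,dx$ for every $\varepsilon$ (not just in the limit) is what makes the Fatou step clean. The one cosmetic remark is that Fatou's lemma is a sequential statement, so strictly speaking you should run the argument along an arbitrary sequence $\varepsilon_j\to 0^+$ and conclude from the arbitrariness of the sequence; this is implicit in what you wrote and causes no difficulty. Your closing observation that the factor $2$ in the hypothesis is precisely what compensates for the absence of a $\Delta_2$-condition is the correct diagnosis of why the lemma is stated the way it is.
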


\medskip
Given a function $u: \rn \to \R$ and $h>0$, we set
\begin{equation}\label{A7}
\Delta_i^h u(x) = u(x+h\, e_i) -u(x) \quad \text{for  $x\in \rn$,}
\end{equation}
and for  $i\in\{1, \dots, n\}$. Here, $e_i$ denotes the $i$-th coordinate unit vector in $\rn$.

\begin{lemmaalph}[{\bf \cite{BreitCia}}] \label{lemma6} \emph{ Let $s\in (0, 1)$ and let $A$ be a Young function. Then, there exist  constants $c=c(n)$ and $c'=c'(n)$ such that
% $u$ is such that
%\begin{equation}\label{A5}
%\int_{\rn}\int_{\rn} A\left(\frac{|u(x) - u(y)|}{|x-y|^s}\right)\, \frac{dx\,dy}{|x-y|^n} <\infty \,.
%\end{equation}
%Then, there exists $C$ such that
\begin{align}\label{A6}
&\sum_{i=1}^n \int_0^\infty \int_{\rn} A\left(c \frac{|\Delta_i^h u(x)|}{h^s}\right) \,dx\, \frac{dh}{h}
\\ \nonumber  &  \quad \quad \leq
\int_{\rn}\int_{\rn} A\left(\frac{|u(x) - u(y)|}{|x-y|^s}\right)\, \frac{dx\,dy}{|x-y|^n}
\\ \nonumber  &  \quad \quad \quad \quad \leq  \sum_{i=1}^n \int_0^\infty \int_{\rn} A\left(c' \frac{|\Delta_i^h u(x)|}{h^s}\right) \,dx\, \frac{dh}{h}
\end{align}
for every $u \in \M(\rn)$.}
%\todo[inline]{Lenka: Doesn't it follow from~\eqref{A5} that the expression of the left-hand side of~\eqref{A6} makes sense?}

\end{lemmaalph}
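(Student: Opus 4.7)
The plan is to prove the two-sided inequality \eqref{A6} by passing to polar coordinates in the Gagliardo double integral and carefully comparing radial differences $u(x+r\omega)-u(x)$ with coordinate-axis differences $\Delta_i^h u$. The structural reason why the equivalence should hold is visible immediately after the change of variable $y=x+r\omega$: since $dy = r^{n-1}\,dr\,d\mathcal H^{n-1}(\omega)$, one has
\begin{equation*}
\int_{\rn}\int_{\rn} A\!\left(\tfrac{|u(x) - u(y)|}{|x-y|^s}\right) \tfrac{dx\,dy}{|x-y|^n} = \int_{S^{n-1}} \int_0^\infty \int_{\rn} A\!\left(\tfrac{|u(x+r\omega) - u(x)|}{r^s}\right) dx\, \tfrac{dr}{r}\, d\mathcal H^{n-1}(\omega),
\end{equation*}
so the logarithmic weight $dr/r$ matches precisely the $dh/h$ appearing in the one-dimensional integrals.

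For the \emph{upper} bound (second inequality in \eqref{A6}), I would telescope a radial increment along coordinate axes,
\begin{equation*}
u(x+r\omega) - u(x) = \sum_{i=1}^n \bigl[u(x + {\textstyle\sum_{j\leq i}} r\omega_j e_j) - u(x+{\textstyle\sum_{j<i}} r\omega_j e_j)\bigr],
\end{equation*}
so that the $i$-th summand is exactly a coordinate difference $\Delta_i^{r\omega_i} u$ at a shifted base-point. Applying Jensen's inequality in the form $A(\sum_i t_i) \leq \tfrac{1}{n}\sum_i A(n t_i)$, then using the property $kA(t)\leq A(kt)$ for $k\geq 1$ to transfer the factor $|\omega_i|^s\leq 1$ from outside to inside the argument of $A$, reduces each summand, after translation-invariance in $x$, to $\int_{\rn} A(n|\Delta_i^{r|\omega_i|}u(y)|/(r|\omega_i|)^s)\,dy$. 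The substitution $h=r|\omega_i|$ (with $dh/h = dr/r$, independent of $|\omega_i|$) collapses the $\omega$-integration into the finite constant $\mathcal H^{n-1}(S^{n-1})$ and yields the claimed bound once the residual multiplicative constants are absorbed through $kA(t)\leq A(kt)$.

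The \emph{lower} bound (first inequality in \eqref{A6}) is the substantive one: it requires extracting a one-dimensional coordinate difference from an inherently $n$-dimensional integrand via a bridging/averaging argument. Fix $i\in\{1,\dots,n\}$, $h>0$, and pick a solid cone $\mathcal C_i \subset S^{n-1}$ of positive measure around $e_i$ together with a window $r\in[h,2h]$. For every $(\omega,r)\in \mathcal C_i\times[h,2h]$ the triangle inequality gives
\begin{equation*}
|\Delta_i^h u(x)| \leq |u(x+h e_i)-u(x+r\omega)| + |u(x+r\omega)-u(x)|,
\end{equation*}
in which both summands are increments of $u$ along vectors of length comparable to $h$; the first is a radial increment from the shifted base-point $x+r\omega$ by the displacement $he_i-r\omega$, which lies in an annular set of diameter $\approx h$ by the cone/window constraint. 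Averaging over $(\omega,r)\in\mathcal C_i\times[h,2h]$, using convexity of $A$ and $A$-Jensen, and then invoking Fubini together with translation-invariance in $x$, one dominates $\int_0^\infty\int_{\rn} A(c|\Delta_i^h u|/h^s)\,dx\,dh/h$ by the polar-coordinate Gagliardo integral, with $c$ depending only on the geometric parameters of $\mathcal C_i$.

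The main obstacle is this lower bound, specifically the bookkeeping of the change of variables turning the average over the cone and the $r$-window into a single factor with the correct $dh/h$ scaling, while absorbing multiplicative constants into the argument of $A$ via $kA(t)\leq A(kt)$. One also needs to handle the first summand of the triangle inequality: its argument $he_i-r\omega$ ranges over an annular region whose $h$-rescaling must be controlled uniformly in $\omega$. The hypothesis $s\in(0,1)$ ensures that the resulting scalings $|he_i-r\omega|^{-s}$ and $r^{-s}$ are all comparable to $h^{-s}$ with constants depending only on $n$, which is what makes the whole averaging closed.
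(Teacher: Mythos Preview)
The paper does not supply a proof of this lemma; it is quoted from the external reference \cite{BreitCia} (listed as ``in preparation''), so your sketch is being assessed on its own merits rather than against a proof in the present paper.

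Your upper bound via coordinate telescoping, Jensen, translation invariance in $x$, and the scale change $h=r|\omega_i|$ (with $dh/h=dr/r$) is correct and essentially complete. One small imprecision: the factor $|\omega_i|^s\le 1$ sitting inside the argument of $A$ is dropped by monotonicity of $A$, not by the inequality $kA(t)\le A(kt)$; the latter is what you invoke at the end, in the equivalent form $A(t/K)\le A(t)/K$ for $K\ge 1$, to absorb the residual outside constant into $c'$.

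Your cone-averaging scheme for the lower bound is the right idea and does go through, but there is a slip in the setup. With the window $r\in[h,2h]$ and a cone $\mathcal C_i$ \emph{around} $e_i$, the displacement $he_i-r\omega$ is not confined to an annulus of inner radius $\approx h$: at $r=h$, $\omega=e_i$ it vanishes, so the claimed comparability $|he_i-r\omega|^{-s}\approx h^{-s}$ fails there. The fix is immediate --- take, e.g., $r\in[3h,4h]$ with $\omega\cdot e_i\ge 1/2$, which forces $\sqrt{2}\,h\le |he_i-r\omega|\le 5h$ --- but it must be said. Once this is in place, the bookkeeping you flag is straightforward: after the shift $z=x+r\omega$ one substitutes $w=he_i-r\omega$, uses $\tfrac{dr}{r}\,d\omega=\tfrac{dw}{|he_i-w|^{n}}\approx \tfrac{dw}{|w|^{n}}$ on the admissible set, and both terms produced by the triangle inequality land back in the polar form of the Gagliardo integral; a Fubini in $h$ (each fixed direction vector contributes bounded $dh/h$--measure) closes the estimate up to an outside constant, which is again absorbed into $c$ via $A(t/K)\le A(t)/K$. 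Your remark that $s\in(0,1)$ is what keeps all the resulting powers $c_1^{-s},b^{-s},\dots$ bounded independently of $s$, so that $c=c(n)$ and $c'=c'(n)$, is correct.
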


\begin{proof}[Proof of Theorem \ref{prop2}]
 Assume first that $s\in (0,1)$, and let   $u\in V^{s,A}_d(\rn)$. Thereby, there exists a constant $\lambda>0$ such that
 \begin{equation}\label{B1}
 \int_{\rn}\int_{\rn} A\left(\frac{|u(x)- u(y)|}{\lambda \, |x-y|^s}\right) \; \frac{dx\, dy}{|x-y|^n}<\infty\,.
 \end{equation}
 Hence, by Lemma {\color{black}\ref{lemma6}}, there exists a constant $c>0$ such that
 \begin{equation}\label{B2}
 \int_0^\infty \int_{\rn} A\left(c \frac{|\Delta_i^h u(x)|}{\lambda\, h^s}\right) \,dx \frac{dh}h {\color{black}<\infty}
 \end{equation}
for $i\in \{1, \ldots, n\}$. Fix any $i\in \{1, \ldots, n\}$.
Owing to inequality \eqref{B2},
\begin{equation}\label{B3}
  \int_{\rn} A\left(c \frac{|\Delta_i^h u(x)|}{\lambda\, h^s}\right) \,dx <\infty
 \end{equation}
% \todo[inline]{Lenka: Is it clear that this has to be true for every $h>0$? At the moment I can only see that this holds for a.e. $h$.}
for a.e. $h>0$.
 Define the function $g_i : (0,\infty) \to [0,\infty]$ as
 \begin{equation}\label{B4}
 g_i(h) = \frac 1h \, \int_{\rn} A\left(c \frac { |\Delta_i^h u(x)|}{\lambda\, h^s}\right) \; dx \quad \text{ for a.e. $h>0$.}
 \end{equation}
 Inequality  \eqref{B2} implies that
 \begin{equation}\label{B5}
 g_i\in L^1(0, \infty).
 \end{equation}
For each $h>0$,  define the function $v_i: \rn \to \R$ as
 \begin{equation}\label{B6}
 v_i(x) =\frac{c}{2 }\frac{ \Delta_i^h u(x)}{\lambda \, h^s}\qquad \hbox{for}\;\; x\in \rn\,.
 \end{equation}
Owing to inequality  \eqref{B3}, we have that
 \begin{equation}\label{B7}
 \int_{\rn}A\left(2\, |v_i(x)|\right)\; dx < \infty %\qquad \hbox{and}\qquad v\in L^A(\rn)
 \end{equation}
for a.e. $h>0$.
An application of  Lemma {\color{black}\ref{lemma5}}, with $u$ replaced by $v_i$, ensures that
 \begin{equation}\label{B8}
 \lim_{\varepsilon \to 0^+}\int_{\rn}A\left(|(v_i)_\varepsilon(x) - v_i(x)|\right)\; dx =0
 \end{equation}
 for a.e. $h>0$,
where the function $(v_i)_\varepsilon$  is defined as in \eqref{conv}.
 For $\varepsilon >0$, define the function $g_{i, \varepsilon}\colon (0,\infty) \to [0,\infty]$ as
 \begin{equation}\label{B10}
 g_{i, \varepsilon}(h) = \frac 1h \, \int_{\rn} A\left(|(v_i)_\varepsilon(x) - v_i(x)|\right) \; dx\,.
 \end{equation}
As a consequence of equation
\eqref{B8}, there exists a sequence $\{\varepsilon_j\}$ such  that
 \begin{equation}\label{B11}
 \lim_{j \to \infty} g_{i,\varepsilon_j} (h)= 0
 \end{equation}
for  a.e. $h>0$ and for every $ i\in\{1, \ldots, n\}$.
 Next,  we have that
%\todo[inline]{Lenka: Everywhere below there seems to be $\nabla$ instead of $\Delta$. I changed this without highlighting in black.}
 \begin{equation}\label{B12}
g_{i,\varepsilon_j} (h)\leq \frac 1{2h} \, \int_{\rn} A(2|(v_i)_{\varepsilon_j}(x)|) \; dx + \frac 1{2h} \, \int_{\rn} A(2|v_i(x)|) \; dx \leq  \frac 1h \, \int_{\rn} A(2|v_i(x)|) \; dx
  \end{equation}
for $h>0$,   $j \in \N$, and  $ i\in\{1, \ldots, n\}$.
Notice that the last inequality holds inasmuch as, by Jensen's inequality and the properties of $\varrho_{\varepsilon_j}$,
   \begin{equation}\label{B13}
\int_{\rn} A(2|(v_i)_{\varepsilon_j}(x)|) \; dx = \int_{\rn} A(2|\varrho_{\varepsilon_j} * v_i(x)|) \; dx
\leq  \int_{\rn} A(2|v_i(x)|) \; dx
  \end{equation}
for $h>0$.
Equations \eqref{B12} and \eqref{B13} imply that
   \begin{equation}\label{B14}
   g_{i,\varepsilon_j} (h)\leq  g_i(h)
   \end{equation}
for  $h>0$, $j \in \N$, and $i\in\{1, \ldots, n\}$.
Since, by equation \eqref{B5}, $g_i\in L^1(0, \infty)$, the dominated convergence theorem ensures that
   \begin{equation}\label{B15}
  \lim_{j \to \infty}
 \int_0^\infty g_{i,\varepsilon_j} (h)\; dh =0
 \end{equation}
for $i\in\{1, \ldots, n\}$,  whence
  \begin{align}\label{B16}
  \lim_{j \to \infty}\sum_{i=1}^n \,& \int_0^\infty \, \int_{\rn} A\left(\frac{c}{2}\frac{ \Big|  \Delta _i^h  \left(\varrho_{\varepsilon_j} *u - u\right)(x)\Big|}{\,\lambda \, h^s}\right) \; dx\, \frac{dh}h  \\ \nonumber &=  \lim_{j \to \infty}\sum_{i=1}^n \, \int_0^\infty \, \int_{\rn} A\left(\frac{c}{2} \frac{\Big| \varrho_{\varepsilon_j} * \Delta _i^h u(x) - \Delta _i^h u(x)\Big|}{\lambda \, h^s}\right) \; dx\, \frac{dh}h
=0\,.
 \end{align}
Hence, via another application of   Lemma~\ref{lemma6}, we conclude that there exists a constant $c$, depending on $\lambda$ and $n$, such that
  \begin{equation}\label{B17}
 \lim_{j \to \infty}  J_{s,A}(c(u_{\varepsilon_j} -u)) =0\,.
 \end{equation}
Thus, equation \eqref{11} is  established for $s\in (0,1)$.
\\ When $s\in(1,\infty)\setminus \N$, the conclusion follows with the same argument applied to $\nabla^{[s]} u$ and $\{s\}$ in the place of $u$ and $s$, respectively.
\end{proof}

\section{Proofs of the main results}\label{proofmain}

With the technical material of Sections \ref{boundedness} and \ref{smooth} at disposal, we are in a position to accomplish the proofs of our main results.

\begin{proof}[Proof of Theorem \ref{teo3}]
If assumption \eqref{s} is in force, namely $s\in (0,n) \setminus \N$, then \cite[Proposition 6.3 and Remark 7.3]{ACPS_frac} tell us that condition \eqref{indisp} is necessary for
any  embedding of the form \eqref{general} to hold.
\\ Thus, it is sufficient to show that, if $s\in (n, \infty) \setminus \N$, then  such an embedding fails  for every rearrangement-invariant space $Y(\rn)$. Assume, by contradiction, that there exists a rearrangement-invariant space $Y(\rn)$ which renders  embedding \eqref{general} true.
Let $\xi \in C^\infty_0(\rn)$  be  a nonnegative function such that $\xi=1$ in $\mathcal B$. For each $j\in \N$,   consider the function $u_j : \rn \to \mathbb  R$ defined as
\begin{equation}\label{100}
u_j (x)= j^{s-n} \xi \Big(\frac x j\Big) \qquad \hbox{for}\;\; x\in \rn\,.
\end{equation}
Since $u_j \in C^\infty_0(\rn)$, we have that
$$|\{ |\nabla ^k u_j|> t\}| < \infty \quad \text{ for  $t>0$,}$$
 for  $k=0, 1, \dots, [s]$.
\\
We claim that there exists a constant $c$, independent of $j$, such that
\begin{equation}\label{may30}
|\nabla ^{[s]}u_j|_{\{s\}, A, \rn} \leq c.
\end{equation}
To verify this claim, observe that
\begin{equation}\label{101}
 \nabla ^{[ s] }u_j (x) = j^{s- [ s] -n}  \nabla ^{[ s] } \xi  \Big(\frac x j\Big) = j^{\{s\} -n}   \nabla ^{[ s] } \xi  \Big(\frac x j\Big) \quad \text{for $x \in \rn$.}
\end{equation}
Therefore,
\begin{equation}\label{102}
 \frac{\Big|\nabla ^{[ s] }u_j(x) - \nabla ^{[ s] }u_j(y)\Big|}{|x-y|^{\{s\}}} =  \frac{\Big|\nabla ^{[ s] }\xi\big(\frac x j\big) - \nabla ^{[ s] }\xi\big(\frac y j\big)\Big|}{\big|\frac{x-y}j \big|^{\{s\}}}\, j^{-n} \quad \text{for $x,y \in \rn$, with $x \neq y$.}
\end{equation}
Since $\xi$ is smooth, and $j \geq 1$, the right-hand side of equation \eqref{102} is pointwise bounded by a constant $t_0$ independent of $j$.
\\
Next, since   $A$ is a Young function,  there exists a constant $c$ such that $A(t)\leq c\, t$ if $t\in [0, t_0]$.  Hence,
\begin{align}\label{103}
|\nabla ^{[ s] }u_j|_{\{s\}, A, \rn} &\leq c \, \int_{\rn} \int_{\rn} \frac{\big|\nabla ^{[ s] }u_j(x) - \nabla ^{[ s] }u_j(y)\big|}{|x-y|^{\{s\}}}\; \frac{dx \, dy}{|x-y|^n}
\\
& = c\, j^{-2n} \, \int_{\rn} \int_{\rn} \frac{\Big|\nabla ^{[ s] }\xi\big(\frac x j\big) - \nabla ^{[ s] }\xi\big(\frac y j\big)\Big|}{\big|\frac{x-y}j \big|^{\{s\}}}\; \frac{dx \, dy}{|\frac {x-y}j|^n}\nonumber
\\
& = c\,  \, \int_{\rn} \int_{\rn} \frac{\big|\nabla ^{[ s] }\xi(x) - \nabla ^{[ s]}\xi(y)\big|}{|x-y|^{\{s\}}}\; \frac{dx \, dy}{|x-y|^n}\,. \nonumber
\end{align}
The assumption that $\xi \in C^\infty_0(\rn)$  ensures that the integral on
the rightmost-hand side  of equation \eqref{103} is convergent. Hence, inequality \eqref{may30} follows.
\\ Recall that any rearrangement-invariant space $Y(\rn)$ is continuously embedded into $(L^1 + L^\infty)(\rn)$ {\color{black} --} see e.g. \cite[Theorem~6.6, Chapter~2]{BS}. Thereby, assumption \eqref{general} entails that
$$
 V^{s,A}_d(\rn)\to (L^1 + L^\infty)(\rn).
$$
Hence, from inequality \eqref{may30} we deduce that
\begin{equation}\label{may32}
\|u_j\|_{(L^1 + L^\infty)(\rn)} \leq c
\end{equation}
for some constant $c$ and
for every $j\geq 1$.
On the other hand, by
\cite[Theorem 6.2, Chapter 2]{BS},
$$\|u_j\|_{(L^1 + L^\infty)(\rn)}= \int_0^1 u_j^* (t) \;dt. $$
Thus,
\begin{align}\label{104}
\|u_j\|_{(L^1 + L^\infty)(\rn)}&= \int_0^1 u_j^* (t) \;dt = j^{s-n}\, \int_0^1 \xi^* \left(\frac t{ j^n}\right)\; dt=  j^s \, \int_0^{\frac 1{j^n}} \xi^* (\tau) \; d\tau
\\
&\geq j^s \xi^* \left(\frac 1{j^n}\right)\int_0^{\frac 1{j^n}} d\tau= j^{s-n} \, \xi^*\left(\frac 1{j^n}\right)\geq j^{s-n} \qquad \hbox{for sufficiently large $j$}\,.\nonumber
    \end{align}
Inasmuch as $s-n>0$, coupling inequality \eqref{may32} with  \eqref{104} yields a contradiction, for sufficiently large $j$.
\end{proof}

The following proposition substantiates the assertion from Section \ref{back} that $V^{s,A}_d(\rn)$ is actually a Banach space. This piece of information will be  needed for an application of the closed graph theorem in the proof of Theorem \ref{teo_cont}.

\begin{proposition}\label{banach}
Assume that  $s$ and  $A$  fulfill conditions \eqref{s} and \eqref{indisp}. Then,
the functional $\big|\nabla ^{[s]}u\big|_{\{s\},A,\rn}$ defines a norm on the space $V^{s,A}_d(\rn)$.
Moreover, the
 space $V^{s,A}_d(\rn)$, equipped with this norm, is a Banach space.
\end{proposition}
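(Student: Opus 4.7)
The plan is to prove the norm assertion and completeness separately. For the norm I would exploit the Luxemburg seminorm structure; for completeness I would combine an embedding into a rearrangement-invariant space (to extract a candidate limit) with Fatou's lemma applied directly to the double integral defining $J_{\{s\},A}$ (to upgrade that convergence to the Luxemburg seminorm).

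For the norm part, $|\,\cdot\,|_{\{s\},A,\rn}$ is the Luxemburg seminorm associated with the convex modular $J_{\{s\},A}$, so subadditivity and positive homogeneity are automatic. For definiteness, if $|\nabla^{[s]}u|_{\{s\},A,\rn}=0$ then, by the very definition of the Luxemburg infimum, $J_{\{s\},A}(M\nabla^{[s]}u)\leq 1$ for every $M>0$. Since $A$ is a nontrivial Young function, letting $M\to\infty$ forces $\nabla^{[s]}u(x)=\nabla^{[s]}u(y)$ for a.e.\ $(x,y)\in\rn\times\rn$, i.e.\ $\nabla^{[s]}u$ is a.e.\ constant. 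The decay condition built into $V^{s,A}_d(\rn)$ via \eqref{nov100higher} forces this constant to be $0$. If $[s]\geq 1$, I would then descend inductively: each $\nabla^k u$ with $k<[s]$ has vanishing weak gradient (once $\nabla^{k+1}u$ is zero), hence is a.e.\ a polynomial of degree at most $[s]-k$, and its membership in $\M_d(\rn)$ forces it to vanish; ultimately $u=0$.

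For completeness, let $\{u_j\}\subset V^{s,A}_d(\rn)$ be Cauchy in the norm; I would first treat the case $s\in(0,1)$. Under \eqref{s} and \eqref{indisp} one of \eqref{subcrit} or \eqref{supercrit} holds, and in either case the paper supplies an embedding $V^{s,A}_d(\rn)\to Y(\rn)$ into some rearrangement-invariant $Y$: through \eqref{may20} in the subcritical regime and through Theorem~\ref{bound} together with \eqref{2.10} in the supercritical one. Since $u_j-u_k\in V^{s,A}_d(\rn)$ by subadditivity of level sets, the embedding yields that $\{u_j\}$ is Cauchy in $Y(\rn)$ and so converges to some $u\in Y(\rn)$; along a subsequence (not relabelled) $u_j\to u$ a.e. Given $\varepsilon>0$, pick $N$ with $|u_k-u_j|_{s,A,\rn}<\varepsilon$ whenever $j,k\geq N$, so that $J_{s,A}\bigl((u_k-u_j)/\varepsilon\bigr)\leq 1$. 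Pointwise a.e.\ convergence of the integrand together with Fatou's lemma gives
\[
J_{s,A}\bigl((u-u_j)/\varepsilon\bigr)\leq \liminf_{k\to\infty} J_{s,A}\bigl((u_k-u_j)/\varepsilon\bigr)\leq 1,
\]
whence $|u-u_j|_{s,A,\rn}\leq\varepsilon$ for $j\geq N$. Finite-measure level sets of $u$ follow from a.e.\ convergence combined with uniform Chebyshev-type bounds on level sets of $u_j$ derived from their $Y$-norm (or directly from the $L^\infty$ bound \eqref{201bis} in the supercritical case).

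When $s\in(1,\infty)\setminus\N$, I would apply the above argument to the Cauchy sequence $\nabla^{[s]}u_j$ in $V^{\{s\},A}_d(\rn)$ to produce a vector field $V$ which is its modular limit; in parallel, the embedding applied at order $s$ gives $u_j\to u$ in a suitable target, and matching the two limits via distributional differentiation identifies $V=\nabla^{[s]}u$. The step I expect to be the main obstacle is securing the decay condition $\nabla^k u\in\M_d(\rn)$ for every $k=0,\dots,[s]$: the Luxemburg seminorm controls only the top-order derivative, and in the supercritical regime the natural target $L^\infty(\rn)$ imposes no decay at all, so one must combine a.e.\ subsequential convergence with uniform level-set estimates obtained from the embedding at each intermediate order to propagate $\M_d$-membership from the $u_j$ to $u$ and its lower-order derivatives.
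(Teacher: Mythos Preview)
Your norm argument is fine and more explicit than the paper's one-line ``standard''. The Fatou-on-the-modular step for upgrading a.e.\ subsequential convergence to seminorm convergence is also sound and is essentially what the paper hides behind ``a customary argument''.

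The genuine gap is in the supercritical branch. You appeal to Theorem~\ref{bound} together with \eqref{2.10}, and later to the $L^\infty$ bound \eqref{201bis}, to produce a rearrangement-invariant target $Y(\rn)$ in which $\{u_j\}$ is Cauchy. But in the paper's logical order, the continuous embeddings \eqref{linf}, \eqref{2.10} and the inequality \eqref{201bis} are proved \emph{after} Proposition~\ref{banach} and, in fact, the proof of Theorem~\ref{teo_cont} invokes Proposition~\ref{banach} explicitly (via the closed graph theorem). Theorem~\ref{bound} by itself gives only the set-theoretic inclusion $V^{s,A}_d(\rn)\subset L^\infty(\rn)$, with no norm estimate, so it cannot turn a seminorm-Cauchy sequence into an $L^\infty$-Cauchy one. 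You correctly flag the decay issue for intermediate-order derivatives as the main obstacle, but the route you sketch does not resolve it without circularity.

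The paper's fix is exactly the device that dissolves both the circularity and the decay problem at once: in the supercritical case one replaces $A$ by a Young function $\overline A=B$ satisfying properties \eqref{2} and \eqref{3} (such a $B$ is produced in Lemma~\ref{lemma1}), so that $B$ is subcritical and $V^{s,A}_d(\rn)\to V^{s,B}_d(\rn)$. One then applies the already-available subcritical embedding \eqref{may20} with $A$ replaced by $B$, and iterates over orders $s-k$ for $k=0,\dots,[s]$ to obtain
\[
V^{s,A}_d(\rn)\ \to\ \bigcap_{k=0}^{[s]} V^{k,\,\overline A_{\frac{n}{s-k}}}(\rn).
\]
Each target here is an Orlicz space built on a finite-valued Young function, so membership automatically gives $|\{|\nabla^k u|>t\}|<\infty$ for every $t>0$ and every $k$. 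With this in hand, your Fatou argument (or the paper's appeal to completeness of the intersection space) finishes the proof cleanly.
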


\begin{proof}
Checking that the functional $\big|\nabla ^{[s]}u\big|_{\{s\},A, \rn}$ is  a norm on  $V^{s,A}_d(\rn)$ is standard.
% In particular, either inequality \eqref{may21} or \eqref{may23} ensures that $u=0$ if  $\big|\nabla ^{[s]}u\big|_{\{s\},A, \rn}=0$.
Let us show that $V^{s,A}_d(\rn)$, equipped with this norm, is complete.
Denote by $\overline A$ a Young function  such that  $\overline A =A$ if  $A$
 fulfills condition \eqref{subcrit}, and $\overline A =B$, where $B$ is any function satisfying properties \eqref{2} and \eqref{3}, if $A$
 fulfills condition \eqref{supercrit}.
%
%Denote by $\overline A$ a Young function that agrees with $A$ globally if
% the latter fulfils condition \eqref{subcrit}. If this is not the case,
%choose  $\overline A$ in such a way that  it agrees with $A$ near zero, is dominated by $A$ near infinity, and  condition \eqref{subcrit} holds with $A$ replaced by $\overline A$.
Thus,
\begin{equation}\label{march1}
V^{s,A}_d(\rn) \to V^{s,\overline A}_d(\rn) \to \bigcap _{k=0}^{[s]}V^{k, \overline A_{\frac n{s-k}}}(\rn),
\end{equation}
where the second embedding holds thanks to embedding \eqref{may20}, applied with  $s$ subsequently replaced by $s-k$ for $k=0,1,\dots,[s]$.
Here, $ \overline A_{\frac n{s-k}}$ denotes the Young function defined as in \eqref{Ans}--\eqref{H}, with $A$ replaced by $\overline A$.
\\
Now, assume that $\{u_j\}$ is a Cauchy sequence in the space $V^{s,A}_d(\rn)$. Thanks to  embeddings \eqref{march1}, it is also a Cauchy sequence in the space $V^{s,A}(\rn)\bigcap  \Big(\bigcap _{k=0}^{[s]}V^{k, \overline A_{\frac n{s-k}}}(\rn)\Big)$,  endowed with the norm
$$\sum _{k=0}^{[s]} \|\nabla ^k u\|_{\overline A_{\frac n{s-k}}(\rn)} + \big|\nabla ^{[s]}u\big|_{\{s\},A, \rn}.$$
A customary  argument, analogous to the one showing that the classical fractional Sobolev space is a Banach space,
%
%showing that $W^{s,A}(\rn)$ is a Banach space (see e.g. {\color{black}\cite[Proposition~2.11]{Bon:19}})
tells us that $V^{s,A}(\rn)\bigcap \Big( \bigcap _{k=0}^{[s]}V^{k, \overline A_{\frac n{s-k}}}(\rn)\Big)$ is a Banach space. Hence, the sequence $\{u_j\}$ converges to some function $u \in V^{s,A}(\rn)\bigcap \Big( \bigcap _{k=0}^{[s]}V^{k, \overline A_{\frac n{s-k}}}(\rn)\Big)$. In particular,  the fact that $u \in
 \bigcap _{k=0}^{[s]}V^{k, \overline A_{\frac n{s-k}}}(\rn)$ entails that
$$
    |\{|\nabla ^k u|>t\}|<\infty\quad \text{for $t>0$,}
$$
for $k=0,1, \dots ,[s]$.
Hence, $u \in V^{s,A}_d(\rn)$, and $u_j \to u$ in $V^{s,A}_d(\rn)$.
\end{proof}

\medskip

\begin{proof}[Proof of Theorem \ref{teo_cont}]
%We begin by showing that assumption  \eqref{2.8} implies
%embedding \eqref{linf}.
%Fix any Young function $E$ and let $B$ be any Young function as in the statement of Lemma \ref{lemma1}. Define the Young function $E_0$ as
%\begin{equation}\label{4.11}
%E_0(t) =
%\begin{cases}
% 0 & \hbox{near $0$}
% \\
% E(t) & \hbox{near infinity}\,.
% \end{cases}
% \end{equation}
%By \eqref{4},
%\begin{equation*}
%  B_{\frac ns} \quad \hbox{dominates} \quad E_0 \;\; \hbox{globally}  \,.
%\end{equation*}
%Hence,
%\begin{equation}\label{4.12}
%L^{B_{\frac ns}}(\rn)\to L^{E_0}(\rn) \,.
%\end{equation}
%Since, in particular, the function $B$ satisfies assumption \eqref{3}, by the subcritical embedding \eqref{may20} applied with $A$ replaced by $B$, we have that
%\begin{equation}\label{4.13}
%V_d^{s,B} (\rn)\to L^{B_{\frac ns}}(\rn).
%\end{equation}
%On the other hand, property \eqref{2} implies that
%\begin{equation}\label{4.14}
%V_d^{s,A} (\rn)\to V_d^{s,B} (\rn) \,.
%\end{equation}
%Combining embeddings \eqref{4.12}-\eqref{4.14} yields
%\begin{equation}\label{4.15}
%V_d^{s,A} (\rn)\to L^{E_0} (\rn) \,.
%\end{equation}
% Owing to the arbitrariness of the Young function $E$, the latter embedding ensures, via Lemma~{\color{black}\ref{lemma4}}, that
%\begin{equation}\label{4.16}
%V_d^{s,A} (\rn) \subset L^\infty (\rn) \,.
%\end{equation}
We begin by showing that assertion (i) implies (ii).
Assume that condition \eqref{supercrit} holds.  Then, by Theorem \ref{bound}, inclusion \eqref{inclusion} holds.
%\begin{equation}\label{4.16}
%V_d^{s,A} (\rn) \subset L^\infty (\rn) \,.
%\end{equation}
Now, observe that the identity map from $V_d^{s,A} (\rn)$ into $L^\infty (\rn)$ has a closed graph. This claim  is   equivalent to the fact that, if $\{u_j\} \subset V_d^{s,A}(\rn)$ is a sequence such that \begin{equation}\label{may40}
    u_j\to u \qquad \hbox {in $V_d^{s,A}(\rn)$,}
\end{equation}
and
\begin{equation}\label{4.18}
    u_j \to v \qquad \hbox {in $L^\infty(\rn)$}
\end{equation}
for some functions $u\in V_d^{s,A}(\rn)$ and $v\in  L^\infty(\rn)$, then
\begin{equation}\label{4.19}
   u=v \,.
\end{equation}
To  verify that the convergences in \eqref{may40} and \eqref{4.18} imply equation \eqref{4.19}, note that, if $\overline A$ is any Young function   as in the proof of Proposition \ref{banach}, then, by equation \eqref{march1},
\begin{equation}\label{4.19'}
V_d^{s,A} (\rn)\to L^{\overline A_{\frac ns}}(\rn).
\end{equation}
Equation \eqref{may40} and embedding \eqref{4.19'}  ensure   that there exists a subsequence of $\{u_j\}$, still indexed by $j$,    such that
\begin{equation}\label{4.20}
    u_{j} \to u \qquad \hbox {a.e. in $\rn$}\,.
\end{equation}
Coupling equation \eqref{4.18} with \eqref{4.20} implies \eqref{4.19}.
\\  By Proposition \ref{banach}, $V_d^{s,A}(\rn)$ is a Banach space. Since $L^\infty(\rn)$ is also a Banach space, inclusion \eqref{inclusion} yields, via the closed graph theorem,
   the continuous embedding \eqref{linf}.
%
% Since embedding \eqref{cont} trivially implies embedding \eqref{linf}, the proof of the equivalence of assertions (i), (ii) and (iii) will be complete  once we have shown that

Let us now prove that, conversely, assertion (ii) implies (i), namely that
embedding \eqref{linf} implies   condition \eqref{supercrit}. Owing to property \eqref{rednec},   embedding \eqref{linf} entails that there exists a constant $c$ such that
\begin{equation}\label{may50}
\int_0^\infty f(\rho) \, \rho ^{-1 + \frac sn} \; d \rho = \bigg \| \int_r^\infty f(\rho) \, \rho ^{-1 + \frac sn} \; d\rho \bigg\|_{L^\infty(0, \infty)} \leq c \, \|f\|_{L^A (0, \infty)}
\end{equation}
for every nonnegative function $f\in L^A (0, \infty)$. From equation \eqref{may50} and inequality \eqref{holderconv} we deduce that
\begin{equation}\label{may51}
\|r ^{-1 + \frac sn} \|_{L^{\widetilde A}(0, \infty)} \leq c.
\end{equation}
One can verify, by the very definition of Luxemburg norm, that the finiteness of the norm in \eqref{may51} is equivalent to
$$\int_0^\infty \frac{\widetilde A(t)}{t^{1+\frac n{n-s}}}\, dt <\infty,$$
see e.g. \cite[Proof of equation (3.10)]{cianchi_ASNS}. As shown in \cite[Lemma 2.3]{cianchi-ibero}, this condition is in turn equivalent to
$$\int_0^\infty \bigg(\frac{t}{A(t)}\bigg)^{\frac s{n-s}}\, dt <\infty.$$
Hence, property \eqref{supercrit} follows.

The fact that assertion (iii) implies (ii) is trivial. Thus, it  remains to show that (ii) implies (iii). A proof of this implication relies upon inequality \eqref{201bis}, which can be established as follows.
Consider any function
$u \in V^{s,A}_d(\rn)$. If the right-hand side of inequality \eqref{201bis} is infinite, then  the inequality holds trivially.  Hence, we may assume that it is finite.  Set
\begin{equation}\label{202}
N=\left(\int_{\rn}\int_{\rn} A\left(\frac{|\nabla ^{[s]}u(x)-\nabla ^{[s]} u(y)|}{|x-y|^{\{s\}}}\right)\, \frac{dx\,dy}{|x-y|^n}\right)^{-\frac 1n},
\end{equation}
and define the function $u_N: \rn \to \mathbb R$ as
$$
u_N(x)=u\Big(\frac xN\Big)\qquad \hbox{for $x\in \rn$}\,.
$$
Hence,
$$
\nabla ^{[s]} u_N(x)= N^{-[s]}\nabla ^{[s]} u\Big(\frac xN\Big)\qquad \hbox{for $x\in \rn$}\,.
$$
Clearly,
\begin{equation}\label{204}
\|u_N\|_{L^\infty(\rn)} = \|u\|_{L^\infty(\rn)}.
\end{equation}
Moreover,
for every  $\lambda >0$, one has, by a change of variables, % $x' = \frac xN, y'=\frac yN$,
\begin{align}\label{205}
    \int_{\rn}\int_{\rn} &
    A\left(\frac{|\nabla ^{[s]} u_N(x) - \nabla ^{[s]} u_N(y)|}{\lambda \, |x-y|^{\{s\}}}\right) \, \frac{dx\,dy}{|x-y|^n}
        \\
    &=
    \int_{\rn}\int_{\rn}
    A\left(\frac{|  \nabla ^{[s]}u\left(\frac xN\right) -  \nabla ^{[s]}u\left(\frac yN\right)|}{\lambda \, |\frac xN-\frac yN|^{\{s\}}}N^{-[s]-\{s\}}\right) \, \frac{dx\,dy}{|\frac xN - \frac yN|^n}N^{-n}\nonumber
        \\
    & =\int_{\rn}\int_{\rn} A\left(\frac{|  \nabla ^{[s]}u(x') -  \nabla ^{[s]}u(y')|}{\lambda \, |x'-y'|^{\{s\}}}N^{-s}\right) \, \frac{dx'\,dy'}{|x'-y'|^n}N^n\,. \nonumber
\end{align}
The  choice $\lambda = N^{-s}$ in \eqref{205} yields
\begin{align}\label{206}
\int_{\rn}\int_{\rn}
 A\left(\frac{|\nabla ^{[s]} u_N(x) - \nabla ^{[s]} u_N(y)|}{N^{-s} \, |x-y|^{\{s\}}}\right) \, \frac{dx\,dy}{|x-y|^n} = 1.
%\frac{\int_{\rn}\int_{\rn}
% A\left(\frac{| u_N(x) - u_N(y)|}{|x-y|^s}\right) \, \frac{dx\,dy}{|x-y|^n}  }{\int_{\rn}\int_{\rn}
% A\left(\frac{| u_N(x) - u_N(y)|}{ |x-y|^s}\right) \, \frac{dx\,dy}{|x-y|^n}} =1\,.
 \end{align}
 Hence, the definition of the seminorm $| \, \cdot \, |_{s, A, \rn}$ implies that
 \begin{equation}\label{207}
 | u_N|_{s, A, \rn}\leq N^{-s}\,.
 \end{equation}
From embedding \eqref{linf}   and equations \eqref{204} and \eqref{207} one can infer that
\begin{equation}\label{208}
 \| u\|_{L^\infty(\rn)}\leq c \, N^{-s}
 \end{equation}
for some constant $c$. Hence,
 inequality \eqref{201bis} follows.

We are now ready to complete the proof by showing
that assertion (iii) follows from (ii).  This goal will be achieved on proving that any function $u \in  V_d^{s,A}(\rn)$ agrees  with a continuous function a.e.  in $\rn$. To this purpose, we make use of Theorem \ref{prop2}, and pick a number $\lambda >0$ and a sequence  $\{u_j\}\subset C^\infty(\rn)$ fulfilling property \eqref{11}.  Fix $\varepsilon >0$. Then, there exists $j_\varepsilon \in \N$ such that
\begin{equation}\label{T2}
J_{\{s\},A}\left( \frac{\nabla ^{[s]}u_j -\nabla ^{[s]} u}{\lambda}\right)<\varepsilon \qquad \hbox{for}\;\; j\geq j_\varepsilon \,.
\end{equation}
By inequality \eqref{201bis} applied with $u$ replaced by  $(u_i -u)/\lambda$ and $(u_j -u)/\lambda$,  with $i,j \geq j_\varepsilon$,
\begin{align}\label{T3}
\|u_i -u_j\|_{C^0{(\rn)}}&= \|u_i -u_j\|_{L^\infty{(\rn)}} \leq \|u_j -u\|_{L^\infty(\rn)} + \|u_i -u\|_{L^\infty (\rn)}
\\
&
\leq c\, \lambda \left(J_{\{s\},A} \left(\frac{\nabla ^{[s]} u_i -\nabla ^{[s]} u}{\lambda} \right)^{\frac sn} + J_{\{s\},A} \left(\frac{\nabla ^{[s]}u_j -\nabla ^{[s]}u}{\lambda} \right)^{\frac sn}\right)<2c\lambda\varepsilon^{\frac sn}.\nonumber
\end{align}
 Equation \eqref{T3} implies, in particular, that
$u_j \to u$  in $L^\infty(\rn)$.
Moreover, equation \eqref{T3} tells us that
$\{u_j\}$ is a Cauchy sequence in the Banach space $C^0(\rn)$. Thus, there exists
a function $\overline{u}\in C^0(\rn)$ such that $u_j\to \overline u$ in  $C^0(\rn)$, and, hence, in $L^\infty(\rn)$. By the uniqueness of the limit, $\bar{u}=u$ a.e. on $\rn$.
%
%
%
%
%Finally, inequality \eqref{201bis} is a consequence of Lemma \ref{prop1}.
\end{proof}

\begin{lemma}\label{lemma3}
Assume that  $s$ and  $A$  fulfill conditions \eqref{s} and \eqref{indisp}. Let $B$ be another Young function such that
%
%Let $s\in (0,n)\setminus \N$ and let $A$ and $B$ be Young functions such that
%\begin{equation}\label{1.1}
%\int_0\left( \frac t{A(t)} \right)^{\frac s{n-s}} \; dt < \infty\,,%\qquad \int_0\left( \frac t{B(t)} \right)^{\frac s{n-s}} \; dt < \infty\,,
%\end{equation}
%and
\begin{equation}\label{1.2}
A(t) \simeq B(t) \qquad \hbox{near $0$}\,.
\end{equation}
Then,
\begin{equation}\label{1.3}
\left (L^\infty \cap L(A, \tfrac ns) \right)(\rn) =  \left ( L^\infty \cap  L(B, \tfrac ns) \right)(\rn) \,,
\end{equation}
up to equivalent norms.
\end{lemma}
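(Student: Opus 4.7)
The plan is to reduce both norms to a common equivalent expression by invoking the alternative representation of the norm on $\left(L^\infty \cap L(A,\tfrac ns)\right)(\rn)$ described in Remark~\ref{rem-june}. Although that remark was stated for the specific function $\widehat A$, the underlying result \cite[Proposition~2.1]{cianchi-ibero} applies to any Young function appearing in an intersection of this form. Thus, for any Young function $A$, the norm \eqref{normsum} on $\left(L^\infty \cap L(A,\tfrac ns)\right)(\rn)$ is equivalent to
\begin{equation*}
\|u^*(r)\,\phi(r)\|_{L^{E_A}(0,\infty)},
\end{equation*}
where $\phi$ is the cutoff given by \eqref{1.5} and $E_A$ is any Young function such that $E_A \simeq A$ near $0$ and $E_A(t) = \infty$ near infinity, as in \eqref{1.4}. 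The same applies with $A$ replaced by $B$, yielding an analogous equivalent norm with some Young function $E_B$.

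Next I would observe that hypothesis \eqref{1.2} allows $E_A$ and $E_B$ to be chosen as the \emph{same} Young function (or, equivalently, two Young functions that are globally equivalent). Indeed, since $A \simeq B$ near $0$, a single Young function $E$ satisfying $E \simeq A \simeq B$ near $0$ and $E(t) = \infty$ near infinity serves as an admissible choice for both $E_A$ and $E_B$ in \eqref{1.4}. Consequently, the two Orlicz norms $\|\,\cdot\,\|_{L^{E_A}(0,\infty)}$ and $\|\,\cdot\,\|_{L^{E_B}(0,\infty)}$ coincide up to multiplicative constants by the standard equivalence \eqref{normineq}.

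Combining these two observations, the norms on the two intersection spaces in \eqref{1.3} are each equivalent to the common expression $\|u^*(r)\phi(r)\|_{L^{E}(0,\infty)}$, and therefore equivalent to each other. This yields \eqref{1.3} as sets, with equivalent norms.

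There is no genuine obstacle here: the real content of the lemma is conceptual rather than technical, namely the fact that the presence of the $L^\infty$ factor kills all information about the behaviour of the Young function near infinity, so that only the behaviour near $0$ matters. The one point to verify carefully is that the equivalent-norm statement quoted from Remark~\ref{rem-june} is genuinely valid for arbitrary Young functions (as opposed to only for the specific $\widehat A$ produced by the construction \eqref{E:1}--\eqref{E:2}); this is the content of \cite[Proposition~2.1]{cianchi-ibero}, which is what allows the argument to close.
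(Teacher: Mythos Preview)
Your proposal is correct and follows essentially the same route as the paper's own proof: both arguments reduce the two intersection norms to the weighted Orlicz expression $\|u^*(r)\phi(r)\|_{L^{E}(0,\infty)}$ via \cite[Proposition~2.1]{cianchi-ibero}, and then observe that the global equivalence $E_A\simeq E_B$ follows from $A\simeq B$ near zero. The only detail the paper makes explicit that you leave implicit is that condition~\eqref{indisp} transfers from $A$ to $B$ under assumption~\eqref{1.2}, which is needed to invoke the norm equivalence for $B$ as well.
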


\begin{proof} Owing to assumptions \eqref{indisp}  and \eqref{1.2}, we also have that
$$ \int_0\left( \frac t{B(t)} \right)^{\frac s{n-s}} \; dt < \infty.$$
Define the Young functions $E_A$ and $E_B$ in such a way that
$$
E_A(t) \simeq
\begin{cases}
 A(t) & \hbox{near $0$}
 \\
 \infty & \hbox{near infinity}
 \end{cases}
 \qquad\qquad\qquad
 E_B(t)\simeq
 \begin{cases}
 B(t) & \hbox{near $0$}
 \\
 \infty & \hbox{near infinity,}
 \end{cases}
$$
and the function
$ \phi : (0, \infty) \to [0, \infty)$ as in \eqref{1.5}.
%\begin{equation}\label{1.5}
%\xi (r)= \min \Big\{1, r^{-\frac sn}\Big\}\qquad \hbox{for $r>0$}\,.
%\end{equation}
By the equivalence of the norms \eqref{normsum} and \eqref{normequiv},%\cite[Proposition  2.1]{cianchi-ibero},
\begin{equation}\label{july100}
    \|u\|_{\big (L^\infty \cap  L(A, \frac ns)\big)(\rn)}\approx \|u^* (r)\, \phi (r)\|_{L^{E_A} (0, \infty)}\,,
\end{equation}
and
\begin{equation}\label{july101}
    \|u\|_{\big (L^\infty  \cap  L(B, \frac ns)\big)(\rn)}\approx \|u^* (r)\, \phi (r)\|_{L^{E_B} (0, \infty)},
\end{equation}
with equivalence constants independent of $u\in \M(0,\infty)$.
Owing to assumption \eqref{1.2}, the Young functions $E_A$ and $E_B$ are globally equivalent. Thus, the norms on the right-hand sides of equations \eqref{july100} and \eqref{july101} are equivalent. Hence,
\begin{equation}\label{1.7}
    \|u\|_{\big ( L^\infty \cap L(A, \frac ns)  \big)(\rn)}\approx
 \|u\|_{\big ( L^\infty \cap L(B, \frac ns) \big)(\rn)}\,,
\end{equation}
as well,
with equivalence constants independent of $u\in \M(0,\infty)$. Equation \eqref{1.3} follows.
 \end{proof}

\begin{proof}[Proof of Theorem \ref{teo1}]
Let $B$ be any Young function  satisfying  properties \eqref{2} and \eqref{3}.
 By embedding \eqref{may22},
\begin{equation}\label{4.21}
V_d^{s,A} (\rn)\to V_d^{s,B} (\rn) \to L(\widehat{B}, \tfrac ns )(\rn) \,,
\end{equation}
where $\widehat{B}$  is the Young function defined as in \eqref{E:1} --\eqref{E:2}, with $A, a, \widehat{a}$ replaced by $B, b, \widehat{b}$, respectively.
\\
From embeddings \eqref{linf} and \eqref{4.21} one infers that
\begin{equation}\label{4.22}
V_d^{s,A} (\rn)\to \big(L^\infty  \cap L(\widehat{B}, \tfrac ns )\big)(\rn) \,.
\end{equation}
Since $B \simeq A$ near zero, one can verify that
\begin{equation}\label{4.23}
\widehat{B} \simeq  \widehat{A} \quad \hbox{ near zero}\,.
\end{equation}
Hence, Lemma \ref{lemma3}, applied with $A$ and $B$ replaced with $\widehat{A}$ and $\widehat{B}$, tells us that
\begin{equation}\label{4.24}
\big(L^\infty  \cap L(\widehat{B}, \tfrac ns )\big)(\rn) = \big(L^\infty \cap L(\widehat{A}, \tfrac ns )\big)(\rn),
\end{equation}
up to equivalent norms.
Embedding \eqref{2.9} follows from equations \eqref{4.22} and \eqref{4.24}.
\\
As far as the optimality of the space
$\big (L^\infty  \cap L(\widehat{A}, \tfrac ns )\big)(\rn)$ is  concerned, assume that embedding \eqref{general} holds for some
  rearrangement-invariant space $Y(\rn)$.
% such that
%\begin{equation}\label{4.25}
%V_d^{s,A}(\rn) \to Y(\rn)\,.
%\end{equation}
We have to show that
\begin{equation}\label{4.26}
\big(L^\infty  \cap L(\widehat{A}, \tfrac ns )\big)(\rn) \to Y(\rn)\,.
\end{equation}
By property \eqref{rednec}, embedding \eqref{general} implies that there exists a constant $c$ such that
\begin{equation}\label{4.28}
\bigg \| \int_r^\infty f(\rho) \, \rho ^{-1 + \frac sn} \; d\rho \bigg\|_{Y(0, \infty)} \leq c \, \|f\|_{L^A (0, \infty)}
\end{equation}
for every function $f\in L^A(0, \infty)$. Inequality \eqref{4.28} entails that
\begin{equation}\label{4.27}
\big( L^\infty  \cap L(\widehat{A}, \tfrac ns )\big)(\rn)\subset Y(\rn)\,.
\end{equation}
This can be verified
via the same argument as in the proof of optimality in \cite[Theorem 1.1]{cianchi-ibero}.
Thanks to
 property \eqref{inclusion-embedding}, inclusion \eqref{4.27} is equivalent to  embedding \eqref{4.26}.
\end{proof}

\begin{proof}[Proof of Theorem \ref{teo2}] Let $B$ be a Young function as in the  statement of Lemma \ref{lemma1}. Embedding \eqref{4.30}, with $A$ replaced by $B$, and embedding
 \eqref{4.22} implies  that
\begin{equation}\label{4.31}
V_d^{s,A} (\rn)\to\big( L^\infty  \cap L^{B_{\frac ns}}\big)(\rn) \,.
\end{equation}
Since $B \simeq A$ near zero,
\begin{equation}\label{4.32}
B_{\frac ns} \simeq A_{\frac ns} \quad \hbox{ near zero}\,.
\end{equation}
Hence, via an application of \cite[Lemma 5.1]{ACPS_entire} we deduce  that
\begin{equation}\label{4.33}
\big( L^\infty  \cap L^{B_{\frac ns}}\big)(\rn) \to L^{A_{\frac ns}}(\rn) \,.
\end{equation}
Embedding \eqref{2.10} follows from \eqref{4.31} and \eqref{4.33}.
\\  It remains to prove that   $L^{A_{\frac ns}}(\rn)$ is the optimal   Orlicz target space in \eqref{2.10}.
Assume that $E$ is a Young function such that
\begin{equation}\label{4.34}
V_d^{s,A} (\rn)\to L^E (\rn)\,.
\end{equation}
We have to show that
\begin{equation}\label{june100}
L^{A_{\frac ns}}(\rn)\to L^E (\rn).
\end{equation}
Thanks to property \eqref{rednec}, there exists a constant $c$ such that
\begin{equation}\label{4.35}
\bigg \| \int_r^\infty f(\rho) \, \rho ^{-1 + \frac sn} \; d\rho \bigg\|_{L^E(0, \infty)} \leq c \, \|f\|_{L^A (0, \infty)}
\end{equation}
for every function $f\in L^A(0, \infty)$.
By \cite[Lemma 1]{cianchi-ibero}, $L^{A_{\frac ns}}(0, \infty)$ is the optimal Orlicz target space in \eqref{4.35}. Thus,
$
L^{A_{\frac ns}}(0, \infty)\to L^E (0, \infty)$, and this embedding is equivalent to embedding \eqref{june100}.
\end{proof}

%
%\begin{lemma}[\bf Reduction Principle]\label{T:reduction_principle}%{\rm{\bf [Higher-order reduction principle]}}
%Let $n\in \N$ and  $s \in (0,n)\setminus
%\N$.  Let $A$ be a Young function and let $Y(\rn)$ be a rearrangement-invariant space. The following statements are equivalent:
%
%\begin{itemize}
%
%\item[i)] {
%There exists a constant $C$ such that
%\begin{equation}\label{E:i}
%\|u\|_{Y(\rn)} \leq C  \big|\nabla ^{[s]}u\big|_{\{s\},A, \R^n}
%\end{equation}
%for every function $u \in V^{s,A}_d(\rn)$.
%}
%
%
%\item[ii)] {
%There exists a constant $C'$ such that
%\begin{equation}\label{E:ii}
%	 \bigg\|\int_{r}^{\infty}f(\varrho)\varrho^{-1+\frac{s}{n}}\,d\varrho \bigg\|_{\overline{Y}(0, \infty)}
%		\leq C'\|f\|_{L^A(0, \infty)}
%\end{equation}
% for every  function $f\in L^A(0,\infty)$.
% }
% \end{itemize}
%\end{lemma}

\medskip
\begin{proof}[Proof of Theorem \ref{T:reduction_principle}] The fact that inequality \eqref{E:i} implies inequality \eqref{E:ii} is stated in property \eqref{rednec}, and established  in  \cite[Lemmas 6.5 and 7.6]{ACPS_frac}.
\\
Let us prove the reverse implication. Assume that inequality \eqref{E:ii} holds for some rearrangement-invariant space $Y(\rn)$
We distinguish two cases, corresponding to the subcritical regime \eqref{subcrit} and the supercritical regime \eqref{supercrit}.
\\
If  condition \eqref{subcrit} is in force, then  inequality  \eqref{may23} holds. Hence, by property  \eqref{rednec},
 \begin{equation}\label{4.38}
\bigg \| \int_r^\infty f(\rho) \, \rho ^{-1 + \frac sn} \; d\rho \bigg\|_{L(\widehat{A}, \frac ns)(0, \infty)} \leq C \|f\|_{L^A(0, \infty)}
\end{equation}
for every $f \in L^A(0, \infty)$. Moreover,
the target space $L(\widehat{A}, \frac ns)(0, \infty)$ is optimal in inequality \eqref{4.38} among all rearrangement-invariant spaces {\color{black}--} see the proof of the optimality in \cite[Theorem 1.1, Part I]{cianchi-ibero}.
% Assume that $\overline{Y}(0, \infty)$ is such that
%\begin{equation}\label{4.40}
%\bigg \| \int_r^\infty f(\varrho) \, \varrho ^{-1 + \frac sn} \; d\varrho \bigg\|_{\overline{Y}(0, \infty)} \leq C \, \|f\|_{L^A (0, \infty)}
%\end{equation}
%for every function $f\in L^A(0, \infty)$.
%Inasmuch as inequality \eqref{E:ii} is nothing but \eqref{4.38}, with the space $L(\widehat{A}, \frac ns)(0, \infty)$ replaced by $\overline Y(0, \infty)$,
This optimality
 ensures that $L(\widehat{A}, \frac ns)(0, \infty) \to \overline Y(0, \infty)$. Hence,
\begin{equation}\label{4.41}
 \|u\|_{Y(\rn)} \leq c \|u\|_{L(\widehat{A}, \frac ns)(\rn)}
\end{equation}
for some constant $c$ and
for every $u \in L(\widehat{A}, \frac ns)(\rn)$.
Inequality \eqref{E:i} follows  from \eqref{may23}  and \eqref{4.41}.
%
%there exists a constant $c$  such that
% \begin{equation}\label{4.42}
%\|u\|_{Y(\rn)}\leq c \big|\nabla ^{[s]}u\big|_{\{s\},A, \R^n}
%\end{equation}
%for every $u \in V_d^{s,A}(\rn)$. This proves inequality \eqref{E:i}.
%Moreover,
%\begin{equation}
% V_d^{s,A}(\rn) \to Y(\rn)\,.
%\end{equation}
\\
Assume next that
 condition \eqref{supercrit} holds. Then, Theorem \ref{teo1}  provides us with inequality \eqref{2.9norm}, which, coupled with
%By \cite[Theorem 6.2]{ACPS_frac},
% \begin{equation}\label{4.43}
% \|u\|_{\left(L(\widehat{A}, \frac ns)\cap L^\infty\right)(\rn)} \leq C\, \big|\nabla ^{[s]}u\big|_{\{s\},A, \R^n}
%\end{equation}
% for every function $u\in V_d^{s,A}(\rn)$.
 property \eqref{rednec},  yields the inequality
 \begin{equation}\label{4.44}
\bigg \| \int_r^\infty f(\rho) \, \rho ^{-1 + \frac sn} \; d\rho \bigg\|_{\left(L^\infty \cap L(\widehat{A}, \frac ns)\right)(0, \infty)} \leq  c\, \|f\|_{L^A(0, \infty)}
\end{equation}
for some constant $c$ and for every $f \in L^A(0, \infty)$.
 The space $\big(L^\infty \cap L(\widehat{A}, \frac ns)\big)(0, \infty)$ is optimal in inequality  \eqref{4.44} among all rearrangement-invariant spaces {\color{black}--} see the  proof of the optimality in \cite[Theorem 1.1, Part II]{cianchi-ibero}.
This optimality
 guarantees that $\big(L^\infty \cap L(\widehat{A}, \frac ns)\big)(0, \infty) \to \overline Y(0, \infty)$, whence
\begin{equation}\label{4.41bis}
 \|u\|_{Y(\rn)} \leq  c \|u\|_{\left( L^\infty \cap L(\widehat{A}, \frac ns)\right)(\rn)}
\end{equation}
for some constant $c$ and for every $u \in  \big(L^\infty \cap L(\widehat{A}, \frac ns)\big)(\rn)$. Inequality \eqref{E:i}  is a consequence of
 \eqref{2.9norm} and \eqref{4.41bis}.
%   we infer that there exists a constant $c$   such that
%$$
%\|u\|_{Y(\rn)}\leq c\big|\nabla ^{[s]}u\big|_{\{s\},A, \R^n}
%$$
%for every $u \in V_d^{s,A}(\rn)$. Hence, inequality \eqref{E:i}  follows.
\end{proof}

\section*{Compliance with Ethical Standards}\label{conflicts}

\smallskip
\par\noindent
{\bf Funding}. This research was partly funded by:
\\ (i) GNAMPA   of the Italian INdAM - National Institute of High Mathematics (grant number not available)  (A. Alberico, A. Cianchi);
\\ (ii) Research Project   of the Italian Ministry of Education, University and
Research (MIUR) Prin 2017 ``Direct and inverse problems for partial differential equations: theoretical aspects and applications'',
grant number 201758MTR2 (A. Cianchi);
\\  (iii)  Grant P201/21-01976S of the Czech Science Foundation  (L. Pick);
\\ (iv) Primus research programme PRIMUS/21/SCI/002 of Charles University (L. Slav\'ikov\'a).

\bigskip
\par\noindent
{\bf Conflict of Interest}. The authors declare that they have no conflict of interest.

%\todo[inline]{
%The Riesz-Fischer Property:
%
%$V$ is a Banach space if and only if it follows from $\sum_{n=1} ^\infty \|f_n\|<\infty$ (all $f_n$  in $V$) that the partial sums $s_n=\sum_1^n f_k$ have a norm limit in $V$ (as $n \to \infty$).}

%%%%%%%%%%%%%%%%%%%%%%% Bibliography %%%%%%%%%%%%%%%%%%%%%%%%

\bigskip

%\todo[inline]{Angela: I have added some references chosen by Andrea from the .doc file that I sent  you in the last mail.
%They are coloured magenta.}

%\todo[inline]{Lenka: I have added the paper by Gossez that we are using.}

\end{document}